\newtheorem{thm}{Theorem}[section]
\newtheorem{prop}[thm]{Proposition}
\newtheorem{cor}[thm]{Corollary}
\newtheorem{lem}[thm]{Lemma}
\newtheorem{rem}[thm]{Remark}
\newtheorem{dfn}[thm]{Definition}
\newtheorem{exa}[thm]{Example}
\newenvironment{proof}[1][\proofname]{\par
  \normalfont
  \topsep6\p@\@plus6\p@ \trivlist
  \item[\hskip\labelsep{\textit{\mdseries #1}\@addpunct{\mdseries.}}]\ignorespaces
}{%
  \QED \endtrivlist
}
\newcommand{\proofname}{\normalfont{\textit{Proof.}}}
\def\BOXSYMBOL{\RIfM@\bgroup\else$\bgroup\aftergroup$\fi
  \vcenter{\hrule\hbox{\vrule height.85em\kern.6em\vrule}\hrule}\egroup}
\newcommand{\BOX}{%
  \ifmmode\else\leavevmode\unskip\penalty9999\hbox{}\nobreak\hfill\fi
  \quad\hbox{\BOXSYMBOL}}
\newcommand\QED{\BOX}
\numberwithin{equation}{section}
\newtheorem{thm-dfn}[thm]{Theorem-Definition}
\newtheorem{thmalph}{Theorem}[section]
\newtheorem{coralph}[thmalph]{Corollary}
\def\Im{\mathop{\mathrm{Im}}\nolimits}
\def\Ker{\mathop{\mathrm{Ker}}\nolimits}
\def\Hom{\mathop{\mathrm{Hom}}\nolimits}
\def\End{\mathop{\mathrm{End}}\nolimits}
\def\dom{\mathop{\mathrm{dom}}\nolimits}
\def\supp{\mathop{\mathrm{supp}}\nolimits}
\def\ind{\mathop{\mathrm{ind}}\nolimits}
\def\id{\mathord{\mathrm{id}}}
\def\calq{\mathop{\mathcal{Q}}\limits}
\newcommand{\bb}[1]{\mathbb{#1}}
\newcommand{\C}{\mathbb{C}}
\newcommand{\R}{\mathbb{R}}
\newcommand{\Z}{\mathbb{Z}}
\newcommand{\N}{\mathbb{N}}
\newcommand{\ep}{\varepsilon}
\newcommand{\dif}{\mathrm{d}}
\newcommand{\relmiddle}[1]{\mathrel{}\middle#1\mathrel{}}
\newcommand{\map}[3]{ #1 \colon #2 \to #3 }
\newcommand{\brackets}[1]{ \left[ #1 \right]}
\newcommand{\braces}[1]{ \left\{ #1 \right\}}
\newcommand{\parens}[1]{ \left( #1 \right)}
\newcommand{\angles}[1]{ \left \langle #1 \right \rangle}
\newcommand{\norm}[1]{ \left \Vert #1 \right \Vert}
\newcommand{\set}[2]{ \left \{ #1 \relmiddle| #2 \right \}}
\newcommand{\half}{\frac{1}{2}}
\newcommand{\E}{\mathcal{E}}
\newcommand{\Max}{\mathrm{Max}}
\newcommand{\red}{\mathrm{red}}
\newcommand{\qqand}{\qquad \text{and}\qquad}
\newcommand{\qqfor}{\qquad \text{for}\quad}
\newcommand{\hattensor}{\widehat{\otimes}}
\newcommand{\turnV}[1]{\bigwedge\nolimits^{\! #1}}
\newcommand{\gm}{\gamma}
\newcommand{\gminv}{\gamma_{}^{-1}}
\newcommand{\EG}{\mathcal{E}G}
\newcommand{\settingsGXa}
{Let $G$ be a second countable locally compact Hausdorff group.
Let $X$ be a $G$-compact 
proper complete $G$-Riemannian manifold. }
\newcommand{\dirsum}[1]{\bigoplus #1}
\newcommand{\bmatrixtwo}[4]{
\begin{bmatrix} #1 & #2 \\ #3 & #4 \end{bmatrix} }
\newcommand{\smatrixtwo}[4]{
\begin{smallmatrix} #1 & #2 \\ #3 & #4 \end{smallmatrix} }
\title{$G$-Homotopy Invariance of the Analytic Signature of Proper Co-compact $G$-manifolds and Equivariant Novikov Conjecture}
\author{Yoshiyasu Fukumoto \footnote
{
Research Center for Operator Algebras, Department of Mathematics, 
East China Normal University
\newline \, \, \,
3663 North ZhongShan Road, Shanghai, CHINA. 200062
\newline \, \, \,
fukumoto@math.ecnu.edu.cn}
}
\date{Research Center for Operator Algebras, Department of Mathematics,
\\
East China Normal University}
\begin{document}
\maketitle
\vspace{-1ex}
\begin{abstract}
The main result of this paper is the $G$-homotopy invariance of the $G$-index of 
signature operator of proper co-compact $G$-manifolds.
If proper co-compact $G$ manifolds $X$ and $Y$ are $G$-homotopy equivalent,
then we prove that the images of their signature operators by the $G$-index map are the same
in the $K$-theory of the $C^{*}$-algebra of the group $G$.
Neither discreteness of the locally compact group $G$ nor freeness of the action 
of $G$ on $X$ are required,
so this is a generalization of the classical case of closed manifolds.
Using this result we can deduce the equivariant version of Novikov conjecture for proper co-compact 
$G$-manifolds from the Strong Novikov conjecture for $G$. 
\end{abstract}
\textit{Mathematics Subject Classification (2010).}
19K35, 
19K56, 
46L80, 
58A12
\newline
\textit{Keywords.} 
Novikov conjecture, 
Higher signatures,
Almost flat bundles
\tableofcontents
\section*{Introduction}
Before discussing on our case of proper $G$-action, let us review the 
classical case of closed manifolds.
For even dimensional oriented closed manifold $M$, 
The ordinary Fredholm index of 
the signature operator $\partial_{M}^{}$ is equal to the signature of the manifold $M$
which is defined using the cup product of the ordinary cohomology of $M$.
In particular it follows that $\mathrm{ind}(\partial_{M}^{})$ is invariant
under orientation preserving homotopy.
We have the following classical and important result;
\begin{thm}{\upshape \cite{Kas75}, \cite{KamMil85}, \cite{HilSk92}}
\label{ThmClassical}
Let $M$ and $N$ be even dimensional oriented closed manifolds with fundamental group
$\Gamma = \pi_{1}^{}(M) = \pi_{1}^{}(N)$.
Assume that $M$ and $N$ are orientation preserving homotopy equivalent to each other.
Then $\mathrm{ind}_{\Gamma}^{}(\partial_{M}^{}) = \mathrm{ind}_{\Gamma}^{}(\partial_{N}^{}) \in K_{0}^{}(\Gamma)$.
\end{thm}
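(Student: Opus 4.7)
\emph{Proof plan.}
The plan is to pass to the universal covers $\widetilde{M}$ and $\widetilde{N}$, on which $\Gamma$ acts freely, properly, and cocompactly, and to realize $\mathrm{ind}_{\Gamma}^{}(\partial_{M}^{})$ and $\mathrm{ind}_{\Gamma}^{}(\partial_{N}^{})$ as Mishchenko--Fomenko indices of the signature operators twisted by the canonical flat bundles of right $C^{*}\Gamma$-modules $\widetilde{M} \times_{\Gamma} C^{*}\Gamma \to M$ and $\widetilde{N} \times_{\Gamma} C^{*}\Gamma \to N$. This places both invariants inside $K_{0}(C^{*}\Gamma) = K_{0}(\Gamma)$ and reduces the theorem to showing that a $\Gamma$-equivariant lift $\widetilde{f}\colon \widetilde{M} \to \widetilde{N}$ of the homotopy equivalence $f$ transports one signature index to the other.

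Following the strategy of Hilsum--Skandalis, I would construct from $\widetilde{f}$ a bounded adjointable $\Gamma$-equivariant operator $T$ between the Mishchenko--Fomenko Hilbert $C^{*}\Gamma$-module completions of the de Rham complexes of $\widetilde{N}$ and $\widetilde{M}$, with the properties that (i) $T$ intertwines the two signature operators modulo $C^{*}\Gamma$-compacts, and (ii) both $T^{*}T - 1$ and $TT^{*} - 1$ are $C^{*}\Gamma$-compact. Once such a $T$ is in hand, functoriality of the Mishchenko--Fomenko index under adjointable quasi-isomorphisms of Hilbert modules forces $\mathrm{ind}_{\Gamma}^{}(\partial_{M}^{}) = \mathrm{ind}_{\Gamma}^{}(\partial_{N}^{})$ in $K_{0}(\Gamma)$.

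The main obstacle is the construction of $T$. The naive pullback of forms along $\widetilde{f}$ is not $L^{2}$-bounded, and $\widetilde{f}$ is only continuous a priori. I would first replace $\widetilde{f}$ by a smooth $\Gamma$-equivariant map equivariantly homotopic to it, then build $T$ as the composition of its pullback with a carefully chosen $L^{2}$-bounded smoothing operator built from the Poincar\'e duality pairing, arranged so that $T$ becomes a chain map respecting the symmetric Poincar\'e structure up to chain homotopy. Verifying adjointability over $C^{*}\Gamma$, the compactness of $T^{*}T - 1$ and $TT^{*} - 1$, and the compactness of the commutator $[T,\partial]$ forms the technical heart of the argument; orientation preservation of $f$ is what ensures compatibility with the $\mathbb{Z}/2$-grading defining the signature class.
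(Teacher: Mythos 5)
Your broad strategy --- pass to universal covers, build an operator $T$ out of the homotopy equivalence, apply a Hilsum--Skandalis argument --- has the right shape, but the conditions you impose on $T$, namely that $T$ intertwine the signature operators modulo $C^{*}\Gamma$-compacts and that $T^{*}T-1$ and $TT^{*}-1$ be $C^{*}\Gamma$-compact, are not achievable from a homotopy equivalence and are not what the Hilsum--Skandalis machinery uses. What a homotopy equivalence actually produces (after fixing up $L^{2}$-boundedness via the disk-bundle/Thom-class construction, as in Section 3 of the paper) is a chain map $T$ with $T\delta_{X}=\delta_{Y}T$ inducing an isomorphism on cohomology, together with a chain homotopy $\phi$ with $1-T_{}'T=\delta\phi+\phi\delta$, where $T_{}'$ is the adjoint with respect to the \emph{Poincar\'e-duality quadratic form} $Q$, not the Hilbert-module scalar product. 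The operator $T^{*}T-1$ is not compact in general, and $T$ does not intertwine $\delta^{*}$ (hence not $\partial=\delta+\delta^{*}$) in any controlled way; ``a chain map respecting the symmetric Poincar\'e structure up to chain homotopy'' does not promote to a compact perturbation of a unitary. The content of Lemma~\ref{LemHilSkPerturb}, which the paper uses to prove the more general Theorem~\ref{MainThm} of which the present statement is the special case $X=\widetilde M$, $Y=\widetilde N$, $G=\Gamma$, is precisely to extract equality of indices from this much weaker data: one needs in addition a parity operator $\ep$ with $\ep^{2}=1$, $\ep_{}'=\ep$, $\ep\delta_{X}=-\delta_{X}\ep$ and $\ep(1-T_{}'T)=(1-T_{}'T)\ep$, and then a genuine deformation of quadratic forms $B_{t},C_{t}$ and differentials $\nabla_{t}$ on $\E_{X}\oplus\E_{Y}$, ending in a complex with $\Ker(\nabla_{t})=\Im(\nabla_{t})$. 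Your appeal to ``functoriality of the index under adjointable quasi-isomorphisms'' substitutes a compact-perturbation principle that does not apply here, so the step ``once such a $T$ is in hand'' is where the proof breaks.

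A secondary remark on framing: the paper does not descend to $M$ and twist the signature operator by the Mishchenko bundle; it works directly on the $\Gamma$-space $\widetilde M$ and builds a Hilbert $C^{*}\Gamma$-module $\E$ out of compactly supported forms by $\Gamma$-averaging, \`a la Kasparov. For a free cocompact action the two pictures agree, so your Mishchenko--Fomenko framing is legitimate for the classical statement; the paper's choice is what lets the same argument run verbatim for non-free proper actions of non-discrete $G$, which is the actual point of Theorem~\ref{MainThm}. Either way, the deformation lemma is unavoidable.
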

Notice that we can deduce the Novikov conjecture from the Strong Novikov conjecture
by using this theorem.
Moreover, we also have a more generalized result;
\begin{thm}{\upshape \cite[3.3. PROPOSITION and 3.6. THEOREM]{RoWe90}}
Let a finite group $\Gamma$ acts on $M$ and $N$ and 
let $\Gamma = \pi_{1}^{}(M) = \pi_{1}^{}(N)$.
Let $\ind_{\Gamma}^{G} $ be the $G$-equivariant $\Gamma$-index map
with value in $K_{0}^{G}(C_{\red}^{*}(\Gamma)) \simeq K_{0}^{}(C_{\red}^{*}(G_{}^{\Gamma}))$,
where $G_{}^{\Gamma}$ denotes the group extension 
$\{ 1 \} \to \Gamma \to G_{}^{\Gamma} \to G \to \{ 1 \}$.
Assume that $M$ and $N$ are orientation preserving $\Gamma$-equivariantly homotopy equivalent.
Then $\ind_{\Gamma}^{G} (\partial_{M}^{}) 
= \ind_{\Gamma}^{G} (\partial_{N}^{}) \in K_{0}^{}(C_{\red}^{*}(G_{}^{\Gamma}))$.
\end{thm}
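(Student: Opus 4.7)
My plan is to reduce this statement to the discrete-group case of Theorem~\ref{ThmClassical} applied to the extension $G^{\Gamma}$, by passing to universal covers. Since $G$ acts on $M$ and $N$ compatibly with the identification $\pi_{1}(M) = \pi_{1}(N) = \Gamma$, the $G$-actions lift to actions of the extension $G^{\Gamma}$ on the universal covers $\widetilde{M}$ and $\widetilde{N}$, and the hypothesised orientation preserving $G$-equivariant homotopy equivalence $f \colon M \to N$ lifts to a $G^{\Gamma}$-equivariant orientation preserving homotopy equivalence $\widetilde{f} \colon \widetilde{M} \to \widetilde{N}$. The group $G^{\Gamma}$ is discrete and acts properly and cocompactly on both universal covers, and under the isomorphism $K_{0}^{G}(C_{\red}^{*}(\Gamma)) \simeq K_{0}(C_{\red}^{*}(G^{\Gamma}))$ the index $\ind_{\Gamma}^{G}(\partial_{M})$ becomes the ordinary $G^{\Gamma}$-index of the signature operator on $\widetilde{M}$, and likewise for~$N$.

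Next I would realise both $G^{\Gamma}$-indices as signature classes of $G^{\Gamma}$-equivariant Hilbert--Poincar\'e complexes over $C_{\red}^{*}(G^{\Gamma})$, built from the $L^{2}$-de~Rham complex on each universal cover twisted by the Mishchenko bundle. With this language in place, the claim reduces to the purely analytic core of Hilsum--Skandalis: a bounded $G^{\Gamma}$-equivariant chain homotopy equivalence between two such Hilbert--Poincar\'e complexes forces their signature operators to represent the same class in $K_{0}(C_{\red}^{*}(G^{\Gamma}))$. The proof of this algebraic statement (via an explicit operator homotopy through a mapping cone, for instance) transports essentially verbatim from the closed-manifold setting to the properly discontinuous one, because the fact that $G^{\Gamma}$ no longer acts freely on $\widetilde{M}$ never enters the functional-analytic argument---only properness and cocompactness do.

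The main obstacle, exactly as in the classical case, is producing the bounded $G^{\Gamma}$-equivariant chain equivalence of Hilbert--Poincar\'e complexes out of $\widetilde{f}$ itself: naive pullback of forms along a merely continuous or Lipschitz $\widetilde{f}$ is not $L^{2}$-bounded, and does not intertwine the Poincar\'e pairings on the nose. The remedy I would implement is to replace $\widetilde{f}$ by a smoothed and truncated version, using a $\Gamma$-invariant partition of unity on $\widetilde{M}$ averaged over the finite group $G$, and then to correct the resulting chain map by explicit lower-order homotopies so that it intertwines the Poincar\'e duality pairings up to bounded chain homotopy. The averaging over $G$ is essentially cost-free because $G$ is finite, so the only genuinely new analytic work beyond Theorem~\ref{ThmClassical} is checking that the Hilsum--Skandalis estimates survive when the $G$-fixed point set on $\widetilde{M}$ is nontrivial; this is routine because the estimates are local on $\widetilde{M}$.
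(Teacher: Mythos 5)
The paper does not actually prove this theorem; it is cited from Rosenberg--Weinberger as one of the classical precedents of Theorem~\ref{MainThm}, and the paper's contribution is to prove the generalization (Theorem~\ref{MainThm}) directly. Your first two paragraphs are essentially a correct reduction of the Rosenberg--Weinberger statement to the setting of Theorem~\ref{MainThm}: lift to universal covers, observe that the extension group $G^{\Gamma}$ acts properly and cocompactly but no longer freely, lift the equivariant homotopy equivalence, and observe that the Hilsum--Skandalis machinery never uses freeness. That is exactly the strategy the paper pursues for Theorem~\ref{MainThm} (via Lemma~\ref{LemHilSkPerturb} and its inputs), and your isolation of ``only properness and cocompactness enter'' is the right key observation.

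The third paragraph, however, has a real gap. You correctly identify the obstacle---naive pullback $\widetilde{f}^{\,*}$ is not $L^{2}$-bounded---but the remedy you propose (replace $\widetilde{f}$ by a ``smoothed and truncated'' version via an averaged partition of unity, then correct by lower-order homotopies) does not address the actual source of the unboundedness. The paper's own one-dimensional example $f(y)=y^{3}$ on $[-1,1]$ already involves a smooth $f$; what fails is submergence, so the Jacobian degenerates and pullback concentrates $L^{2}$-mass. Smoothing or mollifying $\widetilde{f}$ changes nothing here. The construction the paper actually uses (following Hilsum--Skandalis) is qualitatively different: one discards $f^{*}$ entirely and replaces it by $T_{p,\omega}=q_{I}^{}\,e_{\omega}^{}\,p^{*}$, where $p(y,v)=\exp_{f(y)}^{}(v)$ is a genuine $G$-equivariant submersion from the pulled-back disk tangent bundle $W=f^{*}BX$ to $X$, $\omega$ is a $G$-invariant closed form with fiberwise integral $1$, and $q_{I}^{}$ integrates along the fiber of $q\colon W\twoheadrightarrow Y$. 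Submergence of $p$ gives $L^{2}$-boundedness for free (Lemma~\ref{LemEboundedness}), and the chain homotopy $1-T'T=\delta\phi+\phi\delta$ required by Lemma~\ref{LemHilSkPerturb} is produced systematically by Proposition~\ref{KeyPropChainHomotopy} rather than by ad hoc corrections. In your sketch, the phrase ``correct the resulting chain map by explicit lower-order homotopies'' is exactly where the hard content lives, and without the disk-bundle/submersion device there is no clear route to either the boundedness or the required homotopies; this step therefore needs to be replaced, not merely filled in.
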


Our main theorem is a generalization of them. 
Let us fix the settings.
Let $X$ and $Y$ be oriented even-dimensional complete Riemnannian manifolds
and let $G$ be a second countable locally compact Hausdorff group
acting on $X$ and $Y$ isometrically, properly and co-compactly.
\begin{thmalph}
\label{MainThm}
Let $X$ and $Y$ be oriented even-dimensional complete Riemnannian manifolds
and let $G$ be a second countable locally compact Hausdorff group
acting on $X$ and $Y$ isometrically, properly and co-compactly.
Let $\partial_{X}^{}$ and $\partial_{Y}^{}$ be the signature operators.
Assume the we have 
a $G$-equivariant orientation preserving homotopy equivalent map $\map{f} {Y} {X}$.

Then $\mathrm{ind}_{G}^{} (\partial_{X}^{}) = \mathrm{ind}_{G}^{} (\partial_{Y}^{} ) \in K_{0}^{}(C^{*}(G))$.
\end{thmalph}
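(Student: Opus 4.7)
The plan is to adapt the Hilsum--Skandalis analytic proof of homotopy invariance \cite{HilSk92} to the setting of regular operators on Hilbert $C^{*}(G)$-modules arising from proper co-compact actions. The first task is to realize $\mathrm{ind}_{G}(\partial_{X})$ as the index of a $C^{*}(G)$-linear Fredholm element. Using a cut-off function $c \in C_{c}(X)$ satisfying $\int_{G} c(g^{-1}x)\,dg = 1$, one forms the Hilbert $C^{*}(G)$-module $\mathcal{E}_{X}$ of $L^{2}$-forms on $X$ with coefficients in $C^{*}(G)$. The signature operator $\partial_{X}$ extends to a self-adjoint regular operator on $\mathcal{E}_{X}$; applying an odd chopping function $\chi$ produces a Kasparov cycle $(\mathcal{E}_{X},\chi(\partial_{X}))$ whose class in $K_{0}(C^{*}(G))$ is $\mathrm{ind}_{G}(\partial_{X})$, and analogously for $Y$.

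After a standard $G$-equivariant smoothing step I may assume $f$ and a homotopy inverse $g$ are smooth. The central construction is a bounded $G$-equivariant adjointable operator $T_{f}\colon \mathcal{E}_{X} \to \mathcal{E}_{Y}$ of normalized pullback type, heuristically $T_{f}\omega = \rho_{f}\, f^{*}\omega$ with $\rho_{f}$ built from the Jacobian of $f$, together with an analogously defined $T_{g}\colon \mathcal{E}_{Y} \to \mathcal{E}_{X}$. The Hilsum--Skandalis machinery then uses the homotopies $h \colon f \circ g \simeq \id_{X}$ and $k \colon g \circ f \simeq \id_{Y}$, combined with transgression (integration in the homotopy parameter $t \in [0,1]$), to produce explicit parametrices. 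Two analytic relations must be established: first, that the commutators $T_{f}\chi(\partial_{X}) - \chi(\partial_{Y}) T_{f}$ and $T_{g}\chi(\partial_{Y}) - \chi(\partial_{X}) T_{g}$ are $C^{*}(G)$-compact; and second, that $T_{g} T_{f} - \id_{\mathcal{E}_{Y}}$ and $T_{f} T_{g} - \id_{\mathcal{E}_{X}}$ are $C^{*}(G)$-compact. Granted these, an operator-homotopy argument in Kasparov theory yields the equality of the two cycles in $K_{0}(C^{*}(G))$.

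The principal difficulty lies in the meaning of compactness in this equivariant setting: on the non-compact manifolds $X$ and $Y$ the classical Rellich lemma does not apply, and compactness must be interpreted as membership in the ideal $\mathcal{K}(\mathcal{E}_{X})$ of $C^{*}(G)$-compact operators. Via averaging against the cut-off function $c$, this ideal may be identified with a completion of $G$-equivariant integral operators whose Schwartz kernels acquire $G$-cocompact support after multiplication by compactly supported functions. Hence the analytic core of the proof reduces to showing that all the error terms arising from the normalized pullback and the homotopy transgressions are pseudolocal of suitable order and $G$-cocompactly supported; this is precisely where co-compactness of the actions on $X$ and $Y$ is essential, since it ensures that the proper maps $f$, $g$ and the homotopies $h$, $k$ between $G$-cocompact spaces yield operators whose supports remain controlled. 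I expect the most delicate step to be verifying compactness of the transgression operators: the homotopies $h$ and $k$ are only continuous in the parameter $t$, so controlling the resulting operators in the Hilbert $C^{*}(G)$-module norm will require a careful approximation argument combined with the equivariant analogue of the standard Hilsum--Skandalis mapping estimates.
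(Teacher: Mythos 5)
The central step in your plan—defining $T_{f}$ as a ``normalized pullback $\rho_{f}\,f^{*}$ built from the Jacobian of $f$''—does not work, and the paper explicitly flags the obstacle. For a general smooth map $f\colon Y\to X$ (even a proper homotopy equivalence), the pullback $f^{*}$ on compactly supported forms is \emph{not} $L^{2}$-bounded: the paper's Remark after Definition 3.6 gives the concrete example $X=Y=[-1,1]$, $f(y)=y^{3}$, $\omega(x)=|x|^{-1/4}\,dx$, where $\|\omega\|_{L^{2}}<\infty$ but $\|f^{*}\omega\|_{L^{2}}=\infty$. Multiplying by a Jacobian density cannot repair this, because $f$ is generically neither a submersion nor an immersion, so its Jacobian may vanish on a large set and there is no pointwise ``normalization'' that produces a bounded operator commuting with $d$ while still recovering $f^{*}$ on cohomology. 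The paper's actual fix is the Hilsum--Skandalis submersion/Thom-form trick: one replaces $f$ by a genuine $G$-equivariant submersion $p\colon W\to X$, where $W=f^{*}BX$ is the pulled-back unit disk tangent bundle over $Y$, $p(y,v)=\exp_{f(y)}(v)$, and one fixes a $G$-invariant fiberwise-compactly-supported closed form $\omega$ with $q_{I}(\omega)=1$. The operator is then $T=q_{I}\,e_{\omega}\,p^{*}$, which is $L^{2}$-bounded precisely because $p$ is a submersion (Lemma 3.8) and extends to an adjointable operator on $\mathcal{E}_{X}$ by the averaging Proposition 3.3. Without this idea your construction has no well-defined bounded $T$ to work with.

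A secondary divergence, which also matters: you propose to conclude via an operator homotopy of Kasparov cycles $(\mathcal{E},\chi(\partial))$, verifying that $T_{f}\chi(\partial_{X})-\chi(\partial_{Y})T_{f}$ and $T_{g}T_{f}-\mathrm{id}$ are $\mathbb{K}$-compact. The paper instead works at the chain level with the unbounded operators $\delta=i^{\bullet}d$ and the regular quadratic forms $Q_{X},Q_{Y}$ given by the intersection pairing, and invokes the algebraic perturbation lemma (Lemma 2.9, after \cite[2.1 Lemme]{HilSk92}). That lemma requires \emph{exact} identities—$T\delta_{X}=\delta_{Y}T$ (true since $\omega$ is closed), $1-T'T=\delta_{X}\phi+\phi\delta_{X}$ for a bounded chain homotopy $\phi$ (Proposition 3.13), and a compatible parity involution $\varepsilon$—not merely compactness of error terms. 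The exactness is what lets one deform the pair $(\mathcal{E}_{X}\oplus\mathcal{E}_{Y},\,\delta_{X}\oplus(-\delta_{Y}))$ to an acyclic complex and deduce vanishing of the index difference via Lemma 2.8. Your ``compact commutator plus operator homotopy'' sketch is a sensible coarse outline, but making it rigorous would essentially force you to re-derive the same exact chain-level identities, at which point the quadratic-form formalism is the natural language. Finally, your worry that the homotopies are only continuous in $t$ is handled in the paper by a smoothing/re-parametrization step before the transgression (Lemma 3.12 and Proposition 3.11); this is routine, whereas the missing submersion trick above is not.
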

This claim is also stated in \cite{Ba-Co-Hi76} without proofs
and here we will give a proof for it to obtain Corollary \ref{MainCor}.
The method we use in this paper is based on \cite{HilSk92},
so we will construct a map that sends $\mathrm{ind}_{G}^{} (\partial_{X}^{})$
to $\mathrm{ind}_{G}^{} (\partial_{Y}^{} )$.
Our group $C^{*}$-algebras can be either maximal one or reduced one.

Theorem \ref{ThmClassical} is the case when $X$ and $Y$ are the universal covering of 
closed manifolds $M$ and $N$.
Thus, analogously to the case of closed manifolds, 
the equivariant version of the Novikov conjecture 
can be deduced from the Strong Novikov conjecture for the acting group $G$.
In particular, by using this theorem and the result discussed in \cite{Fu16},
we obtain the following equivariant version of Novikov conjecture for low dimensional cohomologies:
\begin{coralph}
\label{MainCor}
Let $X$, $Y$ and $G$ as above and let $L$ be a $G$-hermitian line bundles over $X$
which is induced from a $G$-line bundle over $\EG$, or more generally, 
$G$-hermitian line bundle $L$ over $X$ satisfying $c_{1}(L) = 0 \in H_{}^{2}(X; \R)$.
Suppose, in addition, that $G$ is unimodular and $H_{1}^{}(X; \R) = H_{1}^{}(Y; \R) = \{ 0 \}$.
Then,
$$
\int_{X} c_{X}^{}(x) \mathcal{L}(TX)\wedge \mathrm{ch}(L)
=
\int_{Y} c_{Y}^{}(y) \mathcal{L}(TY)\wedge \mathrm{ch}(f_{}^{*}L),
$$
\end{coralph}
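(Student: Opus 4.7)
The plan is to reduce Corollary \ref{MainCor} to Theorem \ref{MainThm} via the trace-theoretic interpretation of low-dimensional equivariant higher signatures developed in \cite{Fu16}. The strategy parallels the classical derivation of the Novikov conjecture in cohomological degree at most two from the Strong Novikov conjecture by means of almost flat bundles.

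First I would apply Theorem \ref{MainThm} directly to obtain
$$\mathrm{ind}_G(\partial_X) = \mathrm{ind}_G(\partial_Y) \in K_0(C^*(G)).$$
Next I would invoke the construction of \cite{Fu16}: under the standing hypotheses that $G$ is unimodular, $H_1(X;\R)=0$, and $L$ is a $G$-hermitian line bundle over $X$ with $c_1(L)=0$ (equivalently induced from a line bundle over $\EG$), one obtains a trace functional $\tau_L \colon K_0(C^*(G)) \to \C$ together with an index formula
$$\tau_L\bigl(\mathrm{ind}_G(\partial_X)\bigr) = \int_X c_X(x)\,\mathcal{L}(TX)\wedge \mathrm{ch}(L).$$
The construction builds $\tau_L$ from an almost flat representative of $L$, which is available precisely because $c_1(L)=0$, and the unimodularity of $G$ is what ensures that the resulting weight on $C^*(G)$ is a genuine trace. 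The same machinery applied on $Y$ to $f_{}^{*}L$, whose first Chern class also vanishes by naturality, yields a trace $\tau_{f_{}^{*}L}$ with the analogous formula
$$\tau_{f_{}^{*}L}\bigl(\mathrm{ind}_G(\partial_Y)\bigr) = \int_Y c_Y(y)\,\mathcal{L}(TY)\wedge \mathrm{ch}(f_{}^{*}L).$$

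The crux is then to verify that $\tau_L$ and $\tau_{f_{}^{*}L}$ coincide as functionals on $K_0(C^*(G))$. Both are induced from the classifying map of the relevant $G$-line bundle into $\EG$, and the $G$-homotopy equivalence $f$ identifies the classifying map of $f_{}^{*}L$ with the composition of $f$ with that of $L$; hence at the level of $\EG$ the two classifying data agree up to $G$-homotopy and produce the same trace on $C^*(G)$. Combining these ingredients yields
$$\int_X c_X(x)\,\mathcal{L}(TX)\wedge\mathrm{ch}(L) = \tau_L\bigl(\mathrm{ind}_G(\partial_X)\bigr) = \tau_{f_{}^{*}L}\bigl(\mathrm{ind}_G(\partial_Y)\bigr) = \int_Y c_Y(y)\,\mathcal{L}(TY)\wedge\mathrm{ch}(f_{}^{*}L),$$
which is the desired equality.

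The principal obstacle is the middle step, namely establishing the topological formula $\tau_L(\mathrm{ind}_G(\partial_X)) = \int_X c_X(x)\,\mathcal{L}(TX)\wedge\mathrm{ch}(L)$ in the proper co-compact equivariant setting. This is an almost-flat-bundle index theorem for the signature operator, and its verification in \cite{Fu16} requires delicate control of the curvature norms of the almost flat approximations of $L$ together with careful handling of the cutoff function $c_X$ to preserve proper support in the convolution algebra of $G$. Once this ingredient is granted, the homotopy invariance furnished by Theorem \ref{MainThm} completes the proof with no further analytic input.
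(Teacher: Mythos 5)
The paper's actual argument is more economical than the route you sketch, and sidesteps your stated crux entirely. Instead of invoking \cite{Fu16} twice (once on $X$ with $L$, once on $Y$ with $f^{*}L$) and then comparing two traces $\tau_L$ and $\tau_{f^{*}L}$, the paper forms the disjoint union $Z := X \sqcup (-Y)$ (with $Y$ orientation-reversed), observes by Theorem~\ref{MainThm} that $\ind_G(\partial_Z) = \ind_G(\partial_X) - \ind_G(\partial_Y) = 0 \in K_0(C^*(G))$, and then applies \cite[Theorem~A]{Fu16} \emph{once} to $Z$ with the bundle $L \sqcup f^{*}L$. Since that theorem asserts that vanishing of the $G$-index forces the vanishing of the low-degree higher signature integral, one gets $\int_Z c_Z \mathcal{L}(TZ)\wedge\mathrm{ch}(L\sqcup f^*L) = 0$, and the integral over $Z$ decomposes as $\int_X - \int_Y$. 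The only wrinkle is that $Z$ is disconnected, whereas \cite[Theorem~A]{Fu16} assumes connectedness; the paper patches this by building the cocycle $\alpha$ and the family $\{L_t\}$ entirely from $L$ on $X$ and pulling everything back along $f$ to the $Y$-component, so the construction on $Z$ is coherent. Note that this patching is precisely the fact you flagged as needing verification — that the classifying data for $L$ and $f^*L$ agree up to $G$-homotopy — but in the paper's route it is consumed at the level of the raw construction rather than repackaged as an equality of functionals on $K_0(C^*(G))$.

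The genuine gap in your proposal is the assumption that \cite{Fu16} supplies a \emph{formula} $\tau_L(\ind_G(\partial_X)) = \int_X c_X\,\mathcal{L}(TX)\wedge\mathrm{ch}(L)$ with $\tau_L$ a literal trace on $K_0(C^*(G))$. What the paper actually uses from \cite{Fu16} is the weaker implication that $\ind_G(\partial) = 0$ forces the higher-signature integral to vanish; extracting an explicit pairing $\tau_L$ from the almost-flat machinery of \cite{Fu16}, and then proving $\tau_L = \tau_{f^*L}$ as functionals, would require reproving a substantial chunk of \cite{Fu16} rather than citing it as a black box. If you want a proof by the route you propose you would need to extract that pairing explicitly; the disjoint-union trick lets you avoid it and only costs a mild adaptation of \cite{Fu16} to the disconnected case, which the paper carries out by pulling back the $X$-side data to the $Y$-side.
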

where $c_{X}^{}$ denotes the cut-off function,
that is,  $c_{X}^{}$ is a $\R_{\geq 0}^{}$-valued
compactly supported function on $X$ satisfying
$\int_{G}c(\gminv x) \: \dif \gm = 1$ for any $x \in X$.
In the case of the closed manifold, that is, when $X$ is obtained as
the universal covering of a closed manifold $M$,
and the acting group is the fundamental group,
the above value is equal to the ordinary, so called, higher signature
$\angles{ \mathcal{L}(TX) \cup \mathrm{ch}(L),\ [M] }$.
The same result in this case of closed manifolds
was obtained in \cite{Ma03} and \cite{HaSc08}.

Moreover in Section \label{SectionAlmFlat},
we will prove the $G$-homotopy invariance of the analytic signature
twisted by almost flat bundles as in \cite[Section 4.]{HilSk92}.
However we will use a different method from \cite{HilSk92}
to deal with general $G$-invariant elliptic operators.
To be specific, we will prove the following Theorem \ref{ThmAlmostFlatIndex} 
to obtain Corollary \ref{CorAlmostFlatSgn}.
\begin{thmalph}
\label{ThmAlmostFlatIndex}
Let $X$ be a complete oriented Riemannian manifold
and let $G$ be a locally compact Hausdorff group
acting on $X$ isometrically, properly and co-compactly.
Moreover we assume that $X$ is simply connected.
Let $D$ be a $G$-invariant properly supported elliptic operator of order $0$
on $G$-Hermitian vector bundle over $X$.

Then there exists $\ep > 0$ satisfying the following:
for any finitely generated projective
Hilbert $B$-module $G$-bundle $E$ over $X$
equipped with a $G$-invariant Hermitian connection
such that $\norm{R_{}^{E}} < \ep$,
we have 
$$
\mathrm{ind}_{G}^{}\parens{ [E] \hattensor_{C_{0}^{}(X)}^{} [D] }= 0 
\quad \in K_{0}^{} \parens{C_{\Max}^{*}(G) {\otimes_{\Max}^{}} B}
$$
if $\mathrm{ind}_{G}^{}([D]) =0 \in K_{0}^{}(C_{\Max}^{*}(G))$.
If we only consider commutative $C^{*}$-algebras for $B$, then the same conclusion
is also valid for $C_{\red}^{*}(G)$.
\end{thmalph}
\begin{coralph}
\label{CorAlmostFlatSgn}
Consider the same conditions as Theorem \ref{MainThm} on $X$, $Y$ and $G$
and assume additionally that $X$ and $Y$ are simply connected.

Then there exists $\ep > 0$ satisfying the following:
for any finitely generated projective
Hilbert $B$-module $G$-bundle $E$ over $X$ equipped with a $G$-invariant Hermitian connection
such that $\norm{R_{}^{E}} < \ep$,
we have 
$$
\mathrm{ind}_{G}^{} ([E]\hattensor [\partial_{X}^{}])
 = \mathrm{ind}_{G}^{} ([f_{}^{*}E]\hattensor [\partial_{Y}^{}]) 
\quad \in K_{0}^{}(C_{\Max}^{*}(G)\hattensor_{\Max}^{} B).
$$
If we only consider commutative $C^{*}$-algebras for $B$, then the same conclusion
is also valid for $C_{\red}^{*}(G)$.
\end{coralph}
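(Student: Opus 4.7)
The plan is to combine Theorem \ref{MainThm} with Theorem \ref{ThmAlmostFlatIndex} via a disjoint-union trick. Let $Z := X \sqcup (-Y)$, where $-Y$ denotes $Y$ equipped with the reversed orientation, and let $\partial_Z = \partial_X \oplus (-\partial_Y)$ be its signature operator. Since $f$ is orientation preserving, Theorem \ref{MainThm} gives
\[
\mathrm{ind}_G([\partial_Z]) \;=\; \mathrm{ind}_G([\partial_X]) - \mathrm{ind}_G([\partial_Y]) \;=\; 0 \in K_0(C^*(G)),
\]
so the base operator on $Z$ has vanishing equivariant index, which is exactly the hypothesis needed to invoke Theorem \ref{ThmAlmostFlatIndex}.

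After replacing $f$ by a $G$-equivariant smooth representative of its homotopy class (possible by standard equivariant smoothing on proper cocompact $G$-manifolds), I would assemble the bundle $\tilde E := E \sqcup f^*E$ over $Z$, equipped with the $G$-invariant Hermitian connection obtained by juxtaposing the given connection on $E$ with its pullback on $f^*E$. Because $f$ is $G$-equivariant and $Y/G$ is compact, the constant $C := \sup_{y \in Y} \|df_y\|$ is finite, and the pullback connection satisfies $\|R^{\tilde E}\| \leq \max(1, C^2)\, \|R^E\|$. Let $\varepsilon_0 > 0$ be the threshold supplied by Theorem \ref{ThmAlmostFlatIndex} applied to $Z$ and the order-zero bounded transform of $\partial_Z$, and set $\varepsilon := \varepsilon_0 / \max(1, C^2)$. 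Whenever $\|R^E\| < \varepsilon$, Theorem \ref{ThmAlmostFlatIndex} yields $\mathrm{ind}_G([\tilde E] \hattensor [\partial_Z]) = 0$, and decomposing this identity along the two components of $Z$ produces the desired equality
\[
\mathrm{ind}_G([E] \hattensor [\partial_X]) \;=\; \mathrm{ind}_G([f^*E] \hattensor [\partial_Y]).
\]
The reduced-$C^*$-algebra clause in the conclusion is obtained by invoking the reduced version of Theorem \ref{ThmAlmostFlatIndex} at this same step.

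The main technical obstacle is that $Z$ is not strictly simply connected: it is only a disjoint union of two simply connected components, whereas Theorem \ref{ThmAlmostFlatIndex} is formulated for a connected simply connected manifold. I expect the proof of Theorem \ref{ThmAlmostFlatIndex} to localize onto each connected component separately, because simple connectedness is used there only to produce, via parallel transport across a lift of a compact $G$-fundamental domain, an almost-flat $C^*$-algebraic trivialization of $E$, a construction that is manifestly per-component. The $K$-theoretic output over $Z$ then splits as a direct sum of the componentwise contributions, and the vanishing is preserved. If the formulation of Theorem \ref{ThmAlmostFlatIndex} in the body of the paper does not already cover finite disjoint unions of simply connected components, a short supplementary remark to that effect is all that is needed before invoking it here.
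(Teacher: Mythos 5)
Your overall plan matches the paper's: form $Z := X \sqcup (-Y)$, use Theorem~\ref{MainThm} to see $\mathrm{ind}_G(\partial_Z)=0$, feed the juxtaposed bundle $E \sqcup f^*E$ into Theorem~\ref{ThmAlmostFlatIndex} applied to $\partial_Z$, and split the conclusion along the two components. The smoothing of $f$ and the Lipschitz control $\|R^{f^*E}\|\le (\sup\|df\|)^2\|R^E\|$ are fine and do need to be said.

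However, your explanation of how to cope with the disconnectedness of $Z$ is wrong, and this is exactly the point the paper's proof is careful about. You write that simple connectedness ``is used there only to produce\dots an almost-flat $C^*$-algebraic trivialization\dots a construction that is manifestly per-component,'' so that ``the $K$-theoretic output over $Z$ then splits as a direct sum of the componentwise contributions, and the vanishing is preserved.'' That last clause fails: neither componentwise contribution vanishes. Where simple connectedness actually enters is Lemma~\ref{LemIndTwistedByW} and Proposition~\ref{MuW0}, which rewrite $[W]\hattensor[D] = [D]\hattensor[W]_{\mathrm{rpn}}$ using a \emph{single} holonomy representation built from a global flat section $\overline{w}(x)=\Phi_{x_0;x}w$ based at one point $x_0$. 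If you run the construction per component you obtain two a priori different classes $[W|_X]_{\mathrm{rpn}}$ and $[W|_Y]_{\mathrm{rpn}}$, and Proposition~\ref{MuW0} only gives
\begin{align*}
\mathrm{ind}_G\bigl([W_Z]\hattensor[\partial_Z]\bigr)
= \bigl(\mathrm{ind}_G[\partial_X]\bigr)\hattensor j^G[W|_X]_{\mathrm{rpn}}
- \bigl(\mathrm{ind}_G[\partial_Y]\bigr)\hattensor j^G[W|_Y]_{\mathrm{rpn}},
\end{align*}
which does \emph{not} vanish merely from $\mathrm{ind}_G[\partial_X]=\mathrm{ind}_G[\partial_Y]$; you also need the two holonomy classes to agree. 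The paper closes this gap by constructing the global flat section on $X$ only and then pulling it back via $f$ to the $Y$-component, producing a single coherent section of $W \sqcup f^*W$ over all of $Z$, hence a single $[W_Z]_{\mathrm{rpn}}$ to which Proposition~\ref{MuW0} applies verbatim. Your proposal needs this identification (or the explicit observation that $f$ intertwines the two holonomy representations) before the invocation of Theorem~\ref{ThmAlmostFlatIndex} over $Z$ is justified. The paper also phrases the argument as a proof by contradiction using the sequence $E^k$ with $\|R^{E^k}\|<1/k$, as in the proof of Theorem~\ref{ThmAlmostFlatIndex}, but that is a presentational difference, not a substantive one.
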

\section{Preliminaries on proper actions}
\begin{dfn}
Let $G$ be a second countable locally compact Hausdorff
group.
Let $X$ be a complete Riemannian manifold.
\begin{itemize}
\item
$X$ is called a $G$-Riemannian manifold
if $G$ acts on $X$ isometrically.
\item
The action of $G$ on $X$ is said to be proper
or $X$ is called a proper $G$-space
if the following continuous map is proper:
$
X\times G \to X \times X, \quad
(x, \gm)\mapsto (x, \gm x).
$
\item
The action of $G$ on $X$ is said to be co-compact
or
$X$ is called $G$-compact space if the quotient space
$X/G$ is compact.
\end{itemize}
\end{dfn}
\begin{dfn}
The action of $G$ on $X$
induces actions on $TX$ and $T_{}^{*}\!X$ given by
$$
\begin{array}{cccc}
&\gm \colon T_{x}^{}X &\to &T_{\gm x}^{}X \qquad \quad
\\
&\qquad v &\mapsto &\gm(v):=\gm_{*}^{}v
\end{array}
\qqand
\begin{array}{ccccc}
&\gm \colon T^{*}_{x}X &\to &T^{*}_{\gm x}X \qquad \qquad \quad
\\
&\qquad \xi &\mapsto &\gm(\xi):=(\gminv)_{}^{*}\xi.
\end{array}
$$
The action on $\mathfrak{X}(X)$ and $\Omega_{}^{*}(X)$ is given by
$$
\gm[V]
:= 
\gm_{*}^{}V
\qqand
\gm[\omega]
:= 
({\gminv})_{}^{*} \omega
$$
for $\gm\in G$, $V\in\mathfrak{X}(X)$
and $\omega \in \Omega_{}^{*}(X)$. Obviously,
$\gm[\omega \wedge \eta] = \gm[\omega] \wedge \gm[\eta]$
and $\dif (\gm [\omega]) = \gm[\dif \omega]$.
\end{dfn}
\begin{prop}[Slice theorem]\label{SliceThm}
Let $G$ be a second countable locally compact Hausdorff
group and act
properly and isometrically on $X$.
Then for any neighborhood $O$ of any point $x\in X$
there exists a compact subgroup $K \subset G$
including the stabilizer at $x$,
$K \supset G_{x} := \set{\gm \in G}{\gm x =x}$
and
there exists a $K$-slice $\{ x \} \subset S \subset O$.
\QED
\end{prop}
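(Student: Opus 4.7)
The plan is to build the slice in three steps: use properness to force $G_x$ to be compact, construct a local Riemannian model at $x$ via the exponential map, then upgrade $G_x$ to the required compact subgroup $K$.

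First, I would verify that the stabilizer $G_x$ is compact. Since the action map $\Phi \colon X \times G \to X \times X$, $(y,\gamma) \mapsto (y,\gamma y)$, is proper and $\Phi^{-1}(\{(x,x)\}) = \{x\} \times G_x$, the subset $G_x \subset G$ is compact.

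Second, I would use the Riemannian exponential to obtain a $G_x$-invariant neighborhood of $x$ inside $O$. Since $G_x$ is a compact group of isometries fixing $x$, its derivative at $x$ defines an orthogonal representation $G_x \to O(T_xX)$, and $\exp_x \colon T_xX \to X$ is $G_x$-equivariant. Choosing $\ep > 0$ small enough that $\exp_x$ restricts to a diffeomorphism from $B_\ep(0) \subset T_xX$ onto its image, which lies inside $O$, the set $S_0 := \exp_x(B_\ep(0))$ is a $G_x$-invariant open neighborhood of $x$ inside $O$.

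The main obstacle is the third step: producing a compact \emph{subgroup} $K \supset G_x$ of $G$ together with a $K$-invariant $S \subset S_0$ such that the tube map $G \times_K S \to GS$ is a $G$-equivariant open embedding; in particular $\gamma S \cap S \neq \emptyset$ forces $\gamma \in K$. Properness ensures that the return set $\{\gamma \in G : \gamma \overline{S_0} \cap \overline{S_0} \neq \emptyset\}$ is relatively compact, but not a priori a subgroup. For Lie $G$ one can take $K = G_x$ and conclude from the transversality of $\exp_x$ to the orbit $Gx$; for a general second countable locally compact $G$ one appeals to the local structure theory of locally compact groups near the compact subgroup $G_x$ (e.g.\ Gleason--Yamabe, which provides compact subgroups arbitrarily close to $G_x$) to extract the desired $K$, then shrinks $S_0$ to a $K$-invariant $S \subset S_0$. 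The remaining slice conditions (openness of $GS$ and the tube homeomorphism $G \times_K S \cong GS$) then follow from standard properness arguments.
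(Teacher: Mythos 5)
The paper states this proposition without proof (the \verb|\QED| follows immediately), treating it as a known slice theorem for proper actions, so there is no internal proof to compare against. Evaluating your argument on its own terms:

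Steps 1 and 2 are correct and standard: properness of $(y,\gamma)\mapsto(y,\gamma y)$ forces $G_x$ to be compact, and $\exp_x$ is $G_x$-equivariant and gives a $G_x$-invariant geodesic ball inside $O$.

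Step 3 is where the argument has a real gap. You invoke Gleason--Yamabe as ``providing compact subgroups arbitrarily close to $G_x$,'' but that is not what Gleason--Yamabe gives: it supplies arbitrarily \emph{small} compact \emph{normal} subgroups of some open subgroup, not compact subgroups \emph{containing} a prescribed compact subgroup such as $G_x$. Passing from one to the other requires extra work (e.g.\ forming $K = G_x N$ for a compact normal $N\lhd G_x G_0$ with $(G_x G_0)/N$ Lie), which you neither state nor carry out. In the Riemannian setting of this paper the cleaner and standard route avoids Gleason--Yamabe entirely: by Myers--Steenrod, $\mathrm{Isom}(X)$ is a Lie group; properness forces the image of $G\to\mathrm{Isom}(X)$ to be closed and the kernel $N$ to be compact (it lies in every stabilizer $G_x$), so $\bar G := G/N$ is a Lie group acting properly. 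Applying Palais's slice theorem to $\bar G$ yields a $\bar G_x$-slice $S$, and since $N\subset G_x$, the induced $G$-equivariant map $G(S)=\bar G(S)\to \bar G/\bar G_x = G/G_x$ exhibits $S$ as a $G_x$-slice for $G$, with $K=G_x$. This removes the need for a $K$ strictly larger than $G_x$ in the manifold case. Finally, your last sentence (``the remaining slice conditions then follow from standard properness arguments'') is precisely the core of the slice theorem; it should be either proved (by the usual transversality/tube argument for the Lie case) or attributed explicitly to Palais rather than asserted.
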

Here $S \subset X$ is called $K$-slice if the followings 
are satisfied;
\begin{itemize}
\item
$S$ is $K$-invariant; $K(S) = S$,
\item
the tubular subset $G(S) \subset X$ is open,
\item
there exists a $G$-equivariant map
$\map{\psi}{G(S)}{G/K}$ satisfying $\psi^{-1}([e]) = S$,
called a slice map.
\end{itemize}
\begin{cor}\label{CorSliceThm}
We additionally assume that $X$ is $G$-compact.
Then for any open covering
$X = \bigcup_{x \in X} O_{x}$,
there exists a sub-family of
finitely many open subsets $\{O_{x_i}, \ldots O_{x_N}\}$
such that
$$\bigcup_{\gm \in G} \bigcup_{i=1}^{N} \gm(O_{x_i})=X.$$

In particular, $X$ is of bounded geometry,
namely, the injective radius is bounded below and
the norm of Riemannian curvature is bounded.
\QED
\end{cor}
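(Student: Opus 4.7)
The corollary contains two assertions; I would prove them in order, deriving the bounded-geometry claim from the first finite-cover claim together with the slice theorem.

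For the finite-cover statement, the plan is to push everything down to the compact quotient $X/G$. Let $\pi \colon X \to X/G$ be the orbit projection. Then $\pi$ is open, because for any open $U \subset X$ the saturation $\pi^{-1}(\pi(U)) = \bigcup_{\gamma \in G} \gamma(U)$ is open in $X$. Hence $\{ \pi(O_x) : x \in X\}$ is an open cover of the compact space $X/G$, from which compactness extracts a finite subcover $\pi(O_{x_1}), \ldots, \pi(O_{x_N})$. Pulling back yields $X = \pi^{-1}(\bigcup_i \pi(O_{x_i})) = \bigcup_{\gamma \in G} \bigcup_{i=1}^{N} \gamma(O_{x_i})$, as required.

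For the bounded-geometry conclusion, the strategy is to feed a slice cover into the finiteness statement just proved. For each $x \in X$, fix a relatively compact open geodesic ball $O_x$ centered at $x$ and apply Proposition \ref{SliceThm} to obtain a compact subgroup $K_x \supset G_x$ and a $K_x$-slice $S_x \subset O_x$ through $x$. The tubular sets $G(S_x)$ form an open $G$-invariant cover of $X$, so the first part of the corollary furnishes finitely many base points $x_1, \ldots, x_N$ with $X = \bigcup_{\gamma \in G} \bigcup_{i=1}^{N} \gamma(S_{x_i})$, and each $\overline{S_{x_i}}$ is compact by construction.

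On each compact set $\overline{S_{x_i}}$ the injective radius, being a continuous positive function of the base point on a complete Riemannian manifold, attains a positive minimum $\varepsilon_i > 0$, and the pointwise norm of the Riemann curvature attains a finite maximum $C_i$. Since $G$ acts isometrically, both quantities are $G$-invariant functions on $X$, so the bounds valid on $\overline{S_{x_i}}$ immediately transfer to each translate $\gamma(\overline{S_{x_i}})$. Setting $\varepsilon := \min_i \varepsilon_i > 0$ and $C := \max_i C_i < \infty$ gives global bounds on $X$. The only real subtlety is arranging that the slices have compact closures; this is what the initial choice of a relatively compact $O_x$ is for, and Proposition \ref{SliceThm} guarantees a slice can be found inside any prescribed neighborhood, so this step is routine rather than an obstacle.
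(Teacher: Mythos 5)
The paper itself gives no proof of this corollary (the statement is immediately followed by \QED), so there is nothing in the text to compare against; what you have supplied is a genuine proof where the paper treats the claim as routine. Your argument is correct. The first part is exactly the standard reduction: the orbit map $\pi\colon X \to X/G$ is open because the saturation $\bigcup_{\gamma}\gamma(U)$ of an open set $U$ is open, so $\{\pi(O_x)\}$ covers the compact quotient, compactness extracts a finite subcover, and pulling back under $\pi$ gives the displayed identity. The second part also works: on a complete Riemannian manifold the injectivity radius is positive everywhere and at least lower semi-continuous, so it attains a positive minimum on each compact $\overline{O_{x_i}}$, and $\|R\|$ attains a finite maximum there; both quantities are invariant under isometries, hence the bounds propagate to all translates $\gamma(\overline{O_{x_i}})$ and thus to all of $X$.

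One small simplification is available in the second half: you do not need the slice theorem at all there. Having already proved the first assertion, you can apply it directly to a cover of $X$ by relatively compact geodesic balls $O_x$; the finitely many $O_{x_1},\dots,O_{x_N}$ and their $G$-translates cover $X$, and the compactness-plus-$G$-invariance argument then runs verbatim on $\overline{O_{x_i}}$. Passing through $K_x$-slices $S_x \subset O_x$ and the tubular sets $G(S_x)$ reaches the same place, since $G(S_x)$ is already $G$-invariant and $\overline{S_{x_i}} \subset \overline{O_{x_i}}$ is compact, but it is an unnecessary detour rather than an essential step. The slice theorem is the right tool for the \emph{first} assertion only in the sense that it is what makes $X/G$ a reasonable (locally compact Hausdorff) space, which you are implicitly using when you invoke its compactness; you could flag that properness of the action is what guarantees $X/G$ is Hausdorff.
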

\begin{lem}
\label{LemPropernessOfEquivMap}
Let $X$ and $Y$ be manifolds on which $G$ acts properly.
Suppose that the action on $Y$ is co-compact.
Let $\map{f}{Y}{X}$ be a $G$-equivariant continuous map.
Then $f$ is a proper map. 
\end{lem}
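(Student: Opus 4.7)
The plan is to show that $f^{-1}(K) \subset Y$ is compact for every compact $K \subset X$. Since $f$ is continuous, $f^{-1}(K)$ is automatically closed in $Y$, so the task reduces to producing a compact subset of $Y$ that contains it. The overall strategy is to use the co-compactness of the $G$-action on $Y$ to reduce to a compact ``cross section'' $C$ with $G \cdot C = Y$, and then to invoke the properness of the $G$-action on $X$ to pin down which group elements are actually needed to sweep out $f^{-1}(K)$.

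First I would extract such a $C$. Since $Y$ is a manifold, it is locally compact, so each $y \in Y$ admits a relatively compact open neighborhood $W_y$. The quotient map $\map{q}{Y}{Y/G}$ is open, so $\{ q(W_y) \}_{y \in Y}$ is an open cover of the compact space $Y/G$; finitely many sets $q(W_{y_1}), \ldots, q(W_{y_N})$ then suffice, and $C := \bigcup_{i=1}^{N} \overline{W_{y_i}}$ is a compact subset of $Y$ with $G \cdot C = Y$. Next, $f(C) \subset X$ is compact, and the hypothesis that the $G$-action on $X$ is proper says exactly that the map $X \times G \to X \times X$, $(x,\gm) \mapsto (x, \gm x)$, is proper. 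Hence the preimage of the compact set $f(C) \times K$ is compact, and its projection onto the $G$-factor is a compact subset $A \subset G$ with the property that any $\gm \in G$ satisfying $\gm \cdot f(c) \in K$ for some $c \in C$ lies in $A$.

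To finish, any $y \in f^{-1}(K)$ can be written as $y = \gm c$ with $c \in C$ and $\gm \in G$ by the choice of $C$. Applying $f$ and using $G$-equivariance, $\gm \cdot f(c) = f(\gm c) = f(y) \in K$, so $\gm \in A$, which gives $f^{-1}(K) \subset A \cdot C$. The right-hand side is the continuous image of the compact set $A \times C$ under the action map $G \times Y \to Y$, hence compact; since $f^{-1}(K)$ is a closed subset of it, we are done. The argument is essentially routine, so there is no real ``hard step.'' The only point that needs care is that the properness hypothesis lives on $X$ while the set to be controlled lives on $Y$; the bridge between them is $G$-equivariance applied to the compact cross section $C$, which turns the problem on $Y$ into a statement about how many group elements can move a point of $f(C)$ into $K$.
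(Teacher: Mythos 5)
Your proof is correct. You reach the same conclusion by a direct covering argument instead of the paper's sequence-and-contradiction argument, but the essential ingredients are identical: a compact cross-section $C \subset Y$ with $G\cdot C = Y$ from co-compactness, properness of the $G$-action on $X$ applied to $f(C)\times K$ to produce a compact $A \subset G$ of ``relevant'' group elements, and $G$-equivariance of $f$ to convert the containment $f^{-1}(K)\subset A\cdot C$ into the desired compactness.

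The paper's proof instead assumes $f^{-1}(K)$ is not compact, extracts a sequence $\{y_j\}$ tending to infinity, picks $\gamma_j$ with $y_j \in \gamma_j F$, argues $\{\gamma_j\}$ tends to infinity, concludes via properness of the action on $X$ that $\{f(y_j)\}$ tends to infinity, and derives a contradiction with $f(y_j)\in K$. Your direct route avoids the somewhat informal bookkeeping around ``tending to infinity'' and makes the role of each hypothesis more transparent. It also reveals that properness of the $G$-action on $Y$ is not actually needed: you only use local compactness of the manifold $Y$, openness of the quotient map (true for any group action), compactness of $Y/G$, and properness of the action on $X$. The paper nominally invokes properness on $Y$ when choosing $\gamma_j$, but a closer look shows that step only needs compactness of $F$; so your version, while morally equivalent, is marginally tighter on the hypotheses.
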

\begin{proof}
Since the action on $Y$ is co-compact, there exists a compact subset $F \subset Y$
satisfying $G(F) = Y$.
Fix a compact subset $C \subset X$ and assume that
the closed set $f_{}^{-1}C \subset Y$ is not compact.
Then there exists a sequence $\{ y_{j}^{} \} \subset f_{}^{-1}C$ tending to the infinity,
that is, any compact subset in $Y$ contains only finitely many points of $\{y_{j}^{} \}$.
Since the action on $Y$ is proper, there exists a sequence $\{ \gm_{j}^{} \} \subset G$
tending to the infinity satisfying $y_{j}^{} \in \gm_{j}^{} F$.
Then it follows that $f(y_{j}^{}) \in f(\gm_{j}^{} F) = \gm_{j}^{} f(F)$.
Due to the compactness of  $f(F) \subset X$ and the properness of the action on $X$,
the sequence $\{ f(y_{j}^{}) \} \subset X$ tends to the infinity.
However, the compact subset $C$ cannot contain such a sequence.
So, $f_{}^{-1}C$ is compact.
\end{proof}
\section{Perturbation arguments}
In this section, we will discuss on some technical method
introduced in \cite[Section 1 and 2]{HilSk92}.
For now, we will forget about the manifolds and group actions.
Let $A$ be a $C^*$-algebra, which may not be unital.
Especially we will consider $A = C^{*}(G)$.
Let $\E$ be a Hilbert $A$-module equipped with $A$-valued scalar product
$\angles{\cdot, \cdot}$. 
Let us fix some notations:
\begin{itemize}
\item
$\mathbb{L}(\E_{1}^{}, \E_{2}^{})$ denotes a space consisting of adjointable $A$-linear operators,
and we also use $\mathbb{L}(\E) := \mathbb{L}(\E, \E)$.
\item
$\mathbb{K}(\E_{1}^{}, \E_{2}^{})$ denotes a sub-space of  $\mathbb{L}(\E_{1}^{}, \E_{2}^{})$
consisting of compact $A$-linear operators,
namely, the norm closure of the space of operators whose $A$-rank are finite. 
We also use $\mathbb{K}(\E) := \mathbb{K}(\E, \E)$.
\end{itemize}

\subsection{Quadratic forms and graded modules}
\begin{dfn}[Regular quadratic forms]
$\map{Q}{\E \times \E}{A}$ is called a quadratic form on $\E$ if it satisfies 
\begin{align}
Q(\xi, \nu) = Q(\nu, \xi)^{*} \qqand Q(\nu, \xi a) = Q(\nu, \xi)a
\qqfor \nu, \xi \in \E,\ a \in A. 
\end{align}
A quadratic form $Q$ is said to be regular if there exists a invertible operator
$B \in \mathbb{L}(\E)$ satisfying that
$Q(\xi, B \nu) = \angles{\xi, \nu} $.

For an operator $T \in \mathbb{L}(\E)$, let $T^{\prime}$ denote the adjoint with respect to
$Q$, that is, an operator satisfying that $Q(T \xi, \nu) = Q(\xi, T^{\prime} \nu)$.
Using $B$, it is written as $T^{\prime} = BT^{*}B^{-1}$.
\end{dfn}

\begin{dfn}[Compatible scalar product]
Another scalar product $\angles{\cdot, \cdot}_{1}^{} \colon \E \times \E \to A$ is called 
compatible with $\angles{\cdot, \cdot}$ 
if there exists a linear bijection $P \colon \E \to \E$
satisfying that $\angles{\nu, \xi}_{1}^{} = \angles{\nu, P \xi }$.
\end{dfn}
Note that
$P$ is a positive operator with respect to both of the scalar product
and 
$\sqrt{P} \colon (\E, \angles{\cdot, \cdot}_{1}^{}) \to (\E, \angles{\cdot, \cdot})$ 
is a unitary isomorphism.
In particular, neither the spaces $\mathbb{L}(\E)$ nor $\mathbb{K}(\E)$ depends on
the choice of compatible scalar product.

\begin{lem}
\label{LemAssociatedScalarProd}
Let $Q$ be a regular quadratic form on $\E$.
then there exist a compatible scalar product $\angles{\cdot, \cdot}_{Q}^{}$ with the initial
scalar product of $\E$
and $U \in \mathbb{L}(\E)$ satisfying that
$Q(\xi, U \nu) = \angles{\xi, \nu}_{Q}^{}$ and $U^{2} = 1$.
Moreover they are unique. 
\end{lem}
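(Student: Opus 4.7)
The plan is to reduce the lemma to the polar decomposition of the invertible operator $B$ that implements $Q$. A preliminary step is to verify that $B$ is necessarily self-adjoint in the initial scalar product: substituting $\nu\mapsto B^{-1}\nu$ in $Q(\xi,B\nu)=\angles{\xi,\nu}$ yields $Q(\xi,\nu)=\angles{\xi,B^{-1}\nu}$, and comparing this with $Q(\xi,\nu)=Q(\nu,\xi)^{*}=\angles{\nu,B^{-1}\xi}^{*}=\angles{\xi,(B^{-1})^{*}\nu}$ forces $B^{-1}=(B^{-1})^{*}$, hence $B=B^{*}$.

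Since $B\in\mathbb{L}(\E)$ is self-adjoint and invertible, its spectrum is a compact subset of $\R\setminus\{0\}$, so continuous functional calculus in $\mathbb{L}(\E)$ produces $|B|:=(B^{2})^{1/2}$ and $U:=\sgn(B)=B|B|^{-1}$, enjoying $U=U^{*}$, $U^{2}=1$, $U|B|=|B|U$, and $B=U|B|$. I then propose
\[
\angles{\xi,\nu}_{Q}:=\angles{\xi,|B|^{-1}\nu},
\]
which is a compatible scalar product because $|B|^{-1}\in\mathbb{L}(\E)$ is positive and invertible. Using $B^{-1}U=|B|^{-1}U^{-1}U=|B|^{-1}$, the required identity collapses to a single line:
\[
Q(\xi,U\nu)=\angles{\xi,B^{-1}U\nu}=\angles{\xi,|B|^{-1}\nu}=\angles{\xi,\nu}_{Q}.
\]

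For uniqueness, suppose $(\angles{\cdot,\cdot}'_{Q},U')$ is another such pair with $\angles{\cdot,\cdot}'_{Q}=\angles{\cdot,P'\cdot}$ for some positive invertible $P'\in\mathbb{L}(\E)$ and $(U')^{2}=1$. Compatibility forces $U'=BP'$, and squaring gives $BP'BP'=1$, equivalently $BP'B=(P')^{-1}$. I expect the main obstacle to be extracting the commutativity $[B,P']=0$ from these relations together with the natural self-adjointness $(U')^{*}=U'$ (which is automatic for the $U$ produced by functional calculus, since $B$ and $|B|^{-1}$ commute); once commutativity is in hand, $B^{2}(P')^{2}=1$ combined with the positivity of $P'$ forces $P'=|B|^{-1}$, and hence $U'=\sgn(B)$, completing the uniqueness.
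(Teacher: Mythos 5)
Your existence argument is correct and coincides with the paper's in substance. The paper polar-decomposes $B^{-1}=UP$ via the functional calculus with $f(x)=x/|x|$ and $g(x)=|x|$, which yields exactly your $U=\sgn(B)$ and $P=|B|^{-1}$; the scalar product $\angles{\cdot,P\cdot}$ is your $\angles{\cdot,|B|^{-1}\cdot}$, and the verification $Q(\xi,U\nu)=\angles{\xi,\nu}_Q$ is the same one-line computation. (Cosmetically, the paper first shows that $B^{-1}$ is self-adjoint rather than $B$, but the two are equivalent.)

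The uniqueness part does not go through, and the step you single out as the ``main obstacle'' is in fact circular. Writing $U'=BP'$ you have $(U')^{*}=P'B$, so the identity $(U')^{*}=U'$ is \emph{literally} the statement $[B,P']=0$; there is nothing to ``extract'' from it. Moreover that self-adjointness is not among the hypotheses on the second pair: you are only given $(U')^{2}=1$ and that $Q(\cdot,U'\cdot)$ is a compatible scalar product. The algebraic relations you do have ($B=B^{*}$ invertible, $P'>0$ invertible, $BP'BP'=1$) do not by themselves imply $[B,P']=0$: in $\mathbb{L}(\C^{2})$ take $B=\mathrm{diag}(1,-1)$ and $P'=\begin{pmatrix}\sqrt{2}&1\\1&\sqrt{2}\end{pmatrix}$, which is positive invertible and satisfies $BP'BP'=1$, yet $P'\neq|B|^{-1}=1$ and $[B,P']\neq 0$. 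The paper's own uniqueness argument takes a different and much shorter route: it claims that $U_{1}^{-1}U$ is simultaneously positive and unitary, and a positive unitary must be $1$. That route bypasses your commutativity question entirely, but it tacitly uses that $U_{1}$ is unitary (equivalently, self-adjoint) for the initial Hilbert module structure --- an extra normalization that is not visibly imposed by the lemma's hypotheses. If you want a clean uniqueness statement, you should make that normalization explicit; without it, the $2\times 2$ example above shows the pair $(\angles{\cdot,\cdot}_Q,U)$ is genuinely not determined by the hypotheses as written.
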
 
\begin{proof}
With respect to the initial scalar product $\angles{\cdot, \cdot}$, we have that 
\begin{align*}
\angles{\nu,\ B^{-1}\xi} = Q(\nu,\ \xi) = Q(\xi,\ \nu)^{*}
=\angles{\xi,\ B^{-1} \nu}^{*} = \angles{B^{-1} \nu,\ \xi} = \angles{\nu,\ (B^{-1})^{*} \xi},
\end{align*}
which implies that $B^{-1}$ is an invertible self-adjoint operator.
Thus, it has the polar decomposition $B^{-1}=UP$
in which $B^{-1}$, $U$ and $P$ commute one another, here $U$ is unitary and $P$ is positive.
To be specific, $U$ and $P$ are given by the continuous functional calculus.
Let $f$ and $g$ be continuous functions given by
$f(x) := \frac{x}{|x|}$ and $g(x) := |x|$
on the spectrum of $B^{-1}$, which is 
contained in $\R \setminus \{ 0\}$, and set $U:=f(B^{-1})$ and $P := g(B^{-1})$.
Note that 
$$U= P^{-1} B^{-1} = P^{-1} (B^{-1})^{*} = P^{-1}PU^{*} = U^{*},$$
so it follows that $U^{2} = U^{*}U = 1$.
Let us set
$\angles{\nu, \xi}_{Q}^{} := \angles{\nu, P\xi}$.
Then, 
\begin{align*}
Q(\nu,\ U\xi) = Q(\nu,\ U^{-1}\xi)  = Q(\nu,\ B P\xi)
 = \angles{ \nu,\ P \xi} = \angles{ \nu,\ \xi}_{Q}^{}.
\end{align*}
If there is another such operator $U_{1}^{}$ satisfying that $U_{1}^{2}=1$ and that
$Q(\nu, U_{1}^{} \xi)$ is another scalar product,
then $U_{1}^{-1}U$ is a positive unitary operator, which implies that $U_{1}^{-1}U = 1$.
Thus we obtained the uniqueness.
\end{proof}
\begin{rem}
A regular quadratic form $Q$ on a Hilbert $A$-module $\E$
determines the renewed compatible scalar product $\angles{\cdot, \cdot}_{Q}$
associated to $Q$
and the $(\Z/2\Z)$-grading given by the $\pm 1$-eigen spaces of $U$.
Conversely, if a Hilbert $A$-module $\E$ is equipped with a $(\Z/2\Z)$-grading,
then it determines a regular quadratic form $Q$ given by
$Q(\nu, \xi) = \angles{\nu, (-1)_{}^{\deg(\xi)}\xi }$ for homogeneous elements. 
\end{rem}
\begin{dfn}
\label{DfnJA}
Let $A$ be a $C^{*}$-algebra.
$\mathbb{J}(A)$ denotes the space consisting of unitary equivalent classes of triples
$(\E, Q, \delta)$, where 
$\E$ is a Hilbert $A$-module, $Q$ is a regular quadratic form on $\E$
and $\map {\delta}{\mathrm{dom}(\delta)} {\E}$ is a densely defined closed operator
satisfying the following conditions;
\begin{enumerate}
\item
$\delta_{}' = -\delta$, namely, $Q(-\delta(\nu), \xi) = Q(\nu, \delta(\xi))$ for
$\nu, \xi \in \mathrm{dom}(\delta)$.
\item
$\Im (\delta) \subset \mathrm{dom}(\delta)$ and $\delta_{}^{2} =0$.
\item
There exists $\sigma, \tau \in \mathbb{K}(\E)$ satisfying
$\sigma \delta + \delta \tau - 1 \in \mathbb{K}(\E)$.
\end{enumerate}
\end{dfn}
The typical example, which we will use for dealing with the signature,
is given by Definition \ref{DfnQuadraticForSgn}.
Roughly speaking, $\E$ is a completion of the space of compactly supported 
differential forms $\Omega_{c}^{*}$, $Q$ is given by the Hodge $\ast$-operation
and $\delta$ is the exterior derivative.
\begin{rem}
This definition is slightly different from $\mathbf{L}_{\mathrm{nb}}^{}(A)$
in \cite[1.5 D\'{e}finition]{HilSk92} and our $\mathbb{J}(A)$ is smaller.
However it is sufficient for our purpose.
\end{rem}
\begin{lem}
If a closed operator $\delta$ satisfies the condition (3),
then both operators $(\delta + \delta_{}^{*} \pm i)_{}^{-1}$ can be defined and 
they belong to $\mathbb{K}(\E)$.
Here, $\delta_{}^{*}$ denotes the adjoint of $\delta$ with respect to
a certain scalar product on $\E$.
\end{lem}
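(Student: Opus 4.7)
The plan is to equip $\E$ with a compatible scalar product $\angles{\cdot,\cdot}_{Q}$ as in Lemma~\ref{LemAssociatedScalarProd}, with respect to which the $\Z/2\Z$-grading $\E=\E^{+}\oplus\E^{-}$ (given by the $\pm1$-eigenspaces of $U$) is orthogonal, and then to analyze $D:=\delta+\delta^{*}$. The compatibility of $Q$ with the grading together with $\delta'=-\delta$ forces $\delta$ to be off-diagonal, $\delta=\bigl(\begin{smallmatrix}0&0\\ \delta&0\end{smallmatrix}\bigr)$, with $\delta^{*}$ the transposed matrix; so $D$ is symmetric, closed and densely defined on $\dom(\delta)\cap\dom(\delta^{*})$.

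The essential use of condition~(3) is to repackage $\sigma\delta+\delta\tau=1+K_{0}$ as parametrix relations for $\delta$ alone. Decomposing $\sigma,\tau,K_{0}\in\mathbb{K}(\E)$ as $2\times 2$ matrices with respect to the grading and comparing matrix entries (using that $\delta$ shifts $\E^{+}$ into $\E^{-}$), one extracts compact operators $\sigma_{+-},\tau_{+-}\in\mathbb{K}(\E^{-},\E^{+})$ with $\sigma_{+-}\delta=1+K_{0,++}$ on $\E^{+}$ and $\delta\tau_{+-}=1+K_{0,--}$ on $\E^{-}$; taking adjoints gives analogous relations for $\delta^{*}$. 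In other words, the closed operator $\delta\colon\E^{+}\to\E^{-}$ admits a left and a right inverse modulo compacts.

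With these parametrices in hand I would first verify that $D$ is regular self-adjoint on the Hilbert $A$-module $\E$: the above relations together with $\delta^{2}=0$ permit one to show that $D\pm i$ have dense range, which combined with symmetry and closedness yields regular self-adjointness in the Hilbert module sense. I would then write the resolvent in block form
$$
(D+i)^{-1}=\pmatrixtwo{-i(\delta^{*}\delta+1)^{-1}}{\delta^{*}(\delta\delta^{*}+1)^{-1}}{\delta(\delta^{*}\delta+1)^{-1}}{-i(\delta\delta^{*}+1)^{-1}},
$$
using $\delta^{2}=(\delta^{*})^{2}=0$. Since $\delta(\delta^{*}\delta+1)^{-1}$ and $\delta^{*}(\delta\delta^{*}+1)^{-1}$ are already bounded, it suffices to prove the diagonal entries are compact. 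Multiplying $\sigma_{+-}\delta=1+K_{0,++}$ on the right by $(\delta^{*}\delta+1)^{-1}$ expresses $(\delta^{*}\delta+1)^{-1}$ as $\sigma_{+-}\bigl(\delta(\delta^{*}\delta+1)^{-1}\bigr)-K_{0,++}(\delta^{*}\delta+1)^{-1}$, a sum of products of compact with bounded operators, hence compact; the case of $(\delta\delta^{*}+1)^{-1}$ is symmetric. The resolvent $(D-i)^{-1}$ is then handled by taking adjoints.

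The main obstacle I expect is the first step, namely verifying that $D$ is truly regular self-adjoint on $\E$ rather than merely symmetric. Regularity in the Hilbert $A$-module setting is not automatic from density of the domain, and one must either produce the resolvent directly or check adjointability of the bounded transform $D(1+D^{2})^{-1/2}$. The parametrix relations from condition~(3), combined with $\delta^{2}=0$, supply exactly the structural input needed for this: they let one invert $D\pm i$ on a dense subset and absorb the resulting compact corrections through a Neumann-series argument.
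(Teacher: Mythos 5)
Your strategy — produce the resolvents via regular self-adjointness of $\delta+\delta^*$, then use the parametrix from condition (3) to show they are compact — is the same in spirit as the paper's. However there is a genuine error in the decisive structural claim, and it propagates through the whole argument.

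You assert that $\delta'=-\delta$ forces $\delta$ to be off-diagonal with respect to the $\pm1$-eigenspaces of $U$, i.e.\ $\delta=\bigl(\begin{smallmatrix}0&0\\ \delta&0\end{smallmatrix}\bigr)$. This is false. In the scalar product $\angles{\cdot,\cdot}_Q$, the form $Q$ is implemented by $U$, so the $Q$-adjoint is $T'=UT^{*}U$. The identity $\delta'=-\delta$ therefore reads $U\delta^{*}U=-\delta$, equivalently $U\delta U=-\delta^{*}$. Decomposing $\delta=\delta_0+\delta_1$ into its $U$-even and $U$-odd parts, this relation says $\delta_0^{*}=-\delta_0$ and $\delta_1^{*}=\delta_1$; it does \emph{not} kill $\delta_0$. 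What is odd is $D=\delta+\delta^{*}=2\delta_1$, not $\delta$. (In the de Rham model this is exactly the familiar fact that the Hodge involution satisfies $\tau d\tau=\pm d^{*}$, not $\tau d\tau=-d$: $d$ changes form degree, not the $\tau$-eigenspace.) Consequently the block resolvent formula $(D+i)^{-1}=\bigl(\begin{smallmatrix}-i(\delta^{*}\delta+1)^{-1}&\delta^{*}(\delta\delta^{*}+1)^{-1}\\ \delta(\delta^{*}\delta+1)^{-1}&-i(\delta\delta^{*}+1)^{-1}\end{smallmatrix}\bigr)$ is not valid here, and the extraction of $\sigma_{+-},\tau_{+-}$ with $\sigma_{+-}\delta=1+K_{0,++}$, etc., from the $2\times2$ entries is unfounded since $\delta$ has nonzero diagonal blocks.

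The paper avoids any such block decomposition. It works with the parametrix identity $\sigma\delta+\delta\tau=1+K$ directly on $\E$: since $\delta^2=0$, the operators $\sigma\delta$ and $\delta\tau$ are idempotents modulo $\mathbb{K}(\E)$; letting $p$ be the projection onto $\overline{\Im(\delta\tau)}$ and $q=1-p$, one checks $1-(\delta^{*}\sigma^{*})q-(\delta\tau)p\in\mathbb{K}(\E)$, and finally multiplies by $(\delta+\delta^{*}\pm i)^{-1}$, using that $\alpha_{\pm}=\delta(\delta+\delta^{*}\pm i)^{-1}$ and $\beta_{\pm}=\delta^{*}(\delta+\delta^{*}\pm i)^{-1}$ are everywhere defined closed operators, hence bounded by the closed graph theorem. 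This is an "ungraded'' manipulation and does not presuppose any off-diagonality of $\delta$. Your concern that regular self-adjointness of $D$ in the Hilbert-module sense is not automatic is well placed (the paper itself is somewhat terse on this point), but the argument you build on top of it collapses already at the block-structure claim and would need to be redone along the lines above.
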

\begin{proof}
Since $\delta$ is a closed operator, $\delta + \delta_{}^{\ast}$ is self-adjoint.
Thus $\Im(\delta + \delta_{}^{\ast} \pm i)$ are equal to $\E$
and both operators $\delta + \delta_{}^{\ast} \pm i$ are invertible.
We now claim that 
both $(\delta + \delta_{}^{\ast} \pm i)_{}^{-1} \in \mathbb{L}(\E)$
are compact operators.
Since $\Im((\delta + \delta_{}^{\ast} \pm i)_{}^{-1})=\mathrm{dom}(\delta + \delta_{}^{\ast} \pm i) = \mathrm{dom}(\delta) \cap \mathrm{dom}(\delta_{}^{\ast})$
and $\delta$ and $\delta_{}^{\ast}$ are closed operators,
the following operators
\begin{align*}
\alpha_{\pm}^{} := \delta (\delta + \delta_{}^{\ast} \pm i)_{}^{-1}
\qqand
\beta_{\pm}^{} := \delta_{}^{\ast} (\delta + \delta_{}^{\ast} \pm i)_{}^{-1}
\end{align*}
are closed operator
defined on entire $\E$, which implies that they are bounded;
$\alpha, \beta \in \mathbb{L}(\E)$.

On the other hand,
note that $(\sigma \delta)_{}^{2} = (\sigma \delta)(1-\delta \tau) = \sigma \delta$
and $(\delta \tau)_{}^{2} = \delta \tau$
modulo $\mathbb{K}(\E)$.
Let $p$ be the orthogonal projection onto $\Im(\delta \tau)$
and let $q = 1-p$.
Then we have that $ p (\delta \tau) = \delta \tau$
and $(\delta \tau) p = p $ modulo $\mathbb{K}(\E)$.
Moreover,
\begin{align*}
(\sigma \delta) q &= 
(1 - \delta \tau) (1-p) = 1 - \delta \tau - p + (\delta \tau) p = 1 - \delta \tau = \sigma \delta,
\\
q (\sigma \delta) &= 
(1-p) (1 - \delta \tau) = 1 - p - \delta \tau + p (\delta \tau) = 1 - p = q,
\\
1- (\delta_{}^{\ast} \sigma_{}^{\ast}) q - (\delta \tau) p
&=
1- (q \sigma \delta)_{}^{\ast} - p
\\
&=
1- q_{}^{\ast} - p
=
1- q - p
=
0 \quad \text{ modulo } \mathbb{K}(\E).
\end{align*}
Then, set $\ell := 1- (\delta_{}^{\ast} \sigma_{}^{\ast}) q - (\delta \tau) p \in \mathbb{K}(\E)$.
Now we conclude that
\begin{align*}
1 &= \ell + (\delta_{}^{\ast} \sigma_{}^{\ast} q - \delta \tau p ),
\\
(\delta + \delta_{}^{\ast} \pm i)_{}^{-1}
&= (\delta + \delta_{}^{\ast} \pm i)_{}^{-1}\ell
 + (\alpha_{\mp}^{\ast} \sigma_{}^{\ast} q - \beta_{\mp}^{\ast} \tau p ) 
\in \mathbb{K}(\E)
\end{align*}
because $\ell$, $\sigma$ and $\tau$ belong to $\mathbb{K}(\E)$ and
$\alpha_{\pm}^{} $ and $\beta_{\pm}^{} $ belong to $\mathbb{L}(\E)$.
\end{proof}

\begin{dfn}
For $(\E, Q, \delta) \in \mathbb{J}(A)$,
we define the $K$-homology class $\Psi(\E, Q, \delta) \in K_{0}(A)$ as follows.
As in Lemma \ref{LemAssociatedScalarProd},
let $\E$ be equipped with the compatible scalar product $\angles{\cdot, \cdot}_{Q}^{}$ and 
$(\Z/2\Z)$-grading associated to $Q$.
Next, put
\begin{align*}
F_{\delta}^{} := (\delta + \delta_{}^{\ast})
\parens{1 + (\delta + \delta_{}^{\ast})_{}^{2}}_{}^{-\half} \in \mathbb{L}(\E),
\end{align*}
where $\delta_{}^{*}$ is the adjoint of $\delta$ with respect to the scalar product
$\angles{\cdot, \cdot}_{Q}^{}$.
Obviously $F_{\delta}^{}$ is self-adjoint and 
$F_{\delta}^{}$ is an odd operator since $U \delta U = \delta_{}' = -\delta$.
Moreover it follows that
\begin{align*}
1-F_{\delta}^{2}
=
\parens{1 + (\delta + \delta_{}^{\ast})_{}^{2}}_{}^{-1} \in \mathbb{K}(\E)
\end{align*}
by the previous lemma.
Then we define 
$\Psi(\E, Q, \delta) := (\E, F_{\delta}^{}) \in K{\!}K(\C, A) \cong K_{0}(A)$.
The action of $\C$ on $\E$ is the natural multiplication.
\end{dfn}

\begin{lem}
\label{LemKerEqIm}
For $(\E, Q, \delta) \in \mathbb{J}(A)$ satisfying $\Im (\delta) = \Ker (\delta)$,
$\Psi (\E, Q, \delta) = 0 \in K_{0}(A)$.
\end{lem}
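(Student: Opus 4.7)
The plan is to deduce $\Psi(\E, Q, \delta) = 0$ by showing that $D := \delta + \delta_{}^{*}$ is invertible as a regular self-adjoint operator under the acyclicity hypothesis, then using an operator homotopy to reduce $F_{\delta}$ to a degenerate $KK$-cycle whose class is manifestly zero.

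First I verify that $\Ker D = 0$. For $\xi \in \dom \delta \cap \dom \delta_{}^{*}$, condition (2) of Definition \ref{DfnJA} (which gives $\delta \xi \in \dom \delta$) together with $\delta_{}^{2} = 0$ makes the cross terms vanish, so $\angles{D \xi, D \xi}_{Q} = \angles{\delta \xi, \delta \xi}_{Q} + \angles{\delta_{}^{*} \xi, \delta_{}^{*} \xi}_{Q}$, and $D \xi = 0$ forces $\xi \in \Ker \delta \cap \Ker \delta_{}^{*}$. The hypothesis $\Im \delta = \Ker \delta$ then lets me write $\xi = \delta \eta$, and $\angles{\xi, \xi}_{Q} = \angles{\delta \eta, \xi}_{Q} = \angles{\eta, \delta_{}^{*} \xi}_{Q} = 0$ forces $\xi = 0$.

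Next I upgrade this to operator-theoretic invertibility of $D$. Since $\Im \delta = \Ker \delta$ is closed, $\delta$ has closed range, and a dual argument combining $(\delta_{}^{*})_{}^{2} = 0$ with closedness of $\Im \delta_{}^{*}$ yields $\Im \delta_{}^{*} = \Ker \delta_{}^{*}$. The Hilbert-module Hodge decomposition then produces an orthogonal complemented splitting $\E = \Im \delta \oplus \Im \delta_{}^{*}$ on which $D$ acts as $\delta_{}^{*}$ on the first summand and as $\delta$ on the second, giving closed bijections $\delta \colon \Im \delta_{}^{*} \to \Im \delta$ and $\delta_{}^{*} \colon \Im \delta \to \Im \delta_{}^{*}$. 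Combined with the compactness of $(D \pm i)_{}^{-1}$ proved in the previous lemma, this yields $D_{}^{-1} \in \mathbb{L}(\E)$. With $D$ invertible, form the norm-continuous family $F_{t} := D(t + D_{}^{2})_{}^{-1/2}$ for $t \in [0, 1]$. At $t = 1$ one recovers $F_{\delta}$, while at $t = 0$ one obtains the self-adjoint unitary $D(D_{}^{2})_{}^{-1/2}$. Each $F_{t}$ is self-adjoint and odd, and $1 - F_{t}^{2} = t(t + D_{}^{2})_{}^{-1} \in \mathbb{K}(\E)$, so the path consists of $KK$-cycles and $\Psi(\E, Q, \delta) = [\E, F_{0}] \in K_{0}(A)$. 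The cycle $(\E, F_{0})$ is degenerate since $F_{0}^{2} = 1$ exactly and $\C$ acts by scalars, so its class vanishes.

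The main obstacle is the invertibility step: over a general $C_{}^{*}$-algebra, compact resolvent plus trivial kernel does not by itself imply that a regular self-adjoint operator is invertible, so the closed-range structure encoded in $\Im \delta = \Ker \delta$ must be used essentially to justify the Hilbert-module Hodge decomposition and produce a genuine spectral gap at $0$.
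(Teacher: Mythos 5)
Your proof is correct and is essentially the paper's argument, but it supplies a step that the paper elides—and elides in a way that is not obviously harmless. The paper shows $\Ker(\delta+\delta^{*})=\{0\}$ exactly as you do (via orthogonality of $\Im\delta$ and $\Ker\delta^{*}$ and the acyclicity hypothesis), and then concludes in a single line: ``Since $F_{\delta}$ is a bounded self-adjoint operator, it is invertible.'' For operators on Hilbert spaces this would follow from the Fredholm alternative (self-adjoint Fredholm operator with trivial kernel has index zero and closed range, hence is invertible), but for Hilbert $A$-modules it is not automatic: a bounded self-adjoint operator with $1-F^{2}$ compact and trivial kernel can fail to have closed range, so injectivity alone does not give invertibility. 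You correctly identify this as the crux and close the gap by invoking the closed-range theorem for regular operators (since $\Im\delta=\Ker\delta$ is closed, $\delta$ and $\delta^{*}$ both have closed, complemented range) to obtain the Hodge-type decomposition $\E=\Im\delta\oplus\Im\delta^{*}$ on which $D=\delta+\delta^{*}$ acts as a closed bijection with bounded, adjointable inverse. Your final step, replacing the paper's implicit ``invertible essentially-unitary $F$ gives zero in $KK$'' by the explicit operator homotopy $F_{t}=D(t+D^{2})^{-1/2}$ to the degenerate cycle $(\E,\mathrm{sgn}\,D)$, is equivalent but makes the conclusion transparent. In short: same approach, same key computation showing $\Ker D=0$, but you provide the missing justification that the closed-range structure—not mere injectivity—is what forces $D$ to have a spectral gap at $0$.
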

\begin{proof}
First, remark that 
$\Im (\delta)$ and $\Ker (\delta_{}^{\ast})$ are orthogonal to each other,
and hence, $\Im (\delta) \cap \Ker (\delta_{}^{\ast}) = \{ 0 \}$.
Indeed, for $\delta(\eta) \in \Im (\delta)$ and $\nu \in \Ker (\delta_{}^{\ast})$,
it follows that $\angles{\delta(\eta), \nu} = \angles{\eta, \delta_{}^{\ast}(\nu)} = 0$.
Now let $\xi \in \Ker (\delta + \delta_{}^{\ast})$.
Then 
$$
0
= \angles{\xi,\ (\delta + \delta_{}^{\ast})_{}^{2}(\xi)}
= \angles{\xi,\ \delta_{}^{\ast} \delta (\xi) + \delta \delta_{}^{\ast} (\xi)}
= \angles{\delta (\xi), \delta (\xi)} + \angles{\delta_{}^{\ast} (\xi), \delta_{}^{\ast} (\xi)},
$$
which implies that $\xi \in \Ker (\delta) \cap \Ker (\delta_{}^{\ast})
= \Im (\delta) \cap \Ker (\delta_{}^{\ast}) = \{ 0 \}$.
Therefore, $\Ker (F_{\delta}^{}) = \{ 0 \}$. Since $F_{\delta}^{}$ is a bounded self-adjoint operator,
it is invertible.
To conclude, $(\E, F_{\delta}^{}) = 0 \in K{\!}K(\C, A)$.
\end{proof}

\subsection{Perturbation arguments}

\begin{lem}{\upshape \cite[2.1. Lemme]{HilSk92}}
\label{LemHilSkPerturb}
Let $(\E_{X}^{}, Q_{X}^{}, \delta_{X}^{}), (\E_{Y}^{}, Q_{Y}^{}, \delta_{Y}^{}) \in \mathbb{J}(A)$.
Suppose that we have...
\begin{enumerate}
\item
$T \in \mathbb{L}(\E_{X}^{}, \E_{Y}^{})$
satisfying $T(\dom (\delta_{X}^{}) ) \subset \dom(\delta_{Y}^{})$, $T \delta_{X}^{} = \delta_{Y}^{} T$
and $T$ induces an isomorphism $\map{[T]} {\Ker(\delta_{X}^{})/ \Im(\delta_{X}^{})} {\Ker(\delta_{Y}^{})/ \Im(\delta_{Y}^{})}$;
\item
$\phi \in \mathbb{L}(\E_{X}^{})$ satisfying $\phi (\dom (\delta_{X}^{}) ) \subset \dom(\delta_{X}^{})$
and $1-T_{}^{\prime}T = \delta_{X}^{}\phi + \phi \delta_{X}^{}$;
\item
$\ep \in \mathbb{L}(\E_{X}^{})$
satisfying $\ep_{}^{2}=1$, $\ep_{}' = \ep$, $\ep \delta_{X}^{} = - \delta_{X}^{} \ep$
and
$\ep (1-T_{}^{\prime}T) = (1-T_{}^{\prime}T) \ep$.
\end{enumerate}
Then, $\Psi(\E_{X}^{}, Q_{X}^{}, \delta_{X}^{}) = \Psi(\E_{Y}^{}, Q_{Y}^{}, \delta_{Y}^{}) \in K_{0}(A)$.
\end{lem}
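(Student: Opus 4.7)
My plan is to adapt the classical Hilsum--Skandalis mapping-cone strategy: construct an auxiliary triple $(\tilde{\E}, \tilde{Q}, \tilde{\delta}) \in \mathbb{J}(A)$ from the given data whose class $\Psi(\tilde{\E}, \tilde{Q}, \tilde{\delta})$ realizes the difference $\Psi(\E_X, Q_X, \delta_X) - \Psi(\E_Y, Q_Y, \delta_Y)$ in $K_0(A)$, and then show this class vanishes via Lemma \ref{LemKerEqIm}.

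Concretely I would take $\tilde{\E}$ to be a direct sum built from $\E_X$ and $\E_Y$ (in the simplest case $\E_X \oplus \E_Y$, possibly doubled to accommodate the grading supplied by $\ep$), with quadratic form of the form $Q_X \oplus (-Q_Y)$, and a differential of block shape
\[
\tilde{\delta} = \begin{pmatrix} \delta_X & 0 \\ T & -\delta_Y \end{pmatrix}.
\]
The intertwining $T\delta_X = \delta_Y T$ from hypothesis (1) gives $\tilde{\delta}^2 = 0$; condition (1) of Definition \ref{DfnJA} for $\tilde{\delta}$ follows from the dual identity $T^{\prime} \delta_Y = \delta_X T^{\prime}$ obtained by taking $Q$-adjoints; and a compact parametrix for $\tilde{\delta}$ can be assembled out of the given $\sigma_X, \tau_X, \sigma_Y, \tau_Y$ together with the chain-homotopy datum $\phi$ to absorb the $T$ off-diagonal. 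The standard direct-sum behavior of $\Psi$ then yields the desired identification of classes.

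For the second step, given a cocycle $(x, y) \in \tilde{\E}$ with $\delta_X x = 0$ and $Tx = \delta_Y y$, I would use the chain-homotopy relation $1 - T^{\prime}T = \delta_X \phi + \phi \delta_X$ to write $x = \delta_X(\phi x) + T^{\prime} \delta_Y y$, and combine this with the fact that $T$ induces an isomorphism on cohomology (hypothesis (1)) to explicitly produce a primitive for $(x, y)$ in $\Im(\tilde{\delta})$. The involution $\ep$ enters here to reconcile the $Q$-grading on $\E_X$ coming from Lemma \ref{LemAssociatedScalarProd} with the chain-homotopy data, guaranteeing the identity $\tilde{\delta}^{\prime} = -\tilde{\delta}$ and that the primitive respects the grading; its commutation with $1 - T^{\prime}T$ is exactly what is needed for the decomposition to proceed compatibly on both $\ep$-eigenspaces.

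The main obstacle I anticipate is essentially combinatorial: pinning down the correct signs, gradings, and possibly extra direct summands so that all three conditions of Definition \ref{DfnJA} simultaneously hold for $\tilde{\delta}$, and so that $\Psi(\tilde{\E}, \tilde{Q}, \tilde{\delta})$ genuinely equals the difference (and not, say, a sum) of the two classes. The compatibilities imposed on $\ep$ --- namely $\ep^{2} = 1$, $\ep^{\prime} = \ep$, $\ep \delta_X = -\delta_X \ep$, and $\ep(1 - T^{\prime}T) = (1 - T^{\prime}T)\ep$ --- are precisely the data that make $\ep$ supply a $\Z/2\Z$-grading on $\E_X$ compatible both with $Q_X$ and with the chain homotopy $\phi$, and verifying this compatibility carefully is where I expect most of the work to lie.
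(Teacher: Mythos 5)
Your overall blueprint (reduce to a single triple whose class is the difference, then show it vanishes via Lemma~\ref{LemKerEqIm}) matches the paper, but there is a concrete gap that your plan does not address and that, in fact, accounts for essentially all of the technical content of the proof. With $Q = Q_X \oplus (-Q_Y)$, the $Q$-adjoint of your proposed cone differential $\tilde\delta = \bigl(\begin{smallmatrix}\delta_X & 0\\ T & -\delta_Y\end{smallmatrix}\bigr)$ is $\bigl(\begin{smallmatrix}-\delta_X & -T'\\ 0 & \delta_Y\end{smallmatrix}\bigr)$, which is \emph{not} $-\tilde\delta$ unless $T=0$: your claim that ``condition (1) of Definition~\ref{DfnJA} follows from the dual identity $T'\delta_Y = \delta_X T'$'' is false. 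The symmetrized variant $\bigl(\begin{smallmatrix}\delta_X & T'\\ T & -\delta_Y\end{smallmatrix}\bigr)$ is skew-adjoint but fails $\tilde\delta^2 = 0$. So one cannot keep $Q$ fixed and simply perturb the differential; the two requirements $\delta'=-\delta$ and $\delta^2=0$ are in tension for any off-diagonal perturbation, and resolving this tension is the heart of the lemma.

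What the paper actually does is keep a square-zero differential $\nabla_t = \bigl(\begin{smallmatrix}\delta_X & tT'\\ 0 & -\delta_Y\end{smallmatrix}\bigr)$ (the $T'$ sits in the upper-right, dual to your $T$) and \emph{deform the quadratic form} along a two-stage path: first by conjugation $B_t(\nu,\xi)=Q(R_t\nu,R_t\xi)$ with $R_t = \bigl(\begin{smallmatrix}1 & 0\\ itT\ep & 1\end{smallmatrix}\bigr)$ (this leaves the $\Psi$-class unchanged), and then by the non-conjugation deformation $C_t(\nu,\xi)=Q(L_t\nu,\xi)$ with $L_t = \bigl(\begin{smallmatrix}1-T'T & (i\ep+t\phi)T'\\ T(i\ep+t\phi) & 1\end{smallmatrix}\bigr)$. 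The identity $R_1'R_1 = L_0$ uses $\ep^2=1$ and $\ep(1-T'T)=(1-T'T)\ep$ to glue the two stages, and the skew-adjointness of $\nabla_t$ with respect to $C_t$ (i.e.\ $L_t\nabla_t = -\nabla_t'L_t$) uses $\ep\delta_X = -\delta_X\ep$ together with the chain-homotopy $1-T'T = \delta_X\phi+\phi\delta_X$. In other words, $\ep$ and $\phi$ are not bookkeeping devices reconciling a grading with the chain homotopy; they are the very coefficients of the deformed quadratic form without which the perturbed differential cannot be made skew-adjoint. Only then does one compute $\Ker(\nabla_t)=\Im(\nabla_t)$ for small $t>0$ as in your second step. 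You should rework the proposal to make the quadratic form, not (only) the differential, the object that moves.
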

\begin{proof}
First, we may assume that $\phi_{}' = -\phi$.
Indeed, since $1- T_{}' T = (1- T_{}' T)_{}' = (\delta_{X}^{}\phi + \phi \delta_{X}^{})_{}' 
= - (\delta_{X}^{}\phi_{}' + \phi_{}' \delta_{X}^{})$,
we may replace $\phi$ by $\half (\phi - \phi_{}')$ which satisfies the same assumption.

Set $\E := \E_{X}^{} \oplus \E_{Y}^{} $, $Q:=Q_{X}^{} \oplus (-Q_{Y}^{})$ and 
$\nabla := \bmatrixtwo{\delta_{X}^{}}{0}{0}{-\delta_{Y}^{}}$.
Note that the replacing of $Q_{Y}^{}$ by $-Q_{Y}^{}$ means the reversing of the grading of $\E_{Y}^{}$.
Then it is easy to see that 
$\Psi(\E, Q, \nabla) = \Psi(\E_{X}^{}, Q_{X}^{}, \delta_{X}^{}) - \Psi(\E_{Y}^{}, Q_{Y}^{}, \delta_{Y}^{})$.
Therefore it is sufficient to verify that $\Psi(\E, Q, \nabla) = 0$.

Let us introduce invertible operators $R_{t}^{} \in \mathbb{L}(\E)$ 
and a quadratic form $B_{t}^{}$ on $\E$ given by the formula:
\begin{align*}
R_{t}^{} := \bmatrixtwo{1}{0}{itT\ep}{1} \qqand
B_{t}^{} (\nu,\ \xi):= Q(R_{t}^{} \nu,\ R_{t}^{} \xi) = Q(R_{t}^{\prime} R_{t}^{} \nu,\ \xi)
\end{align*}
for $t \in [0,1]$. We claim that $(\E, B_{t}^{}, \nabla) \in \mathbb{J}(A)$.

It is easy to see that $\nabla R_{t}^{} = R_{t}^{} \nabla$, and hence,
$
B_{t}^{} (\nu, \nabla \xi) 
= B_{t}^{} (-\nabla \nu, \xi).
$
Clearly the scalar products associated to $B_{t}^{}$ and $Q$ are compatible with each other,
also the condition (2) and (3) in the definition of $\mathbb{J}(A)$ are satisfied.
Therefore $(\E, B_{t}^{}, \nabla) \in \mathbb{J}(A)$ and $\Psi (\E, B_{t}^{}, \nabla) = \Psi (\E, Q, \nabla)$.

Next let us introduce 
\begin{align*}
L_{t}^{} := \bmatrixtwo{1-T_{}^{\prime}T}{(i\ep + t\phi)T_{}^{\prime}}{T(i\ep + t\phi)}{1} \qqand
C_{t}^{} (\nu,\ \xi) := Q(L_{t}^{} \nu,\ \xi).
\end{align*}
Notice that since $Q=Q_{X}^{} \oplus (-Q_{Y}^{})$ and that $T_{}^{\prime}$ denotes the adjoint of $T$
with respect to $Q_{X}^{}$ and $Q_{Y}^{}$, the adjoint of the matrix 
$\bmatrixtwo{0}{0}{T}{0}$ with respect to $Q$ is equal to $\bmatrixtwo{0}{-T_{}^{\prime}}{0}{0}$.
Thus we have that $R_{t}^{\prime} = \bmatrixtwo{1}{it\ep T_{}^{\prime}}{0}{1}$
and that
\begin{align*}
R_{1}^{\prime} R_{1}^{}  = \bmatrixtwo{1-\ep T_{}^{\prime} T \ep}{i\ep T_{}^{\prime}}{iT\ep}{1}
=\bmatrixtwo{\ep(1- T_{}^{\prime} T)\ep}{i\ep T_{}^{\prime}}{iT\ep}{1}
=\bmatrixtwo{(1- T_{}^{\prime} T)\ep_{}^{2}}{i\ep T_{}^{\prime}}{iT\ep}{1}
=L_{0}^{}.
\end{align*}
In particular, $B_{1}^{}= C_{0}^{}$.
Since $L_{t}^{}$ is invertible at $t=0$, there exists $t_{0}^{}>0$ such that 
$L_{t}^{}$ is invertible for $t \in [0, t_{0}^{}]$. 
Besides it is clear that $L_{t}^{\prime} = L_{t}^{}$, 
so $C_{t}^{}$ is a regular quadratic form for $t\in[0, t_{0}^{}]$.

Moreover consider the operator $\nabla_{t}^{}:= \bmatrixtwo{\delta_{X}^{}}{tT_{}^{\prime}}{0}{-\delta_{Y}^{}}$,
and we claim that $(\E, C_{t}^{}, \nabla_{t}^{}) \in \mathbb{J}(A)$.
for $t\in[0, t_{0}^{}]$.
The adjoint of $\nabla_{t}^{}$ with respect to the quadratic form $ C_{t}^{}$
is equal to $L_{t}^{-1} \nabla_{t}^{\prime} L_{t}^{}$ 
so in order to check that it is equal to $-\nabla_{t}^{}$,
we should check that $ L_{t}^{} \nabla_{t}^{} = -\nabla_{t}^{\prime} L_{t}^{}$.

\begin{align*}
L_{t}^{} \nabla_{t}^{}
&=
\bmatrixtwo
{(1-T_{}^{\prime}T)\delta_{X}^{} } 
{t(1-T_{}^{\prime}T)T_{}^{\prime}-(i\ep + t\phi)T_{}^{\prime}\delta_{Y}^{}}
{T(i\ep + t\phi)\delta_{X}^{}} 
{tT(i\ep + t\phi)T_{}^{\prime}-\delta_{Y}^{}}
\\
\nabla_{t}^{\prime} L_{t}^{}
&=
\bmatrixtwo
{-\delta_{X}^{} (1-T_{}^{\prime}T)} 
{-\delta_{X}^{}(i\ep + t\phi)T_{}^{\prime}}
{-tT(1-T_{}^{\prime}T)-\delta_{Y}^{}T(i\ep + t\phi)}
{-tT(i\ep + t\phi)T_{}^{\prime}+\delta_{Y}^{}}
\end{align*}
Obviously the (1,1) and (2,2)-entries are the negative of each other. Besides we can see that
\begin{align*}
[\text{(1,2)-entry of } L_{t}^{} \nabla_{t}^{} ]
&= t(\delta_{X}^{} \phi +\phi \delta_{X}^{})T_{}^{\prime}-(i\ep + t\phi)\delta_{X}^{}T_{}^{\prime}
\\  &= t \delta_{X}^{} \phi T_{}^{\prime}- i\ep \delta_{X}^{}T_{}^{\prime} 
\\  & = \delta_{X}^{} (i\ep + t \phi) T_{}^{\prime}
 =- [\text{(1,2)-entry of } \nabla_{t}^{\prime} L_{t}^{}]. 
\end{align*}
Since $(L_{t}^{} \nabla_{t}^{})_{}^{\prime} = \nabla_{t}^{\prime} L_{t}^{}$, 
it automatically follows that 
$[\text{(2,1)-entry of } L_{t}^{} \nabla_{t}^{} ]
=- [\text{(2,1)-entry of } \nabla_{t}^{\prime} L_{t}^{}]$ as well,
and now we obtained that $ L_{t}^{} \nabla_{t}^{} = -\nabla_{t}^{\prime} L_{t}^{}$.
It is easy to see that $(\nabla_{t}^{})^{2}=0$.
If $\sigma_{X}^{}, \tau_{X}^{} \in \mathbb{K}(\E_{X}^{})$ and 
$\sigma_{Y}^{}, \tau_{Y}^{} \in \mathbb{K}(\E_{Y}^{})$
satisfy
$\sigma_{X}^{} \delta_{X}^{} + \delta_{X}^{} \tau_{X}^{} - 1 \in \mathbb{K}(\E_{X}^{})$
and
$\sigma_{Y}^{} \delta_{Y}^{} + \delta_{Y}^{} \tau_{Y}^{} - 1 \in \mathbb{K}(\E_{Y}^{})$,
then it follows that
$\brackets{ \smatrixtwo{\sigma_{X}^{}}{0}{0}{-\sigma_{Y}^{}} } \nabla_{t}^{} + 
\nabla_{t}^{} \brackets{ \smatrixtwo{\tau_{X}^{}}{0}{0}{-\tau_{Y}^{}} } -1\in \mathbb{K}(\E)$
since $T \in \mathbb{L}(\E_{X}^{}, \E_{Y}^{})$.
Thus we obtained that $(\E, C_{t}^{}, \nabla_{t}^{}) \in \mathbb{J}(\E)$
and $\Psi (\E, C_{t}^{}, \nabla_{t}^{}) = \Psi (\E, B_{1}^{}, \nabla) = \Psi(\E, Q, \nabla)$.

Finally check that $\Ker (\nabla_{t}^{}) = \Im (\nabla_{t}^{})$
for any $t \in (0, t_{0}^{}]$.
$\Ker (\nabla_{t}^{}) \supset \Im (\nabla_{t}^{})$ 
is implied by $(\nabla_{t}^{})^{2}=0$, 
so let $\begin{bmatrix}{\theta_{1}^{}}\\{\theta_{2}^{}}\end{bmatrix} \in \Ker (\nabla_{t}^{})$. 
Then $\theta_{2}^{} \in \Ker (\delta_{Y}^{})$ and $tT_{}^{\prime}\theta_{2}^{} = -\delta_{X}^{}\theta_{1}^{} \in \Im (\delta_{X}^{})$.
Since $T_{}^{\prime}$ induces an isomorphism 
$[T_{}^{\prime}] \colon \Ker (\delta_{Y}^{})/ \Im (\delta_{Y}^{}) \to \Ker (\delta_{X}^{})/ \Im (\delta_{X}^{})$,
it follows from the injectivity that
$\theta_{2}^{} \in \Im (\delta_{Y}^{}) $. There exists $\eta \in \E_{2}^{}$ such that $\delta_{Y}^{} \eta = \theta_{2}^{}$.
On the other hand, 
$\theta_{1}^{} + tT_{}^{\prime} \eta \in \Ker (\delta_{X}^{})$ and the surjectivity of $[T_{}^{\prime}]$ imply that
 there exists $\zeta \in \Ker (\delta_{Y}^{})$ such that $T_{}^{\prime} \zeta = \frac{1}{t}(\theta_{1}^{} + tT_{}^{\prime} \eta)$.
Therefore
$\Im (\nabla_{t}^{}) \ni \nabla_{t}^{} \begin{bmatrix}{0}\\{\zeta-\eta}\end{bmatrix}=
\begin{bmatrix}{tT_{}^{\prime}(\zeta-\eta)}\\{-\delta_{Y}^{}(\eta)}\end{bmatrix}=
\begin{bmatrix}{\theta_{1}^{}}\\{\theta_{2}^{}}\end{bmatrix}$,
which concludes that $\Ker (\nabla_{t}^{}) \subset \Im (\nabla_{t}^{})$.

Due to Lemma \ref{LemKerEqIm}, it follows that 
$\Psi (\E, C_{t}^{}, \nabla_{t}^{}) = 0 \in K{\!}K(\C, A)$ and we conclude that
$\Psi(\E_{X}^{}, Q_{X}^{}, \delta_{X}^{}) - \Psi(\E_{Y}^{}, Q_{Y}^{}, \delta_{Y}^{}) = \Psi(\E, Q, \nabla) = 0$.
\end{proof}

\section{$G$-signature}
\subsection{Description of the Analytic $G$-index}
\settingsGXa
And let $\mathbb{V}$ be a $G$-Hermitian vector bundle over $X$.
In this section, we will define and investigate a $C^{*}(G)$-module denoted by $\E(\mathbb{V})$
obtained by completing $C_{c}^{}(X;\mathbb{V})$.
This will be used for the definition of the index of $G$-invariant elliptic operators,
in particular, the signature operator.
\begin{dfn}{\upshape \cite[Section 5]{Kas16}}
First we define on $C_{c}^{}(X; \mathbb{V})$ the structure of a pre-Hilbert module
over $C_{c}^{}(G)$ using the action of ${G}$ on $C_{c}^{}(X; \mathbb{V})$
given by $\gm[s](x) = \gm(s(\gm_{}^{-1} x))$ for $\gm \in {G}$.
\begin{itemize}
\item
The action of $C_{c}^{}(G)$ on $C_{c}^{}(X; \mathbb{V})$ from the right
is given by
\begin{eqnarray}
\label{ActionOnCalE}
s\cdot b = \int_{{G}} \gm[s] \cdot b(\gm_{}^{-1}) \Delta(\gm)^{-\half} \dif \gm
\in C_{c}^{}(X; \mathbb{V})
\end{eqnarray}
for $s\in C_{c}^{}(X; \mathbb{V})$ and $b\in C_{c}^{}(G)$.
Here, $\Delta$ denotes the modular function.
\item
The scalar product valued in $C_{c}^{}(G)$ is given by
\begin{eqnarray}
\label{ScalarProdOnCalE}
\angles{s_1,s_2}_{\E}^{}(\gm) = \Delta(\gm)^{-\half} \angles{s_1, \gm[s_2]}_{L^{2}(\mathbb{V})}^{}
\end{eqnarray}
for $s_i\in C_c(\mathbb{V})$.
\end{itemize}
Define $\E(\mathbb{V})$ as the completion of $C_c(\mathbb{V})$
in the norm $\norm{\angles{s,s}}_{C^{*}({G})}^{\half}$.
\end{dfn}

\begin{thm}{\upshape \cite[Theorem 5.8]{Kas16}}
\label{ThmKasparovEllipInd}
\settingsGXa
Let
$\map{D} {C_{c}^{\infty}(X;\mathbb{V})}{C_{c}^{\infty}(X;\mathbb{V})}$
be a formally self-adjoint $G$-invariant first-order
elliptic operator on a $G$-Hermitian vector bundle $\mathbb{V}$.
Then both operators $D \pm i$ have dense range as operators
on $\E(\mathbb{V})$ and $(D \pm i)_{}^{-1}$ belong to $\mathbb{K}(\E(\mathbb{V}))$.
The operator $D(1 + D^{2})_{}^{-1/2} \in \mathbb{L}(\E(\mathbb{V}))$
is a Fredholm and determines an element
$\ind_{G}^{}(D) \in 
K_{0}^{}(C^{*}(G))$.
\QED
\end{thm}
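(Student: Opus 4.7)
The plan is to verify three properties of $D$ acting on the Hilbert $C^*(G)$-module $\E(\mathbb{V})$: essential self-adjointness (giving dense range for $D \pm i$), compactness of the resolvents $(D \pm i)_{}^{-1}$ in $\mathbb{K}(\E(\mathbb{V}))$, and Fredholmness of the bounded transform $F_{D}^{} := D(1+D_{}^{2})_{}^{-1/2}$. Once the resolvent is compact, the pair $(\E(\mathbb{V}), F_{D}^{})$ is automatically a Kasparov cycle and produces the $K$-theory class.

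For essential self-adjointness I would exploit the completeness of $X$ together with first-order ellipticity of $D$. By Corollary \ref{CorSliceThm}, $X$ has bounded geometry, so one can build smooth compactly supported cutoff functions $\chi_{n}^{}$ with uniformly bounded gradients that exhaust $X$; the standard commutator argument $[D, \chi_{n}^{}] = \sgn(\cdot)\dif \chi_{n}^{}$ then shows $C_{c}^{\infty}(X;\mathbb{V})$ is a core for the closure of $D$, hence $D \pm i$ have dense range on $L_{}^{2}(X;\mathbb{V})$. Since the $C^{*}(G)$-valued inner product on $\E(\mathbb{V})$ is assembled from $L_{}^{2}$-pairings of $G$-translates (formula \eqref{ScalarProdOnCalE}) and $D$ is $G$-invariant, density in $L_{}^{2}$ implies density of $(D \pm i)(C_{c}^{\infty}(X;\mathbb{V}))$ inside $\E(\mathbb{V})$ in the Hilbert module norm.

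The main technical step is the compactness of the resolvents, and this is where properness and cocompactness enter crucially. The key preliminary fact is that multiplication $M_{f}^{}$ by any $f \in C_{c}^{}(X)$ is a compact $C^{*}(G)$-linear operator on $\E(\mathbb{V})$: using the cutoff function $c \in C_{c}^{}(X)$ with $\int_{G}^{} c(\gm_{}^{-1}x)\,\dif\gm = 1$ one can express $M_{f}^{}$ as an integral of rank-one operators $\lvert s \rangle\langle t \rvert$ of the form appearing in the definition of $\mathbb{K}(\E(\mathbb{V}))$. To bring $(D \pm i)_{}^{-1}$ into this form I would invoke finite propagation speed for the hyperbolic evolution $e_{}^{itD}$, valid since $X$ is complete and $D$ is first-order: approximate the resolvent in operator norm by functions $\phi(D)$ where $\widehat{\phi}$ has compact support, so that $\phi(D)$ has a compactly supported propagation kernel. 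Writing $\phi(D) = \int_{G}^{} \gm[M_{c}^{} \phi(D)]\,\dif\gm$ using $G$-invariance and the unit-partition property of $c$, and exploiting that $M_{c}^{} \phi(D)$ has a compactly supported kernel, one concludes that $\phi(D) \in \mathbb{K}(\E(\mathbb{V}))$; passing to the norm limit gives $(D \pm i)_{}^{-1} \in \mathbb{K}(\E(\mathbb{V}))$.

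Granting these two ingredients, the bounded transform $F_{D}^{}$ is self-adjoint with $1 - F_{D}^{2} = (1 + D_{}^{2})_{}^{-1} \in \mathbb{K}(\E(\mathbb{V}))$, so $(\E(\mathbb{V}), F_{D}^{})$ is a cycle in $K{\!}K(\C, C^{*}(G))$; this class defines $\ind_{G}^{}(D) \in K_{0}^{}(C^{*}(G))$ under the Kasparov isomorphism (with $\mathbb{V}$ implicitly $\Z/2$-graded in the even case). The principal obstacle is the compactness step, which forces one to combine three separate pieces of analysis — local elliptic regularity on $X$, finite propagation speed of the wave group, and the identification of $C_{c}^{}(X)$-multipliers with rank-one operators in the Hilbert module structure over $C^{*}(G)$ — each of which relies essentially on the properness and $G$-compactness of the action.
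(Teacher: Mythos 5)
The paper does not prove this theorem; it is cited from Kasparov \cite[Theorem 5.8]{Kas16} (note the boxed \QED that closes the statement with no accompanying \texttt{proof} environment). There is therefore no ``paper proof'' to compare against. Your sketch does line up with the broad strategy behind Kasparov's argument — self-adjointness from completeness, local compactness of the resolvent via finite propagation speed and the fact that multiplication by $C_{c}^{}(X)$ acts by compacts on $\E(\mathbb{V})$, then the Fredholm property of the bounded transform — and the lemmas reproduced in the surrounding section (Lemma \ref{LemCoMo}, Lemma \ref{LemPositiveScalarProdOnE}, Proposition \ref{PropBoundedOpOnEV}) are exactly the infrastructure one needs.

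That said, there is a real gap in the essential self-adjointness step. You argue that $(D \pm i)(C_{c}^{\infty}(X;\mathbb{V}))$ is dense in $L_{}^{2}(X;\mathbb{V})$ and then assert this gives density in $\E(\mathbb{V})$. This implication does not follow: the Hilbert module norm $\Vert s\Vert_{\E} = \Vert\langle s,s\rangle_{\E}\Vert_{C^{*}(G)}^{1/2}$ is not comparable to the $L^{2}$-norm in either direction, and $\E(\mathbb{V})$ is not a quotient of $L^{2}$ in a way that would transport density. What is actually required is that $D$ be a \emph{regular} self-adjoint unbounded operator on the Hilbert $C^{*}(G)$-module $\E(\mathbb{V})$, and this has to be established directly at the module level. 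The mechanism in the source is close to what Proposition \ref{PropBoundedOpOnEV} accomplishes for bounded operators: one produces $(D \pm i)_{}^{-1}$ as a properly supported $G$-invariant $L^{2}$-bounded operator (e.g.\ a pseudodifferential parametrix plus a compact correction), verifies via the averaging estimate of Lemma \ref{LemCoMo} that it extends to $\mathbb{L}(\E(\mathbb{V}))$, and only then concludes surjectivity of $D \pm i$ on $\E(\mathbb{V})$. The compactness argument you outline — finite propagation plus unit-partition averaging — is the right idea for the compactness step, but without the module-level regularity it cannot be used to bootstrap the dense-range claim in the way your sketch suggests.
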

In this paper, mainly we consider 
$\mathbb{V}$ as $\turnV{*} T^{*}X$ equipped with the $\Z/2\Z$-grading
given by the Hodge $\ast$-operation and $D$ as a signature operator.

\begin{dfn}
Let $X$ and $Y$ be proper and co-compact Riemannian $G$-manifolds
and let $\mathbb{V}$ and $\mathbb{W}$ be $G$-Hermitian vector bundles over $X$ and $Y$
respectively.
Let $\map{T} {C_{c}^{\infty}(X;\mathbb{V})} {C_{}^{}(Y;\mathbb{W})}$ be a linear operator.
The support of the distributional kernel of $T$ is given by the closure of the complement of
the following union of all subsets $K_{X}^{} \times K_{Y}^{} \subset X \times Y$;
$$
\bigcup_{\substack
{\angles{Ts_{1}^{},\ s_{2}^{}} = 0 \text{ for any sections }
\\
s_{1}^{} \in C_{c}^{}(X; \mathbb{V}) \text{ and }s_{2}^{} \in C_{c}^{}(Y; \mathbb{W}) \text{ satisfying } 
\\
\supp(s_{1}^{}) \subset K_{X}^{},\ \supp(s_{2}^{}) \subset K_{Y}^{}
}}
K_{X}^{} \times K_{Y}^{}.
$$

$T$ is said to be properly supported if both 
$$
\supp(k_{T}^{}) \cup (K_{X}^{} \times Y) \qqand \supp(k_{T}^{}) \cup (X \times K_{Y}^{})
\quad \subset X \times Y
$$
are compact for any compact subset $K_{X}^{} \subset X$ and $K_{Y}^{} \subset Y$.

$T$ is said to be compactly supported if 
$
\supp(k_{T}^{}) \subset X \times Y
$
is compact.
\end{dfn}
The following proposition is used for the construction of the bounded operators
on $\E(\mathbb{V})$.
\begin{prop}{\upshape \cite[Proposition 5.4]{Kas16}}
\label{PropBoundedOpOnEV}
Let $G$, $X$, $Y$, $\mathbb{V}$ and $\mathbb{W}$ be as above.
Let $\map{T} {C_{c}^{}(X;\mathbb{V})} {C_{c}^{}(Y;\mathbb{W})}$
be a properly supported $G$-invariant operator which is $L^{2}$-bounded. 
Then $T$ defines an element of $\mathbb{L}(\E(\mathbb{V}) , \E(\mathbb{W}))$.
\end{prop}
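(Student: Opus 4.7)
My plan is to extend $T$ by continuity from the dense subspace $C_{c}^{}(X; \mathbb{V}) \subset \E(\mathbb{V})$, in three steps: check that the formal action of $T$ respects the $C_{c}^{}(G)$-module structure; establish the crucial $C^{*}(G)$-norm estimate $\angles{Ts, Ts}_{\E(\mathbb{W})}^{} \leq \|T\|_{L^{2}}^{2}\, \angles{s, s}_{\E(\mathbb{V})}^{}$; then produce the adjoint by running the same argument on the formal $L^{2}$-adjoint $T_{}^{*}$.

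The first step should be essentially routine. Proper support of $T$ ensures $Ts \in C_{c}^{}(Y; \mathbb{W})$ whenever $s \in C_{c}^{}(X; \mathbb{V})$, since the projection of $\supp(k_{T}^{}) \cap (\supp(s) \times Y)$ onto $Y$ is compact and contains $\supp(Ts)$. The $G$-equivariance relation $T(\gm[s]) = \gm[Ts]$, together with the defining formula (\ref{ActionOnCalE}), yields $T(s \cdot b) = (Ts) \cdot b$ for $b \in C_{c}^{}(G)$ after pulling $T$ inside the integral, which is legitimate because $T$ is $L^{2}$-bounded and the integrand is norm-continuous and compactly supported in $\gm$.

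The main obstacle is the norm estimate. The starting point, using $G$-invariance and the $L^{2}$-adjoint, is the identity $\angles{Ts, Ts}_{\E}^{}(\gm) = \Delta(\gm)^{-\half}\angles{Ts,\ T\gm[s]}_{L^{2}}^{} = \angles{T_{}^{*}Ts,\ s}_{\E}^{}(\gm)$; it therefore suffices to prove $\angles{As, s}_{\E}^{} \geq 0$ in $C^{*}(G)$ for the positive $G$-invariant $L^{2}$-operator $A := \|T\|_{L^{2}}^{2}\,\id - T_{}^{*}T \geq 0$. I would test this against an arbitrary state $\phi$ of $C^{*}(G)$, carried by a continuous positive-type function on $G$ via GNS, and rewrite
\begin{equation*}
\phi\!\parens{\angles{As, s}_{\E}^{}} = \int_{G} \phi(\gm)\, \Delta(\gm)^{-\half} \angles{A_{}^{\half}s,\ \gm[A_{}^{\half}s]}_{L^{2}}^{} \dif \gm,
\end{equation*}
using the $G$-invariance of $A_{}^{\half}$. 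The right-hand side presents the quantity as the state $\phi$ paired with a function on $G$ of the same form as a non-negative $\E$-inner product, hence non-negative. The delicate point is that $A_{}^{\half}$ is generally not properly supported, so $A_{}^{\half}s$ lies only in $L^{2}$ rather than $C_{c}^{}$; I expect to resolve this either by approximating $A_{}^{\half}$ by properly supported $G$-invariant operators (constructed by a smooth partition-of-unity-type cutoff in the kernel) and passing to the limit in $C^{*}(G)$, or by extending $\angles{\cdot,\cdot}_{\E}^{}$ to pairings between $L^{2}$-sections and compactly supported ones for which the resulting function on $G$ remains in $C^{*}(G)$.

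Once boundedness is established, the extension of $T$ to an element $L_{T}^{} \in \mathbb{L}(\E(\mathbb{V}), \E(\mathbb{W}))$ is automatic from the universal property of the Hilbert-module completion. The formal $L^{2}$-adjoint $T_{}^{*}$ satisfies exactly the same hypotheses (proper support, $G$-invariance, and $L^{2}$-boundedness), so the same argument yields $L_{T_{}^{*}}^{} \in \mathbb{L}(\E(\mathbb{W}), \E(\mathbb{V}))$; a one-line calculation with (\ref{ScalarProdOnCalE}) identifies $L_{T_{}^{*}}^{}$ as the $\E$-adjoint of $L_{T}^{}$, completing the proof.
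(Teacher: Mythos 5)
Your setup is correct and you have correctly isolated the heart of the matter: one must show $\angles{Ts,Ts}_{\E} \leq C\angles{s,s}_{\E}$ as a positivity statement in $C^{*}(G)$, and the obstruction is that the natural operator whose square root you would like to take (your $A = \|T\|^{2}\,\mathrm{id} - T^{*}T$) is $G$-invariant but \emph{not} compactly or even properly supported, so $A^{1/2}s$ leaves $C_{c}$ and the pairing you want to call positive is not a priori a well-defined $\E$-inner product. You flag this yourself, but your two proposed remedies are not carried out and I do not believe either closes directly: cutting off the kernel of $A^{1/2}$ destroys both $G$-invariance and positivity, with no evident way to control the error in the $C^{*}(G)$-norm; and extending the pairing to $L^{2}\times C_{c}$ begs the question, since the resulting function $\gm\mapsto \Delta(\gm)^{-1/2}\angles{A^{1/2}s,\gm[A^{1/2}s]}_{L^{2}}$ need not lie in $C^{*}(G)$ when $A^{1/2}s$ has noncompact support. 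So there is a genuine gap at exactly the step you identify as delicate.

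The paper closes this gap with a specific localization trick that your proposal lacks. Using the cut-off function $c$, set $T_{1} := \tfrac{1}{2}(cT^{*}T + T^{*}Tc)$; because $T$ (hence $T^{*}T$) is properly supported, $T_{1}$ has \emph{compactly} supported kernel, and $\int_{G}\gm[T_{1}]\,\dif\gm = T^{*}T$ since $\int_{G}\gm[c]\,\dif\gm = 1$. One then forms the compactly supported \emph{positive} operator $P := c_{1}(\|T\|^{2}\|c\| - T_{1})c_{1}$ with $c_{1}\in C_{c}(X;[0,1])$ equal to $1$ on the support of $T_{1}$, and applies Lemma~\ref{LemPositiveScalarProdOnE}, which proves positivity of $(s_{1},s_{2})\mapsto \angles{s_{1},(\int_{G}\gm[P])s_{2}}_{\E}$ precisely because the compact support of $P$ forces $\sqrt{P}(\gm[s])$ to vanish for $\gm$ outside a compact set — exactly the property that fails for your global $A^{1/2}$. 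The GNS/positive-type-function computation you sketch is in fact the mechanism inside Lemma~\ref{LemPositiveScalarProdOnE}, but it only goes through after the problem has been reduced to a compactly supported positive operator; that reduction via $T_{1}$ and $c_{1}$ is the missing idea.
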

For the proof, we will use the following Lemma \ref{LemCoMo} and 
Lemma \ref{LemPositiveScalarProdOnE}.
\begin{lem}
\label{LemCoMo}
Let $P\in \mathbb{L}(L^{2}(X;\mathbb{V}),L^{2}(Y;\mathbb{W}))$ be 
a compactly supported bounded operator.
Then the operator
$$
\widetilde{P}:=\int_{G} \gm[P] \:\dif \gm
$$ 
is well defined as
a bounded operator in $\mathbb{L}(L^{2}(X;\mathbb{V}),L^{2}(Y;\mathbb{W}))$
and the inequation
$\norm{\widetilde{P}}_{\mathrm{op}} \leq C \norm{P}_{\mathrm{op}}$
holds,
where $C$ is a constant depending on its support.
\end{lem}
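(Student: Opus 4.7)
The approach is to combine properness of the $G$-action with $G$-compactness to reduce the apparently non-convergent integral $\int_G \gamma[P] \,d\gamma$ to an integral over a locally compact family, and then to obtain the norm bound via a Cauchy--Schwarz argument of Schur type. Throughout, since the Riemannian measures on $X$ and $Y$ are $G$-invariant, the representations $U_\gamma$ of $G$ on $L^2(X;\mathbb{V})$ and $L^2(Y;\mathbb{W})$ are unitary, and $\gamma[P] = U_\gamma^Y \, P \, U_{\gamma^{-1}}^X$ satisfies $\norm{\gamma[P]}_{\mathrm{op}} = \norm{P}_{\mathrm{op}}$.

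First, I would fix compact sets $K_X \subset X$ and $K_Y \subset Y$ with $\supp(k_P) \subset K_X \times K_Y$, so that $P$ factors as $P = M_{\chi_{K_Y}} P M_{\chi_{K_X}}$. Hence $\gamma[P]s$ is supported in $\gamma K_Y$ and depends only on $s|_{\gamma K_X}$. Next, choose compact sets $F_X \subset X$, $F_Y \subset Y$ with $G F_X = X$, $G F_Y = Y$ (existing by co-compactness). By applying the properness of the $G$-action on $X$ (respectively $Y$) to the compact set $F_X \times K_X \subset X \times X$ (respectively $F_Y \times K_Y \subset Y \times Y$), the set
\[
L_X := \braces{\gamma \in G : \gamma K_X \cap F_X \neq \emptyset}
\]
is compact, and a left-translation argument using left-invariance of Haar measure shows
\[
N_X := \sup_{x \in X} \int_G \chi_{\gamma K_X}(x) \, \dif \gm \leq \mathrm{meas}(L_X) < \infty,
\]
and similarly $N_Y < \infty$. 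This is the key bounded-multiplicity estimate.

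For convergence, given $s \in C_c(X;\mathbb{V})$ with $\supp(s) \subset F$ compact, $\gamma[P]s \neq 0$ forces $\gamma K_X \cap F \neq \emptyset$, and the set of such $\gamma$ is relatively compact by properness; so $\int_G \gamma[P] s\,\dif\gm$ is a finite integral of continuous sections. For the norm estimate, start from
\[
\norm{\widetilde{P} s}^2_{L^2(Y;\mathbb{W})} \leq \int_Y \Bigl( \int_G |(\gamma[P] s)(y)| \, \dif \gm \Bigr)^2 \dif y,
\]
insert $\chi_{\gamma K_Y}(y)$ next to $(\gamma[P]s)(y)$, apply Cauchy--Schwarz in $\gamma$ to extract a factor $N_Y$, and then use $\norm{\gamma[P]s}_{L^2(Y)} \leq \norm{P}_{\mathrm{op}} \norm{\chi_{\gamma K_X} s}_{L^2(X)}$ together with Fubini and the bound $N_X$ to conclude $\norm{\widetilde{P}}_{\mathrm{op}} \leq \sqrt{N_X N_Y}\,\norm{P}_{\mathrm{op}}$.

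The main obstacle will be the bounded-multiplicity estimate for $N_X$ and $N_Y$: one must carefully combine the properness of the map $(\gamma,x)\mapsto(x,\gamma x)$ with $G$-compactness and handle left- vs.\ right-invariance of Haar measure cleanly, especially since no unimodularity of $G$ is assumed. Everything else is routine Schur-type analysis once these uniform bounds are in hand.
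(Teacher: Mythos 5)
Your proof is correct, and it is essentially the same Schur-type argument as the paper's, differing only in how the Cauchy--Schwarz step is organized. On the $Y$-side, the paper works with the $L^2(Y;\mathbb{W})$-valued function $F_s(\gamma):=\gamma[P]s$ on $G$, observes that $F_s(\gamma)$ and $F_s(\eta)$ have disjoint supports unless $\gamma^{-1}\eta$ lies in a compact set $Z$, inserts $\chi_Z(\gamma^{-1}\eta)$ into the resulting double integral over $G\times G$, and applies Cauchy--Schwarz over $G$ to pick up the factor $|Z|$. You instead estimate pointwise over $y\in Y$: inserting $\chi_{\gamma K_Y}(y)$ and applying Cauchy--Schwarz in $\gamma$ yields the covering-multiplicity constant $N_Y=\sup_y \mathrm{meas}\set{\gamma\in G}{y\in\gamma K_Y}$. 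These two finite constants ($|Z|$ as ``width of the diagonal'' vs.\ $N_Y$ as ``bounded covering multiplicity'') are dual but not identical; both are finite for the same reason (properness plus co-compactness) and both give the bound. On the $X$-side the paper smooths, using a continuous cutoff $c_1\equiv 1$ on $K_X$ with $P=Pc_1$ and the constant $\sup_x\int_G|c_1(\gamma^{-1}x)|^2\,\dif\gamma$; you use the characteristic function $\chi_{K_X}$ directly and get $N_X$, which is of the same form as $N_Y$ and is controlled by exactly your left-translation reduction to a compact fundamental set $F_X$ together with left-invariance of the Haar measure --- the same mechanism by which the paper shows $x\mapsto\int_G|c_1(\gamma^{-1}x)|^2\,\dif\gamma$ is $G$-invariant and hence bounded. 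Your version has the advantage of symmetry (the final bound $\sqrt{N_XN_Y}\,\norm{P}_{\mathrm{op}}$ is transparent and the two constants are on the same footing), and it correctly uses only left-invariance, so no unimodularity is needed, matching the paper's generality.
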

\begin{proof}
Assume that the support of the
distributional kernel of $P$ is contained in $K_{X}^{} \times K_{Y}^{}$ for some
compact subsets $K_{X}^{} \subset X$ and $K_{Y}^{} \subset Y$.
We will follow the proof of \cite[Lemma 1.4--1.5]{Co-Mo82}.
Fix an arbitrary smooth section with compact support
$s\in C_c^{\infty} (X;\mathbb{V})$ and let us consider
$F_{s}^{} \in L^{2}\parens{G;L^{2}(Y;\mathbb{W})}$
given by
$$
F_{s}^{}(\gm) := \gm[P]s.
$$
Note that
for any $\gm \in G$
the support of the
distributional kernel of $\gm[P]$ is contained in $\gm(K_{X}^{})\times \gm(K_{Y}^{})$.
This is because for any $s\in C_c^{\infty}(X;\mathbb{V})$, it follows that 
$\supp(\gm[P]s) \subset \gm(K_{Y}^{})$ and  $\gm[P]s=0$
whenever $\supp(s) \cap \gm(K_{X}^{}) = \emptyset$.
In particular, since the actions are proper,
$F_{s}^{}$ has compact support in $G$.
In addition, again since
the actions are proper,
$\gm(K_{Y}^{}) \cap \eta(K_{Y}^{}) = \gm( K_{Y}^{} \cap \gminv \eta (K_{Y}^{}))
= \emptyset$
if $\gminv \eta \in G$ is outside some compact neighborhood
$Z \subset G$ in particular,
$$\norm{F_{s}^{}(\gm)}_{L^{2}(Y;\mathbb{W})}^{}
\cdot \norm{F_{s}^{}(\eta)}_{L^{2}(Y;\mathbb{W})}^{} = 0$$
for such $\gm$ and $\eta \in G$.
Remind that $Z$ is determined only by $K_{Y}^{}$
so independent of $s$.
Then,
\begin{eqnarray*}
\left\Vert
\int_{G}F_{s}^{}(\gm) \:\dif \gm
\right\Vert_{L^{2}(Y;\mathbb{W})}^{2}
&=&
\norm{
\int_{G}F_{s}^{}(\gm) \:\dif \gm
}_{L^{2}(Y;\mathbb{W})}^{}
\norm{
\int_{G}F_{s}^{}(\eta) \:\dif \eta
}_{L^{2}(Y;\mathbb{W})}^{}
\\
&\leq&
\int_{G}\int_{G}
\norm{F_{s}^{}(\gm)}_{L^{2}(Y;\mathbb{W})}^{} \norm{F_{s}^{}(\eta)}_{L^{2}(Y;\mathbb{W})}^{}
 \:\dif \gm  \:\dif \eta
\\
&\leq&
\int_{G} \norm{F_{s}^{}(\gm)}_{L^{2}(Y;\mathbb{W})}^{}
\parens{
\int_{G}
\chi_{Z}^{}(\gminv \eta) \norm{F_{s}^{}(\eta)}_{L^{2}(Y;\mathbb{W})}^{}
 \:\dif \eta } \dif \gm
\\
&\leq&
\norm{F_{s}^{}}_{L^{2}(G)}^{}
\norm{\chi_{Z}^{}}_{L^{2}(G)}^{}
\norm{F_{s}^{}}_{L^{2}(G)}^{}
\\
&\leq&
|Z|\norm{F_{s}^{}}_{L^{2}(G)}^{2},
\end{eqnarray*}
where $\map{\chi_{Z}^{}}{G}{[0,1]}$ is the characteristic function of $C$,
that is $\chi_{Z}^{}(\gm) = 1$ for $\gm \in Z$ and
$\chi_{Z}^{}(\gm) = 0$ for $\gm \notin Z$.

Next, take a compactly supported smooth function
$c_{1}^{} \in C_c^{\infty}(X;[0,1])$ such that $c_{1}^{} = 1$ on $K_{X}^{}$.
Noting that 
$P = P c_{1}^{}$, we obtain
\begin{eqnarray*}
\norm{F_{s}^{}}_{L^{2}(G)}^{2}
&=&
\int_{G}\norm{F_{s}^{}(\gm)}_{L^{2}(Y;\mathbb{W})}^{2} \:\dif \gm
=
\int_{G}\norm{
\gm P c_{1}^{} \gminv s
}_{L^{2}(Y;\mathbb{W})}^{2} \:\dif \gm
\\
&\leq&
\int_{G}\norm{P}_{\mathrm{op}}^{2}\norm{
c_{1}^{} \gminv s
}_{L^{2}(X;\mathbb{V})}^{2} \:\dif \gm
\\
&\leq&
\norm{P}_{\mathrm{op}}^{2}
\int_{G} \int_{X}^{}
|c_{1}^{}(x)|^{2} \norm{\gminv s(x)}_{\mathbb{V}}^{2}
 \:\dif x \:\dif \gm
\\
&\leq&
\norm{P}_{\mathrm{op}}^{2}
\int_{G} \int_{X}^{}
|c_{1}^{}(\gminv x)|^{2} \norm{s(x)}_{\mathbb{V}}^{2}
 \:\dif x \:\dif \gm
\\
&\leq&
\norm{P}_{\mathrm{op}}^{2}
\sup_{x\in X}\parens{\int_{G}
|c_{1}^{}(\gminv x)|^{2} 
\:\dif \gm}
\norm{s}_{L^{2}(X;\mathbb{V})}^{2}.
\end{eqnarray*}
Since the action of $G$ is proper, 
$\set{ \gm \in G }{ \gminv x \in \supp(c_{1}^{})} \subset G$ 
is compact so the value $\int_{G}
|c_{1}^{}(\gminv x)|^{2} 
\:\dif \gm$ is always finite for any fixed $x\in X$.
Besides, since $X/G$ is compact, this value is uniformly bounded;
$$
C:=
\sup_{x\in X}\parens{\int_{G}
|c_{1}^{}(\gminv x)|^{2} 
\:\dif \gm}
=
\sup_{[x]\in X/G}\parens{\int_{G}
|c_{1}^{}(\gminv x)|^{2} 
\:\dif \gm}
< \infty.
$$
Remind that $C$ depends only on $K_{X}^{}$, not on $s$.
We conclude that
$$
\norm{\int_{G}\gm[P]s \:\dif \gm}_{L^{2}(Y;\mathbb{W})}^{2}
=
\norm{\int_{G}F_{s}^{}(\gm) \:\dif \gm}_{L^{2}(Y;\mathbb{W})}^{2}
\leq
|Z|\norm{F_{s}^{}}_{L^{2}(G)}^{2}
\leq
|Z|C \cdot \norm{P}_{\mathrm{op}}^{2}
\norm{s}_{L^{2}(X;\mathbb{V})}^{2}.
$$
\end{proof}
\begin{lem}{\upshape \cite[Lemma 5.3]{Kas16}}
\label{LemPositiveScalarProdOnE}
Let $P$ be a bounded positive operator on $L^{2}(X;\mathbb{V})$
with a compactly supported distributional kernel.
Then the scalar product
$$
(s_1, s_2) \mapsto 
\angles{ s_1,\
 \parens{ \int_{G} \gm[P]\: \dif \gm }s_2
}_{\E(\mathbb{V})} \in C^{*}(G)
$$
is well defined and positive for any
$s_1 = s_2 \in C_{c}(X;\mathbb{V})$.
\end{lem}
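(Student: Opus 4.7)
The first task is well-definedness. By Lemma \ref{LemCoMo} applied to the compactly supported positive operator $P$, the integral $\widetilde{P}:=\int_{G}\gm[P]\:\dif\gm$ converges to a bounded operator on $L^{2}(X;\mathbb{V})$. The distributional kernel of $\widetilde{P}$ is supported in $\bigcup_{\gm\in G}\gm(K_{X})\times\gm(K_{X})$, which is properly supported because the $G$-action is proper (for fixed $x\in X$, only compactly many $\gm$ satisfy $\gm^{-1}x\in K_{X}$). Hence, for $s_{1},s_{2}\in C_{c}(X;\mathbb{V})$, the function
$$
\gm\mapsto\Delta(\gm)^{-1/2}\angles{s_{1},\widetilde{P}\gm s_{2}}_{L^{2}}
$$
has compact support in $G$ and lies in $C_{c}(G)\subset C^{*}(G)$.

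Next, the plan is to establish positivity by verifying that the coefficient $F(\gm):=\angles{s,\widetilde{P}\gm s}_{L^{2}}$ is a continuous positive-definite function on $G$. Note that $\widetilde{P}$ is $G$-invariant (by left-invariance of Haar measure), self-adjoint and positive on $L^{2}$, since these properties are preserved under integration against a positive measure. For any $\gm_{1},\ldots,\gm_{n}\in G$ and $c_{1},\ldots,c_{n}\in\C$, the unitarity of each $\gm_{j}$ on $L^{2}(X;\mathbb{V})$ together with the $G$-invariance of $\widetilde{P}$ gives
\begin{align*}
\sum_{i,j}c_{i}\overline{c_{j}}F(\gm_{j}^{-1}\gm_{i})
=\sum_{i,j}c_{i}\overline{c_{j}}\angles{\gm_{j}s,\widetilde{P}\gm_{i}s}_{L^{2}}
=\angles{\textstyle\sum_{j}c_{j}\gm_{j}s,\widetilde{P}\sum_{i}c_{i}\gm_{i}s}_{L^{2}}\geq 0.
\end{align*}
Equivalently, setting $w:=\widetilde{P}^{1/2}s\in L^{2}(X;\mathbb{V})$ and using the $G$-invariance of $\widetilde{P}^{1/2}$ from functional calculus, $F(\gm)=\angles{w,\gm w}_{L^{2}}$ is a matrix coefficient of the unitary $G$-representation on $L^{2}$, hence continuous and positive-definite.

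Finally, positive-definiteness of $F$ combined with the modular factor $\Delta^{-1/2}$ yields positivity of $\angles{s,\widetilde{P}s}_{\E}=\Delta^{-1/2}F$ in $C^{*}(G)$. This is exactly the same mechanism that makes the standard scalar product $\angles{s,s}_{\E}=\Delta^{-1/2}\angles{s,\gm s}_{L^{2}}$ on $\E(\mathbb{V})$ a positive element of $C^{*}(G)$: the involution $f^{*}(\gm)=\overline{f(\gm^{-1})}\Delta(\gm^{-1})$ on $C_{c}(G)$ is designed so that a continuous compactly supported positive-definite function $F$ gives rise to a positive element $\Delta^{-1/2}F\in C_{c}(G)\subset C^{*}(G)$. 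The main obstacle is exactly this last step: carefully tracking the modular-function conventions to verify the chain \emph{matrix coefficient of a unitary $G$-representation $\Longrightarrow$ positive-definite function $\Longrightarrow$ positive element of $C^{*}(G)$}. Once this correspondence is in place, the positivity of $\angles{s,\widetilde{P}s}_{\E}$ follows directly from the positive-definiteness of $F$ established above.
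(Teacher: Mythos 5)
Your argument correctly identifies well-definedness (via Lemma \ref{LemCoMo} and proper support of $\widetilde{P}$) and correctly shows that $F(\gm)=\angles{s,\widetilde{P}\gm s}_{L^2}$ is a continuous positive-definite function on $G$, using $G$-equivariance of $\widetilde{P}^{1/2}$ to write $F(\gm)=\angles{w,\gm w}_{L^2}$ with $w=\widetilde{P}^{1/2}s$. The genuine gap is exactly where you flag it: the implication ``$F$ positive-definite with compact support $\Rightarrow$ $\Delta^{-1/2}F$ is positive in $C^*(G)$'' is asserted but not proved, and it is not a routine fact you can cite. It is the crux of the lemma, not an afterthought, and the direction is delicate: positive elements of $C_c(G)\subset C^*(G)$ are those of the form $g^**g$ with $g^*(\gm)=\overline{g(\gm^{-1})}\Delta(\gm^{-1})$, and a short computation shows $\Delta^{1/2}(g^**g)$ (not $g^**g$ itself) is positive-definite; going from positive-definiteness of $F$ to positivity of $\Delta^{-1/2}F$ in every $*$-representation requires an honest argument.

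What the paper does differently, and what you lose by passing to $\widetilde{P}^{1/2}$: the paper takes the square root $\sqrt{P}$ of the \emph{unaveraged}, compactly supported operator, for which $\gm\mapsto\sqrt{P}(\gm[s])$ has compact support in $G$. This makes the vector $v:=\int_G\Delta(\gm)^{-1/2}\,\sqrt{P}(\gm[s])\otimes\gm[h]\:\dif\gm$ a convergent element of $L^2(X;\mathbb{V})\otimes\mathcal{H}$ for any unitary $G$-representation $\mathcal{H}$, and then the computation $0\le\norm{v}^2=\angles{h,\,f[h]}_{\mathcal{H}}$ with $f=\angles{s,\widetilde{P}s}_{\E}$ yields positivity of $f$ in every representation at once, which is precisely positivity in $C^*(G)$. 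In your reformulation, $\gm\mapsto\gm\widetilde{P}^{1/2}s$ has constant norm $\norm{\widetilde{P}^{1/2}s}$ and the analogous integral does not converge, so the direct-construction route is closed off; you must instead fall back on a general positive-definite-function-to-positive-element correspondence, which is the unproved step. To repair the argument one would either reintroduce $\sqrt{P}$ and reproduce the paper's tensor-product computation, or supply an independent proof that $\int_G\Delta(\gm)^{-1/2}\Phi(\gm)\:\dif\gm\geq 0$ for every compactly supported continuous positive-definite $\Phi$ (which, after multiplying $F$ by the positive-definite matrix coefficient of an arbitrary test representation, is what your claim reduces to). Until one of these is done, the proof is incomplete.
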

\begin{proof}
Note that 
\begin{align*}
\angles{ \gm[s],\ P(\gm[s])}_{L^{2}(X;\mathbb{V})}^{}
=
\angles{ \sqrt{P}(\gm[s]),\ \sqrt{P}(\gm[s])}_{L^{2}(X;\mathbb{V})}^{}
\end{align*}
for $\gm \in G$ and $s \in C_{c}^{}(X;\mathbb{V})$.
Regarding the each side of the above equation as a function in $\gm \in G$,
it is clear that the left hand side vanishes outside some compact subset in $G$
depending on the support of $s$ and $P$.
This implies $\sqrt{P}(\gm[s])$ has a compact support in $G$.
Take any unitary representation space $\mathcal{H}$ of $G$
and $h \in \mathcal{H}$. By the above observation of the compact support,
$$
v := \int_{G} \Delta (\gm)_{}^{-\half} \sqrt{P}(\gm[s]) \otimes \gm [h] \: \dif \gm
\quad \in L_{}^{2}(X; \mathbb{V}) \otimes \mathcal{H}
$$
is well-defined. Then we obtain that
\begin{align*}
0 \leq \norm{v}_{}^{2}
=&
\int_{G}\int_{G} \Delta (\gm)_{}^{-\half} \Delta (\eta)_{}^{-\half}
\angles{\sqrt{P}(\gm[s]),\ \sqrt{P}(\eta[s])}_{ L_{}^{2}(X; \mathbb{V})}^{} 
\angles{\gm [h],\ \eta [h]}_{\mathcal{H}}^{} \: \dif \gm \: \dif \eta
\\
=&
\int_{G}\int_{G} \Delta (\gm)_{}^{-\half} \Delta (\eta)_{}^{-\half}
\angles{s,\ \gminv [P(\eta[s])]}_{ L_{}^{2}(X; \mathbb{V})}^{} 
\angles{h,\ \gminv \eta [h]}_{\mathcal{H}}^{} \: \dif \gm \: \dif \eta
\\
=&
\int_{G}\int_{G} \Delta (\gm)_{}^{-1} \Delta (\gminv \eta)_{}^{-\half}
\angles{s,\ \gminv [P]( \gminv \eta[s])}_{ L_{}^{2}(X; \mathbb{V})}^{} 
\angles{h,\ \gminv \eta [h]}_{\mathcal{H}}^{} \: \dif \gm \: \dif (\gminv \eta) 
\\
=&
\int_{G}\int_{G} \Delta (\zeta)_{}^{-\half}
\angles{s,\ \gminv [P]( \zeta[s])}_{ L_{}^{2}(X; \mathbb{V})}^{} 
\angles{h,\ \zeta [h]}_{\mathcal{H}}^{} \: \dif (\gminv) \: \dif \zeta
\\
=&
\int_{G}\Delta (\zeta)_{}^{-\half}
 \angles{s,\ \parens{\int_{G} \gm [P]\: \dif \gm} ( \zeta[s]) }_{ L_{}^{2}(X; \mathbb{V})}^{} 
\angles{h,\ \zeta [h]}_{\mathcal{H}}^{} \: \dif \zeta
\\
=&
\int_{G}
 \angles{s,\ \parens{\int_{G} \gm [P]\: \dif \gm} (s) }_{ \E(\mathbb{V})}^{} (\zeta)
\cdot \angles{h,\ \zeta [h]}_{\mathcal{H}}^{} \: \dif \zeta
\end{align*}
Recall that the action of 
$f := \angles{s,\ \parens{\int_{G} \gm [P]\: \dif \gm} (s) }_{ \E(\mathbb{V})}^{} \in C_{c}^{}(G)$
on $\mathcal{H}$ is given by $f[h] = \int_{G} f(\zeta) \zeta[h] \: \dif \zeta$ for $h \in \mathcal{H}$.
Thus, by rewriting the above inequality, we have
$\angles{h, f[h]}_{\mathcal{H}^{}} \geq 0$ for any $h$, 
which means that this $f$ is a positive operator
on any unitary representation space $\mathcal{H}$. To conclude, $f$ is positive in $C^{*}(G)$
for any $s \in C_{c}^{}(\E(\mathbb{V}))$.
\end{proof}
\begin{proof}[Proof of Proposition \ref{PropBoundedOpOnEV}]
Let $T_{1}^{} := \half \parens{c T_{}^{*}T + T_{}^{*}T c}$,
which is bounded self-adjoint operator $L^{2}(X;\mathbb{V}) \to L^{2}(X;\mathbb{V})$.
Moreover the distributional kernel of $T_{1}^{}$ is contained in $K\times K$ for
some compact subset $K \subset X$.
By Lemma \ref{LemCoMo}, $\int_{G} \gm[T_{1}^{}]$ is well-defined in $\mathbb{L}( L^{2}(X;\mathbb{V}))$ and 
\begin{align*}
\int_{G} \gm[T_{1}^{}] 
= \int_{G} \half \parens{\gm[c] T_{}^{*}T + T_{}^{*}T \gm[c]}
= T_{}^{*}T.
\end{align*}

Consider a compactly supported continuous function $f \in C_{c}(X; [0,1])$ 
satisfying that $c_{1}^{}=1$ on $K$
so that $c_{1}^{}T_{1}^{}c_{1}^{}=T_{1}^{}$ holds.
Consider the following self-adjoint operator;
$$
P:= c_{1}^{}\parens{ \norm{T}^{2}\norm{c} - T_{1}^{}} c_{1}^{} 
=
c_{1}^{2}\norm{T}^{2}\norm{c} - T_{1}^{}
\in \mathbb{L}(L^{2}(X;\mathbb{V})).
$$
Obviously $P$ is compactly supported and
since $T_{1}^{} \leq \norm{T_{1}^{}} \leq \norm{T}^{2}\norm{c}$, $P$ is positive.
Using Lemma \ref{LemPositiveScalarProdOnE}, 
for any $s \in C_{c}(\mathbb{V})$, the following value is positive;
\begin{align*}
0 &\leq \angles{ s,\
 \parens{ \int_{G} \gm[P]\: \dif \gm }s
}_{\E(\mathbb{V})} 
\\
&\leq
C \norm{T}_{}^{2}\norm{c}\angles{s,s}_{\E(\mathbb{V})}
- \angles{s,\parens{\int_{G} \gm[T_{1}^{}]} s}_{\E(\mathbb{V})}
\in C^{*}(G),
\end{align*}
where $C$ is the maximum of a $G$-invariant bounded function
$\int_{G} \gm[c_{1}^{2}]$, which is independent of $s$.
To conclude,
\begin{align*}
\angles{T(s), T(s)}_{\E(\mathbb{W})}
&= \angles{s,T^{*}T(s)}_{\E(\mathbb{V})}
= \angles{s,\parens{\int_{G} \gm[T_{1}^{}]} s}_{\E(\mathbb{V})}
\\
&\leq C \norm{T}^{2}\norm{c}\angles{s,s}_{\E(\mathbb{V})}.
\end{align*}
\end{proof}

\subsection{Proof of Theorem A}
The theorem we will discuss is the following;
\begin{thmalph}
\label{MainThm}
Let $X$ and $Y$ be oriented even-dimensional complete Riemnannian manifolds and let
a locally compact Hausdorff group $G$ acts on $X$ and $Y$
isometrically, properly and co-compactly.
$\map{f} {Y} {X}$ be a $G$-equivariant orientation preserving homotopy equivalent map.
Let $\partial_{X}^{}$ and $\partial_{Y}^{}$ be the signature operators.
Then $\mathrm{ind}_{G}^{} (\partial_{X}^{}) = \mathrm{ind}_{G}^{} (\partial_{Y}^{} ) \in K_{0}^{}(C^{*}(G))$.
\end{thmalph}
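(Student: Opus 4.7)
The approach is to identify both $\mathrm{ind}_{G}(\partial_{X})$ and $\mathrm{ind}_{G}(\partial_{Y})$ with $\Psi$-values of triples in $\mathbb{J}(C^{*}(G))$ and then invoke the perturbation lemma (Lemma \ref{LemHilSkPerturb}). For $Z \in \{X, Y\}$ I take $\E_{Z} := \E(\turnV{*} T^{*}Z)$ with $\delta_{Z}$ the closure of the exterior derivative on compactly supported smooth forms, and $Q_{Z}(\omega, \eta)$ given (up to a sign convention chosen to make it $C^{*}(G)$-hermitian) by the $C^{*}(G)$-valued pairing obtained from integrating the wedge of $\omega$ against the conjugate of the $G$-translates of $\eta$, in direct analogy with the scalar product $\angles{\cdot, \cdot}_{\E}$ on $\E(\mathbb{V})$. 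Condition (1) of Definition \ref{DfnJA} is then Stokes' theorem, (2) is $d^{2} = 0$, and (3) is the existence of a $G$-invariant properly supported parametrix for $d + d^{*}$ whose error terms are compact by Theorem \ref{ThmKasparovEllipInd}. The compatible scalar product and $\Z/2\Z$-grading produced by Lemma \ref{LemAssociatedScalarProd} coincide with the usual $L^{2}$-structure and Hodge-star grading, yielding $\Psi(\E_{Z}, Q_{Z}, \delta_{Z}) = \mathrm{ind}_{G}(\partial_{Z})$.

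Next I produce the data $(T, \phi, \ep)$ of Lemma \ref{LemHilSkPerturb}. The natural candidate for $T$ is the pullback $f^{*}$, which is $G$-equivariant, commutes with $d$, and induces the cohomology isomorphism $[T]$ since $f$ is a homotopy equivalence. For $\ep$ I take the degree involution $(-1)^{\deg}$, which anticommutes with $d$, is $Q$-self-adjoint when $\dim Z$ is even (as the wedge forces degrees to sum to $n$), and commutes with any operator preserving the degree of forms, in particular with $T'T$. A chain homotopy $\phi$ satisfying $1 - T'T = d\phi + \phi d$ is extracted from a $G$-equivariant homotopy inverse $g \colon X \to Y$ and a $G$-equivariant homotopy $H \colon fg \simeq \mathrm{id}_{X}$ via fibre integration over $[0,1]$; the antisymmetry $\phi' = -\phi$ is then arranged by averaging, as in the opening of the proof of Lemma \ref{LemHilSkPerturb}.

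The central obstacle, and the reason one cannot simply take $T = f^{*}$ naively, is that the raw pullback is not bounded on the Hilbert $C^{*}(G)$-module $\E_{X}$ unless the differential of $f$ is controlled from below. I would therefore apply a $G$-equivariant variant of the Hilsum--Skandalis graph-submersion construction: choose a $G$-equivariant embedding of (a perturbation of) the graph of $f$ into a $G$-vector bundle over $Y$, whose existence for proper co-compact actions is ensured by the slice theorem (Proposition \ref{SliceThm}) and the finite-cover property of Corollary \ref{CorSliceThm}, so as to factor $f$ up to homotopy through a submersion $F$ and a zero section. Pullback along $F$ composed with fibrewise integration against a $G$-equivariant Thom form then produces a properly supported $G$-invariant operator whose boundedness on the module follows from Lemma \ref{LemCoMo} together with Proposition \ref{PropBoundedOpOnEV}; the analogous construction applied to the equivariant mapping cylinder of $H$ yields a bounded $\phi$. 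With these bounded, properly supported, $G$-equivariant replacements in hand, Lemma \ref{LemHilSkPerturb} immediately delivers $\mathrm{ind}_{G}(\partial_{X}) = \mathrm{ind}_{G}(\partial_{Y})$.

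The hardest part of the proof is not the topological content, which is the classical chain-level consequence of $f$ being a homotopy equivalence, but rather the analytic upgrade: producing $T$ and especially $\phi$ as genuine elements of $\mathbb{L}(\E_{X}, \E_{Y})$ and $\mathbb{L}(\E_{X})$ respectively, and doing so while preserving all the $G$-equivariance and proper-support conditions required by Proposition \ref{PropBoundedOpOnEV}. Both obstacles are resolved simultaneously by the graph-submersion trick, provided every cutoff and Thom form in the construction is chosen $G$-invariantly using the slice theorem and the bounded geometry from Corollary \ref{CorSliceThm}; verifying that the resulting $\phi$ commutes with $\ep = (-1)^{\deg}$ (degree-preserving) and satisfies $\phi' = -\phi$ modulo the averaging argument will require the most bookkeeping.
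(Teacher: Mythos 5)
Your overall strategy matches the paper's: identify $\mathrm{ind}_G(\partial_Z)$ with $\Psi(\E_Z,Q_Z,\delta_Z)$ and then feed a triple $(T,\phi,\ep)$ into Lemma \ref{LemHilSkPerturb}, replacing the unbounded $f^{*}$ by a bounded operator built from a submersion pullback, a $G$-invariant Thom form, and fibre integration. The paper's concrete realization of your ``factor $f$ through a submersion and a zero section'' is $W = f^{*}BX$ with submersion $p(y,v)=\exp_{f(y)}(v)$ and $T = T_{p,\omega} = q_{I} e_{\omega} p^{*}$ (Definition \ref{ConstructionOfTf}), boundedness coming from Lemma \ref{LemCoMo} and Proposition \ref{PropBoundedOpOnEV} as you expect, and the grading operator $\ep = (-1)^{\deg}$ likewise coincides.

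There is, however, a genuine gap in your construction of $\phi$. You propose to extract a chain homotopy $1 - T^{\prime}T = d\phi + \phi d$ from a $G$-homotopy $H\colon fg\simeq \mathrm{id}_{X}$ via fibre integration over $[0,1]$, but this conflates the $Q$-adjoint $T^{\prime}$ with the operator $S$ built from the homotopy inverse $g$. The homotopy $fg\simeq\mathrm{id}_{X}$ (combined with Lemma \ref{LemCompositionOfT} and Proposition \ref{KeyPropChainHomotopy}) gives $1 - ST = \delta_{X}\phi_{X} + \phi_{X}\delta_{X}$, which the paper uses to verify condition (1) of Lemma \ref{LemHilSkPerturb} (that $[T]$ is a cohomology isomorphism), \emph{not} condition (2). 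The operator $T^{\prime} = p_{I}e_{\omega}q^{*}$ is a Gysin-type transpose, and $T^{\prime}\neq S$ at the chain level even though both invert $[T]$ on cohomology. The actual argument of Proposition \ref{PropAssumptionTwo} is different in an essential way: it rewrites $T^{\prime}T$ over the fibre product $W\times_{Y}W$ as $p_{I}(q_{2})_{I}e_{q_{2}^{*}\omega}q_{1}^{*}e_{\omega}p^{*}$, applies Proposition \ref{KeyPropChainHomotopy} to the two projections $q_{1},q_{2}$ (which agree on the zero section) to reduce to $p_{I}e_{\omega}p^{*}$, then invokes the \emph{degree-one} property of the proper orientation-preserving $f$ to obtain $p_{I}e_{\omega}p^{*} = (p_{X})_{I}e_{\omega_{0}}p_{X}^{*}$, and finally deforms the exponential submersion $p_{X}$ to the bundle projection $\pi\colon BX\to X$. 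The degree-one identity is the crucial step linking $T^{\prime}T$ back to the identity, and it has no counterpart in your proposed route through $H\colon fg\simeq\mathrm{id}_{X}$.
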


From now on we will slightly change the notation for simplicity.
We will only consider $\mathbb{V}$ for the cotangent bundle
$\turnV{*}T^{*}X \otimes \C$.
Let us use $\E_{X}^{}$ for $\E(\turnV{*}T^{*}X \otimes \C)$.
Let $\Omega_{c}^{*}(X)$ be the space consisting of compactly supported
smooth differential forms on $X$, namely, $C_{c}^{\infty}(X; \mathbb{V})$.
We will prove Theorem \ref{MainThm} using Lemma \ref{LemHilSkPerturb}.

\begin{dfn}
\label{DfnQuadraticForSgn}
Let us introduce the following data $(\E, Q, \delta)$ to present the
$G$-index of the signature operator;
\begin{itemize}
\item
Let $C^{*}(G)$-valued quadratic form $Q_{X}^{}$ be defined by the formula;
\begin{align}
Q_{X}^{}(\nu, \xi)(\gm) := i^{k(n-k)}_{}
 \Delta (\gm)_{}^{-\frac{1}{2}} \int_{X}^{} \bar{\nu} \wedge \gm[\xi] 
\qqfor
\nu \in \Omega_{c}^{k}(X),\ \nu \in \Omega_{c}^{n-k}(X)
,\ \gm \in G,
\end{align}
here $\bar{\nu}$ denotes the complex conjugate.
If $\deg(\nu) + \deg(\xi) \neq \mathrm{dim}(X)$ then $Q_{X}^{}(\nu, \xi) := 0$.
This $\deg$ means the degree of the differential form.
\item
The grading $U_{X}^{}$ determined by $Q_{X}^{}$ is given by
\begin{align}
U_{X}^{}(\xi) = i^{-k(n-k)} \ast \xi
\qqfor \xi \in \Omega_{c}^{k}(X),
\end{align}
where $\ast$ denotes the Hodge $\ast$-operation.

Clearly, $U_{X}^{2}= 1$ and 
$Q_{X}^{}\parens{ \nu,\ U_{X}^{}(\xi) } = \angles{\nu, \xi}_{\E_{X}^{}}$ hold.
\item
$\delta_{X}^{} (\xi):= i^{k}\dif_{X}^{}\xi$
for $\xi \in \Omega_{c}^{k}(X)$, here $\dif_{X}^{}$ denotes the exterior derivative on $X$.
\end{itemize}
We will also use the similar notations for $Y$.
\end{dfn}

\begin{lem}
$(\E_{X}^{}, Q_{X}^{}, \delta_{X}^{}) \in \mathbb{J}(C^{*}(G))$
and $\Psi (\E_{X}^{}, Q_{X}^{}, \delta_{X}^{}) = \mathrm{ind}_{G}^{} (\partial_{X}^{})$,
where $\partial_{X}^{}$ is the signature operator of $X$.
\end{lem}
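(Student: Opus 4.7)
The plan is to verify the three axioms in Definition \ref{DfnJA} for $(\E_X^{}, Q_X^{}, \delta_X^{})$ and then to identify the resulting class with $\mathrm{ind}_G^{}(\partial_X^{})$ using Theorem \ref{ThmKasparovEllipInd}. Axiom (1), $\delta_X^{\prime} = -\delta_X^{}$, should come from Stokes' theorem applied to the compactly supported form $\bar\nu \wedge \gm[\xi]$: the Leibniz rule $d(\bar\nu \wedge \gm[\xi]) = \overline{d\nu} \wedge \gm[\xi] + (-1)^{\deg \nu} \bar\nu \wedge d(\gm[\xi])$ combined with the equivariance $\gm[d\xi] = d(\gm[\xi])$ and the vanishing of $\int_X d(\bar\nu \wedge \gm[\xi])$ gives $Q_X^{}(\delta_X^{}\nu, \xi) = -Q_X^{}(\nu, \delta_X^{}\xi)$ once the powers of $i$ in $\delta_X^{}$ and in the prefactor $i^{k(n-k)}$ of $Q_X^{}$ are carefully tallied. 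Axiom (2) is immediate from $d^2 = 0$ and the preservation of compact supports by $d$.

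For Axiom (3), set $D := \delta_X^{} + \delta_X^{*}$. Because $D$ differs from the de Rham operator $d + d^{*}$ only by fiberwise multiplication by unit complex numbers, it is a formally self-adjoint $G$-invariant first-order elliptic operator on the $G$-Hermitian bundle $\turnV{*}T_{}^{*}X \otimes \C$, so Theorem \ref{ThmKasparovEllipInd} applies and yields $(1 + D^2)_{}^{-1} \in \mathbb{K}(\E_X^{})$. The identities $\delta_X^{2} = 0 = (\delta_X^{*})_{}^{2}$ give $D^2 = \delta_X^{}\delta_X^{*} + \delta_X^{*}\delta_X^{}$, from which one checks $D^2 \delta_X^{} = \delta_X^{} D^2$, so $(1+D^2)_{}^{-1}$ commutes with $\delta_X^{}$ via functional calculus. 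Setting $\sigma := \tau := \delta_X^{*}(1+D^2)_{}^{-1}$, one then computes
\[
\sigma \delta_X^{} + \delta_X^{} \tau = (\delta_X^{*}\delta_X^{} + \delta_X^{}\delta_X^{*})(1+D^2)_{}^{-1} = D^2(1+D^2)_{}^{-1} = 1 - (1+D^2)_{}^{-1},
\]
so $\sigma \delta_X^{} + \delta_X^{} \tau - 1 \in \mathbb{K}(\E_X^{})$; and each of $\sigma, \tau$ factors as a bounded operator (namely $\delta_X^{*}(1+D^2)_{}^{-1/2}$) composed with the compact operator $(1+D^2)_{}^{-1/2}$, hence is itself compact.

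For the identification of classes, the scalar product $\angles{\cdot,\cdot}_{Q_X^{}}$ associated to $Q_X^{}$ via Lemma \ref{LemAssociatedScalarProd} coincides by inspection with the original $C^{*}(G)$-valued inner product on $\E_X^{}$ (that is exactly how $U_X^{}$ was defined), and the induced $\Z/2\Z$-grading is the one given by $U_X^{}$. Hence $F_{\delta_X^{}} = D(1+D^2)_{}^{-1/2}$ is precisely the Fredholm operator produced by Theorem \ref{ThmKasparovEllipInd} applied to $D$, giving $\Psi(\E_X^{}, Q_X^{}, \delta_X^{}) = \mathrm{ind}_G^{}(D)$. Finally, $D$ is a zero-order $G$-invariant unitary rescaling of the standard signature operator $\partial_X^{}$ on $\turnV{*}T_{}^{*}X \otimes \C$ (intertwined by the bundle endomorphism $\xi \mapsto i_{}^{\deg \xi(\deg \xi - 1)/2} \xi$, which preserves the Hodge-type grading up to a sign), so $\mathrm{ind}_G^{}(D) = \mathrm{ind}_G^{}(\partial_X^{})$. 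The main obstacles I anticipate are the careful sign and power-of-$i$ bookkeeping required in Axiom (1), and the explicit verification of the grading-preserving unitary intertwining $D$ with $\partial_X^{}$; the analytic input reduces to a direct application of Theorem \ref{ThmKasparovEllipInd} together with the standard parametrix identity.
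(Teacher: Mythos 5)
Your proposal follows essentially the same route as the paper: verify $\delta_X^{\prime}=-\delta_X^{}$ by Stokes, verify $\delta_X^2=0$, apply Theorem~\ref{ThmKasparovEllipInd} to $D:=\delta_X^{}+\delta_X^{\ast}$ to obtain $(1+D^2)^{-1}\in\mathbb{K}(\E_X^{})$, take $\sigma=\tau:=\delta_X^{\ast}(1+D^2)^{-1}$ for axiom~(3), and observe that $F_{\delta_X^{}}$ is the Fredholm operator produced by Kasparov's construction. Two points differ from the paper and one of them is a genuine (if small) gap.

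\textbf{Closedness of $\delta_X^{}$.} Definition~\ref{DfnJA} requires $\delta$ to be a densely defined \emph{closed} operator on $\E$, and you quietly assume this when you write $\delta_X^{\ast}$, $(1+D^2)^{-1}$, and when you commute $(1+D^2)^{-1}$ past $\delta_X^{}$. The paper addresses this explicitly: Theorem~\ref{ThmKasparovEllipInd} only gives that the closure of $D=\delta_X^{}-U_X^{}\delta_X^{}U_X^{}$ is self-adjoint, and one then uses the orthogonality of $\Im(\delta_X^{})$ and $\Im(-U_X^{}\delta_X^{}U_X^{})$ with respect to $\angles{\cdot,\cdot}_{\E_X^{}}$ to conclude that $\delta_X^{}$ itself is closed. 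You should include this argument (or an equivalent one) before working with $\delta_X^{\ast}$.

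\textbf{The final identification.} The paper does not compare $D$ with $d+d^{\ast}$; it directly treats $\partial_X^{}:=\delta_X^{}-U_X^{}\delta_X^{}U_X^{}=\delta_X^{}+\delta_X^{\ast}$ (with grading $U_X^{}$) as the signature operator, so that $\Psi(\E_X^{},Q_X^{},\delta_X^{})=\mathrm{ind}_G^{}(\partial_X^{})$ is immediate from the definition of $\Psi$ once axioms (1)--(3) hold. Your extra step of intertwining $D$ with $d+d^{\ast}$ via $\xi\mapsto i^{\deg\xi(\deg\xi-1)/2}\xi$ is therefore unnecessary, and as written it does not close: you note yourself that this unitary preserves the Hodge-type grading only ``up to a sign,'' but if the conjugated grading differs from the target grading by $-1$ (even degree-by-degree) the resulting $KK$-class is negated, not preserved, so the equality $\mathrm{ind}_G^{}(D)=\mathrm{ind}_G^{}(\partial_X^{})$ would not follow from that intertwiner without a careful sign check. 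Dropping this step and using the paper's convention for $\partial_X^{}$ makes the conclusion both simpler and airtight.
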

\begin{proof}
First, obviously $\delta_{}^{2} =0$.
Applying Theorem \ref{ThmKasparovEllipInd} to the signature operator on $X$,
it follows that
$ \delta_{X}^{} - U_{X}^{} \delta_{X}^{} U_{X}^{} \colon \Omega_{c}^{*}(X) \to \E_{X}^{}$
is closable and its closure is self-adjoint.
Let us use $ \delta_{X}^{} - U_{X}^{} \delta_{X}^{} U_{X}^{} $ for also its closure.
Since $\Im (\delta_{X}^{})$ and $\Im ( - U_{X}^{} \delta_{X}^{} U_{X}^{} )$ are orthogonal to each other
with respect to the scalar product $\angles{\cdot, \cdot}_{\E_{X}^{}}$,
it follows that $\delta_{X}^{}$ itself is a closed operator on $\E$.
Moreover, set
$\sigma = \tau := \frac{ \delta_{X}^{\ast} } { 1+ (\delta_{X}^{\ast} + \delta_{X}^{})_{}^{2} }$.
They belong to $\mathbb{K}(\E_{X}^{})$ since
$\frac{ \delta_{X}^{\ast} } { \delta_{X}^{\ast} + \delta_{X}^{} \pm i} \in \mathbb{L}(\E_{X}^{})$
and $\frac{ 1} { \delta_{X}^{\ast} + \delta_{X}^{} \pm i} \in \mathbb{K}(\E_{X}^{})$.
Then from Theorem \ref{ThmKasparovEllipInd}, we obtain 
$$
\sigma \delta_{X}^{} + \delta_{X} \tau - 1 
= \frac{-1} { 1+ (\delta_{X}^{\ast} + \delta_{X}^{})_{}^{2} } \in \mathbb{K}(\E_{X}^{}).
$$
Therefore, $(\E_{X}^{}, Q_{X}^{}, \delta_{X}^{}) \in \mathbb{J}(C^{*}(G))$
and $\Psi (\E_{X}^{}, Q_{X}^{}, \delta_{X}^{}) = \mathrm{ind}_{G}^{} (\partial_{X}^{})$
by the definition of $\Psi$.
\end{proof}

Let $\map{f} {Y} {X}$ be a $G$-equivariant proper orientation preserving homotopy equivalent map
between $n$-dimensional proper co-compact Riemannian $G$-manifolds.
In order to construct a map $T\in \mathbb{L} (\E_{X}^{}, \E_{Y}^{})$
satisfying the hypothesis of Lemma \ref{LemHilSkPerturb},
it is sufficient to construct an $L^{2}$-bounded $G$-invariant operator
$\map{T} {\Omega_{c}^{*}(X)} {\Omega_{c}^{*}(Y)}$
due to Proposition \ref{PropBoundedOpOnEV}.

\begin{rem}
Note that
$\map {f^{*}} {\Omega_{c}^{*}(X)} {\Omega_{c}^{*}(Y)}$ may not be $L^{2}$-bounded 
unless $\map{f}{Y}{X}$ is submersion. 
For instance, let $Y=X=[-1,1]$ and $f(y)=y^{3}$.
Consider an $L^{2}$-form $\omega$ on $X$ given by 
$\omega(x)= \frac{1}{|x|^{1/4}}$.
Actually $\norm{\omega}_{L^{2}(X)}^{2} = \int_{-1}^{1} \frac{1}{|x|^{1/2}} \dif x = 2$, 
however, 
$\norm{f^{*}\omega}_{L^{2}(Y)}^{2} = \int_{-1}^{1}\frac{1}{|y|^{3/2}} \dif y
= +\infty$.
So we need to replace $f^{*}$ by a suitable operator.
\end{rem}
\newcommand{\Wund}{W}
Let us construct operator $T$ that we need
and investigate its properties in a slightly more general condition.
\begin{itemize}
\item
$X$ and $Y$ are Riemannian manifold and $G$ acts on them isometrically and properly.
For a while, $X$ and $Y$ may have boundary and the action may 
not be co-compact if not mentioned.
\item
Let $\Wund$ be an oriented $G$-invariant fiber bundle over $Y$
whose typical fiber is an even dimensional unit open disk $B^{k} \subset \R_{}^{k}$.
Let $q \colon \Wund \twoheadrightarrow Y$ denote the canonical projection map
and $\map{q_{I}^{}} {\Omega_{c}^{* +k}(\Wund)} {\Omega_{c}^{*}(Y)}$ be the 
integration along the fiber.
\item
%
Let us fix $\omega \in \Omega_{}^{k}(\Wund)$ be a $G$-invariant closed $k$-form
with fiber-wisely compact support
such that the integral along the fiber is always equal to $1$;
$q_{I}^{}(\omega)(y) = \int_{\Wund_{y}^{}} \omega = 1$ for any $y \in Y$.
Let $e_{\omega}^{}$ denote the operator 
given by $e_{\omega}^{}(\zeta) = \zeta \wedge \omega$ for $\zeta \in \Omega_{}^{*}(\Wund)$.

We can construct a $G$-invariant $\omega$ as follows;
Let $\tau \in \Omega_{}^{k}(\Wund)$ be a $k$-form inducing a Thom class of $\Wund$.
We may assume that $\int_{\Wund_{y}^{}} \tau = 1$ for any $y \in Y$.
Then $\omega := \int_{G} \gm[c \tau] \dif \gm$ is
a desired $G$-invariant form.
\item
Suppose that we have a $G$-equivariant submersion
$\map{p} {\Wund} {X}$ whose restriction on $\supp(\omega) \subset \Wund$ is proper.
\end{itemize}
\begin{dfn}
For the above data, let us set
$T_{p,\omega}^{} := q_{I}^{} e_{\omega}^{} p^{*} \colon \Omega_{c}^{*}(X) \to \Omega_{c}^{*}(Y)$.
We may write just $T_{p}$ for simplicity.
\begin{align*}
\xymatrix{
\Wund \ar @{>>}[d]_{q} \ar[rd]^{p} &
\\ 
Y  & X
}
\Omega_{c}^{*}(X) 
\underset{p_{}^{*}}{\to} \Omega_{}^{*}(\Wund) 
\underset{e_{\omega}^{}}{\to} \Omega_{c}^{*+k}(\Wund) 
\underset{q_{I}^{}}{\to} \Omega_{c}^{*}(Y).
\end{align*}
\end{dfn}
\begin{lem}
\label{LemEboundedness}
If the actions of $G$ are co-compact, 
then $T_{p,\omega}^{}$ determines an operator in $\mathbb{L}(\E_{X}^{}, \E_{Y}^{})$.
\end{lem}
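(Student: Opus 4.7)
The plan is to apply Proposition \ref{PropBoundedOpOnEV}, so it suffices to verify three properties of $T_{p,\omega}^{}$: $G$-invariance, proper support, and $L^{2}$-boundedness. The first two are formal; the $L^{2}$ estimate is the main content.

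$G$-invariance is immediate: $p^{*}$ commutes with the $G$-action because $p$ is $G$-equivariant, $e_{\omega}^{}$ because $\omega$ is $G$-invariant, and $q_{I}^{}$ because $q$ is $G$-equivariant. For proper support, the distributional kernel of $T_{p,\omega}^{}$ is contained in the set of $(x,y)$ for which $p^{-1}(x) \cap q^{-1}(y) \cap \supp(\omega) \neq \emptyset$. Given compact $K_{X}^{} \subset X$, properness of $p|_{\supp(\omega)}^{}$ makes $p^{-1}(K_{X}^{}) \cap \supp(\omega)$ compact, and its image under $q$ is a compact subset of $Y$. Given compact $K_{Y}^{} \subset Y$, the fiberwise compact support of $\omega$ makes $q^{-1}(K_{Y}^{}) \cap \supp(\omega)$ compact, and its image under $p$ is a compact subset of $X$.

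The main obstacle is the $L^{2}$-bound, and the key principle is that $G$-cocompactness (via Corollary \ref{CorSliceThm}) upgrades any $G$-invariant pointwise estimate on a $G$-invariant set to a uniform one. I would bound each factor of $T_{p,\omega}^{} = q_{I}^{} \circ e_{\omega}^{} \circ p^{*}$ in turn. For $p^{*}$: since $p$ is a $G$-equivariant submersion, the operator norm $|dp|$ is $G$-invariant, and hence uniformly bounded on the cocompact $G$-invariant set $\supp(\omega)$; a change-of-variables using the pushforward of the volume measure by $p$ then gives $\norm{p^{*}s}_{L^{2}(\supp(\omega))}^{2} \leq M \int_{X} |s(x)|^{2} \rho(x) \, \dif x$, where the density $\rho$ is finite by properness of $p|_{\supp(\omega)}^{}$, is $G$-invariant, and hence uniformly bounded by $G$-cocompactness of $X$. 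For $e_{\omega}^{}$: the sup norm $\norm{\omega}_{\infty}^{}$ is finite because $\omega$ is $G$-invariant with $G$-cocompact support. For $q_{I}^{}$: fiberwise Cauchy--Schwarz applied to $\zeta = (p^{*}s) \wedge \omega$, whose support lies in $\supp(\omega)$, yields $|q_{I}^{}(\zeta)(y)|^{2} \leq \vol(W_{y}^{} \cap \supp(\omega)) \cdot \int_{W_{y}^{}} |\zeta|^{2}$, and the fiber volume is uniformly bounded in $y$ by the same cocompactness argument.

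Composing the three bounds yields $\norm{T_{p,\omega}^{}s}_{L^{2}(Y)}^{} \leq C \norm{s}_{L^{2}(X)}^{}$, and Proposition \ref{PropBoundedOpOnEV} then places $T_{p,\omega}^{}$ in $\mathbb{L}(\E_{X}^{}, \E_{Y}^{})$.
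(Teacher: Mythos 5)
Your proposal is correct and follows essentially the paper's own route: reduce to Proposition \ref{PropBoundedOpOnEV}, then establish the $L^{2}$ bound by a change of variables along the proper submersion $p|_{\supp(\omega)}^{}$ and invoke $G$-cocompactness to make the resulting fiberwise quantities uniform. The only cosmetic difference is that the paper bounds $e_{\omega}^{}p^{*}$ in one stroke via the identity $\norm{e_{\omega}^{}p^{*}\xi}_{L^{2}(W)}^{2} = \int_{X}|\xi|^{2}\,p_{I}^{}(|\omega|^{2}\vol_{W}^{})$ and treats the $G$-invariance, proper support, and $q_{I}^{}$-boundedness as tacit, whereas you factor the estimate through $p^{*}$, $e_{\omega}^{}$, and $q_{I}^{}$ separately and spell out the tacit points.
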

\begin{proof}
By Proposition \ref{PropBoundedOpOnEV}, it is sufficient to check that 
$T_{p,\omega}^{}$ is $L^{2}$-bounded.

Since $q_{I}^{}$ is obviously $L^{2}$-bounded, only the boundedness of 
$e_{\omega}^{} p^{*} \colon \Omega_{c}^{*}(X) \to \Omega_{c}^{*}(\Wund)$
is non-trivial.
Note that our proper submersion $p$ restricted on $\supp(\omega) \subset \Wund$ is locally trivial 
$G$-invariant fibration. Let $p_{I}^{}$ denotes the integration along this fibration. Then 
$$
\int_{\Wund} \zeta
=
\int_{X} p_{I}^{}\zeta
$$
holds for any compactly supported differential form
$\zeta \in \Omega_{c}^{*}(\Wund)$ satisfying $\supp(\zeta) \subset \supp(\omega)$,
in particular, $\zeta = |(p^{*}\xi)\wedge \omega|_{}^{2}\mathrm{vol}_{\Wund}^{}
 \in \Omega_{c}^{n+k}(\Wund)$
for $\xi \in \Omega_{c}^{*}(X)$.
Let $C_{\omega}^{}$ be the maximum of the norm of bounded $G$-invariant form 
$p_{I}^{}\parens{ |\omega|_{}^{2}\mathrm{vol}_{\Wund}^{} } \in \Omega_{}^{n}(X)$.
\begin{align*}
\norm{e_{\omega}^{} p^{*} (\xi)}_{L^{2}(\Wund)}^{2}
&=
\int_{\Wund} |(p^{*}\xi)\wedge \omega|_{}^{2}\mathrm{vol}_{\Wund}^{}
\overset{(\dagger)}{=}
\int_{X} |\xi|_{}^{2} p_{I}^{}\parens{ |\omega|_{}^{2}\mathrm{vol}_{\Wund}^{} }
\\
&\leq
C_{\omega}^{} \int_{X} |\xi|_{}^{2}\mathrm{vol}_{X}^{}
=
C_{\omega}^{} \norm{\xi}_{L^{2}(X)}^{2}
\qqfor \xi \in \Omega_{c}^{*}(X).
\end{align*}
The equation $(\dagger)$ holds because the function $p^{*}|\xi|_{}^{2}$ is constant
along the fiber $p^{-1}(x)$.
\end{proof}
\newcommand{\Vund}{V}
\begin{lem}
\label{LemCompositionOfT}
Let us consider proper co-compact $G$-manifold $X$, $Y$ and $Z$
and let $q_{1}^{} \colon \Wund \twoheadrightarrow Y$
and $q_{2}^{} \colon \Vund \twoheadrightarrow Z$ 
be $G$-invariant oriented disk bundles over $Y$ and $Z$
with typical fiber $B^{k_{1}}$ and $B^{k_{2}}$.
Fix $G$-invariant closed forms $\omega_{1}^{} \in \Omega_{}^{k_{1}}(\Wund)$
and $\omega_{2}^{} \in \Omega_{}^{k_{2}}(\Vund)$
with fiber-wisely compact support
satisfying $(q_{j}^{})_{I}^{}(\omega_{j}^{}) = 1$.
Let $\map{p_{1}^{}} {\Wund} {X}$ $\map{p_{2}^{}} {\Vund} {Y}$ be 
$G$-equivariant submersions whose restriction on $\supp(\omega_{j}^{})$ are proper.

On the other hand, as in the diagram below,
 let us consider the pull-back bundle
$p_{2}^{*}\Wund = \set{ (v,w) \in \Vund \times \Wund} { p_{2}^{}(v) = q_{1}^{}(w) }$ over $\Vund$
and let us regard it as a fiber bundle over $Z$ with projection denoted by $q_{21}^{}$.
Let us set
$\omega_{21}^{} := \widetilde{p_{2}^{}}_{}^{*}\omega_{1}^{} \wedge \widetilde{q_{1}^{}}_{}^{*}\omega_{2}^{}
\in \Omega_{}^{*}(p_{2}^{*} \Wund)$,
$p_{21}^{} := p_{1}^{}\widetilde{p_{2}^{}}$, where 
$\map{\widetilde{q_{1}^{}} } {p_{2}^{*}\Wund} {\Vund}$ denotes the projection
and $\map {\widetilde{p_{2}^{}} } {p_{2}^{*}\Wund} {\Wund}$ 
denotes the map induced by $p_{2}^{}$.

Then $T_{p_{2}}^{}T_{p_{1}}^{} = T_{p_{21}}^{} \colon \E_{X}^{} \to \E_{Z}^{}$.
\begin{align*}
\xymatrix{
&&
\\
 \Vund \ar @{>>}[d]_{q_{2}^{}} \ar[rd]|-{p_{2}^{}} 
& \Wund \ar @{>>}[d]^{q_{1}^{}} \ar[rd]|-{p_{1}^{}} &
\\ 
 Z & Y  & X
}
&
\xymatrix{
 p_{2}^{*}\Wund \ar @{>>}[d]^{ \widetilde{q_{1}^{}} } \ar @/^1pc/[rrdd]^{p_{21}^{}} 
 \ar[rd]|-{ \widetilde{p_{2}^{}} } \ar @/_/@{>>}[dd]_{q_{21}^{}}
&&
\\ 
 \Vund \ar @{>>}[d] \ar[rd]
& \Wund \ar @{>>}[d] \ar[rd]&
\\ 
 Z&Y & X
}
\\
T_{p_{2}}^{}T_{p_{1}}^{}
\colon \E_{X}^{} \to \E_{Y}^{} \to \E_{Z}^{},
&
\qquad
T_{p_{21}}^{}
\colon \E_{X}^{} \to \E_{Z}^{}.
\end{align*}
\end{lem}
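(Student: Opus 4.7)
The plan is to verify the identity $T_{p_2}^{}T_{p_1}^{}(\xi) = T_{p_{21}}^{}(\xi)$ directly on test forms $\xi \in \Omega_{c}^{*}(X)$ and then invoke density together with Lemma \ref{LemEboundedness} to extend the equality to all of $\E_{X}^{}$. First I would check that the triple $(p_{2}^{*}\Wund, \omega_{21}^{}, p_{21}^{})$ meets the setup used to define $T_{p_{21}}^{}$: the pullback $p_{2}^{*}\Wund$ is a $G$-invariant oriented disk bundle over $Z$ with even-dimensional fiber $B^{k_{1}+k_{2}}$; the form $\omega_{21}^{} = \widetilde{p_{2}^{}}_{}^{*}\omega_{1}^{} \wedge \widetilde{q_{1}^{}}_{}^{*}\omega_{2}^{}$ is $G$-invariant, closed, and fiber-wise compactly supported; one has $(q_{21}^{})_{I}^{}(\omega_{21}^{}) = 1$ (by a two-step reduction using the base-change identity below to obtain $(\widetilde{q_{1}^{}})_{I}^{}(\widetilde{p_{2}^{}}_{}^{*}\omega_{1}^{}) = p_{2}^{*}((q_{1}^{})_{I}^{}\omega_{1}^{}) = 1$, followed by $(q_{2}^{})_{I}^{}\omega_{2}^{} = 1$); and $p_{21}^{}|_{\supp(\omega_{21}^{})}$ is proper, being the composition of $\widetilde{p_{2}^{}}|_{\supp(\omega_{21}^{})}$ (proper because $p_{2}^{}|_{\supp(\omega_{2}^{})}$ is) with $p_{1}^{}|_{\supp(\omega_{1}^{})}$.

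The algebraic identity itself rests on three classical facts about integration along the fiber, which I would apply in succession. First, the right-hand square in the diagram is cartesian, yielding the base-change formula $p_{2}^{*} \circ (q_{1}^{})_{I}^{} = (\widetilde{q_{1}^{}})_{I}^{} \circ \widetilde{p_{2}^{}}_{}^{*}$; this is where the definition of $p_{2}^{*}\Wund$ is essential. Second, the projection formula gives $(\widetilde{q_{1}^{}})_{I}^{}(\alpha \wedge \widetilde{q_{1}^{}}_{}^{*}\omega_{2}^{}) = (\widetilde{q_{1}^{}})_{I}^{}(\alpha) \wedge \omega_{2}^{}$; the a priori sign $(-1)^{k_{1}^{} \deg \omega_{2}^{}}$ vanishes because $k_{1}^{}$ is even. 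Third, functoriality of fiber integration under $q_{21}^{} = q_{2}^{} \circ \widetilde{q_{1}^{}}$ gives $(q_{21}^{})_{I}^{} = (q_{2}^{})_{I}^{} \circ (\widetilde{q_{1}^{}})_{I}^{}$. Combined with $p_{21}^{*} = \widetilde{p_{2}^{}}_{}^{*} \circ p_{1}^{*}$ from $p_{21}^{} = p_{1}^{} \widetilde{p_{2}^{}}$, a four-step unwinding of $T_{p_{2}}^{}T_{p_{1}}^{}(\xi) = (q_{2}^{})_{I}^{}\bigl( p_{2}^{*}((q_{1}^{})_{I}^{}(p_{1}^{*}\xi \wedge \omega_{1}^{})) \wedge \omega_{2}^{} \bigr)$ rewrites it as $(q_{21}^{})_{I}^{}(p_{21}^{*}\xi \wedge \omega_{21}^{}) = T_{p_{21}^{}}^{}(\xi)$.

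The main obstacle is careful bookkeeping rather than any conceptual difficulty: each of the three classical identities must be justified in the fiber-wise compactly supported setting of our operators, and the chain of supports and properness conditions has to be tracked so that every intermediate form lies in a space where the next operation makes sense. The even-dimensionality of the disk bundles, built into the construction, is precisely what suppresses the projection-formula sign and makes the identity hold on the nose.
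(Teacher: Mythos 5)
Your proposal is correct and takes essentially the same route as the paper: unwind both $T_{p_{21}}$ and $T_{p_2}T_{p_1}$ via functoriality of fiber integration and the projection formula, and reduce the comparison to the base-change identity $p_2^* \circ (q_1)_I = (\widetilde{q_1})_I \circ \widetilde{p_2}^*$ for the cartesian square, which the paper likewise verifies in local trivializations. You go slightly further than the paper by explicitly checking that $(p_2^*\Wund, \omega_{21}, p_{21})$ meets all the hypotheses needed for $T_{p_{21}}$ to be defined and by noting that the even fiber dimension $k_1$ kills the projection-formula sign, details which the paper leaves implicit.
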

\begin{proof}
First we can see that for $\xi \in \Omega_{c}^{*}(X)$,
\begin{align*}
T_{p_{21}}^{}(\xi)
&=
(q_{21}^{})_{I}^{} \circ e_{\omega_{21}^{}}p_{21}^{*}(\xi) 
\\
&=  (q_{2}^{})_{I}^{}(\widetilde{q_{1}^{}})_{I}^{}
\braces{
\widetilde{p_{2}^{}}_{}^{*} p_{1}^{*}\xi
\wedge (\widetilde{p_{2}^{}}_{}^{*}\omega_{1}^{} \wedge \widetilde{q_{1}^{}}_{}^{*}\omega_{2}^{})
}
\\
&=(q_{2}^{})_{I}^{}(\widetilde{q_{1}^{}})_{I}^{}
\braces{
\widetilde{p_{2}^{}}_{}^{*}(p_{1}^{*}\xi \wedge \omega_{1}^{}) \wedge \widetilde{q_{1}^{}}_{}^{*}\omega_{2}^{}
}
\\
&=
(q_{2}^{})_{I}^{}\braces{
(\widetilde{q_{1}^{}})_{I}^{}\parens{
\widetilde{p_{2}^{}}_{}^{*}(p_{1}^{*}\xi \wedge \omega_{1}^{}) 
}\wedge \omega_{2}^{}
}
\\
&=
(q_{2}^{})_{I}^{} e_{\omega_{2}^{}}^{} (\widetilde{q_{1}^{}})_{I}^{} \widetilde{p_{2}^{}}_{}^{*} e_{\omega_{1}^{}}^{} p_{1}^{*}(\xi),
\\
T_{p_{2}}^{}T_{p_{1}}^{}(\xi)
&=(q_{2}^{})_{I}^{} e_{\omega_{2}^{}}^{} p_{2}^{*} \circ (q_{1}^{})_{I}^{} e_{\omega_{1}^{}}^{} p_{1}^{*}(\xi).
\end{align*}
Note that $(\widetilde{q_{1}^{}})_{I}^{}$ in the second bottom row is well defined 
because the differential form $\widetilde{p_{2}^{}}_{}^{*} e_{\omega_{1}^{}}^{} p_{1}^{*}(\xi)$ is
compactly supported along each fiber of 
$\widetilde{q_{1}^{}} \colon p_{}^{*}\Wund \twoheadrightarrow \Vund$.
We need to prove the commutativity of the following diagram;
\begin{align}
\xymatrix{
 \Omega_{}^{*}(p_{2}^{*}\Wund) \ar[d]_{ (\widetilde{q_{1}^{}})_{I}^{} }
&
\\ 
 \Omega_{}^{*}(\Vund)
& \Omega_{c}^{*}(\Wund) \ar [d]^{ (q_{1}^{})_{I}^{} } \ar[lu]_{\widetilde{p_{2}^{}}_{}^{*}}
\\ 
 &\Omega_{}^{*}(Y)\ar[lu]_{p_{2}^{*}}
}
\label{DiagramCommqIpstar}
\end{align}
It is easy to check this using local trivializations.
Suppose that $\Wund \twoheadrightarrow Y$ is trivialized on $U \subset Y$.
Then $p_{2}^{*}\Wund$ is trivialized on $p_{2}^{-1}U \subset \Vund$.
We write these trivialization as $\Wund|_{U}^{} \simeq U \times B_{}^{k}$ and
$p_{2}^{*}\Wund|_{U}^{} \simeq p_{2}^{-1}U \times B_{}^{k}$.
Then for $\zeta(y,w) = f(y,w) \dif y \wedge \dif w \in \Omega_{c}^{*}(\Wund|_{U}^{})$,
$$
\parens{ (\widetilde{q_{1}^{}})_{I}^{}\widetilde{p_{2}^{}}_{}^{*} \zeta }(v) 
= \int_{B_{}^{k}}\parens{ f(p_{2}^{}(v),w) p_{2}^{*}(\dif y) }\dif w
= \parens{ p_{2}^{*}(q_{1}^{})_{I} \zeta } (v) \qqfor v \in p_{}^{-1}U \subset \Vund.
$$
\end{proof}
We will use the following proposition repeatedly.
\begin{prop}
\label{KeyPropChainHomotopy}
Let $\Wund_{1}^{}$ and $\Wund_{2}^{}$ be oriented $G$-invariant disk bundles over $Y$
with typical fiber $B^{k_{1}}$ and $B^{k_{2}}$,
and let $q_{j}^{} \colon W_{j}^{} \twoheadrightarrow Y$ be the projection.
Let $\omega_{j}^{} \in \Omega_{}^{k_{j}}$ be closed  forms
with fiber-wisely compact support
satisfying $(q_{j}^{})_{I}^{}(\omega) \omega_{j}^{} =1$.

Suppose that there exist $G$-equivariant submersions 
$\map{p_{j}^{} } {\Wund_{j}^{}} {X}$ whose restriction on the $0$-sections
$\map{p_{j}^{}( \cdot, 0) } {Y} {X}$ are $G$-equivariant homotopic
to each other.

Then, there exists a properly supported $G$-equivariant $L_{}^{2}$-bounded operator 
$\psi \colon \Omega_{c}^{*}(X) \to \Omega_{c}^{*}(Y)$ satisfying that
$T_{p_{2}^{},\omega_{2}^{}}^{} - T_{p_{1}^{},\omega_{1}^{}}^{} = \dif_{X}^{} \psi + \psi \dif_{Y}^{}$.
\end{prop}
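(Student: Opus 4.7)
The plan is to package the two triples $(W_j, p_j, \omega_j)$ together with the $G$-equivariant homotopy between $p_1|_Y$ and $p_2|_Y$ into a single bigger bundle over $Y$ with an extra $[0,1]$-factor, and to construct $\psi$ by fiber-integration on that bundle combined with a Stokes' theorem argument along the added $[0,1]$-factor.

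First I would form the $G$-equivariant fiber product $\tilde W := W_1 \times_Y W_2 \times [0,1]$, viewed as a bundle $\tilde q : \tilde W \twoheadrightarrow Y$ with typical fiber $B^{k_1} \times B^{k_2} \times [0,1]$, and write $\pi_j : \tilde W \to W_j$ for the natural projections. Set $\tilde \omega := \pi_1^{*} \omega_1 \wedge \pi_2^{*} \omega_2 \in \Omega^{k_1+k_2}(\tilde W)$; by Fubini this is closed, $G$-invariant, fiber-wise compactly supported in the $W_j$-directions, and integrates to $1$ on each $B^{k_1}\times B^{k_2}$-fiber. To build a $G$-equivariant smooth $\map{P}{\tilde W}{X}$ with $P(w_1,w_2,0) = p_1(w_1)$ and $P(w_1,w_2,1) = p_2(w_2)$, I would concatenate three $G$-equivariant homotopies: on $t \in [0, 1/3]$, radially deform $w_1$ to the zero section inside $W_1$ via $(w_1,w_2,t) \mapsto p_1(\alpha(t) \cdot w_1)$ for a smooth cut-off $\alpha$ with $\alpha(0)=1,\ \alpha(1/3)=0$; on $t \in [1/3, 2/3]$, apply $H$ via $(w_1,w_2,t) \mapsto H(q_1(w_1), 3t-1)$; on $t \in [2/3, 1]$, expand the zero section symmetrically out to $w_2$ inside $W_2$. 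Each piece is $G$-equivariant because the $G$-action preserves the linear fiber structure on each $W_j$ and $H$, $p_j$ are already $G$-equivariant. Moreover, after enlarging the fiber-wise supports of the $\omega_j$ to star-shaped neighborhoods of the zero sections, one has $\supp(\tilde\omega) \subset K_1 \times_Y K_2 \times [0,1]$ with $K_j \subset W_j$ fiber-wise compact, and Lemma \ref{LemPropernessOfEquivMap} guarantees that the restriction $P|_{\supp(\tilde\omega)}$ is proper.

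Now define $\psi(\xi) := \tilde q_I(\tilde\omega \wedge P^{*}\xi)$; this shifts degree by $-1$ because the fiber of $\tilde q$ has dimension $k_1+k_2+1$ while $\tilde\omega$ has degree $k_1+k_2$. The $L^{2}$-boundedness of $\psi$ follows by the same estimate as in Lemma \ref{LemEboundedness}, and Proposition \ref{PropBoundedOpOnEV} then promotes $\psi$ to an element of $\mathbb{L}(\E_X^{}, \E_Y^{})$; proper support and $G$-invariance are inherited from those of $P|_{\supp(\tilde\omega)}$ and $\tilde\omega$. For the chain-homotopy identity I apply Stokes' theorem to fiber-integration on $\tilde W$: since $\tilde\omega$ is closed and $\tilde\omega \wedge P^{*}\xi$ is fiber-wise compactly supported in the $W_j$-directions, only the boundary components at $t \in \{0,1\}$ contribute, yielding
\[
\dif_Y \tilde q_I(\tilde\omega \wedge P^{*}\xi) \;\pm\; \tilde q_I(\tilde\omega \wedge P^{*}\dif_X\xi) \;=\; T_{p_2,\omega_2}^{}(\xi) - T_{p_1,\omega_1}^{}(\xi),
\]
where the two boundary terms on the right are evaluated by Fubini together with the normalizations $(q_j)_I(\omega_j) = 1$ and the values $P|_{t=0} = p_1 \circ \pi_1$, $P|_{t=1} = p_2 \circ \pi_2$. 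Rearranging gives the claimed chain-homotopy formula, up to the sign conventions of Definition \ref{DfnQuadraticForSgn}.

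The main technical obstacle is the construction of $P$: one must produce a smooth $G$-equivariant interpolation which simultaneously (i) takes the prescribed values at $t=0,1$, (ii) inserts the given homotopy $H$ on the middle segment, and (iii) restricts properly to $\supp(\tilde\omega)$. The radial shrink-and-expand inside each $W_j$ is the essentially non-obvious ingredient, but it is automatic from the $G$-invariance of the disk-bundle structure. Once $P$ is in hand, the rest is a parameterised Poincar\'e lemma packaged via fiber integration.
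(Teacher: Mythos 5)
Your overall strategy---assemble the two disk bundles over $Y$ with an extra interval factor, use a fiber-wise product form, pull back $\xi$ by a big interpolating map $P$, and produce $\psi$ by integrating over the fiber so that Stokes' theorem gives the chain-homotopy identity---is the same as the paper's (its Lemma \ref{LemChainHomotopy} together with the proof of the present proposition). But the $P$ you write down fails to be a submersion precisely where it matters, and both the auxiliary lemma and the $L^{2}$-boundedness of $\psi$ require $P$ to be a submersion. On your middle segment $t\in[1/3,2/3]$ you have $P(w_1,w_2,t)=H\bigl(q_1(w_1),\,3t-1\bigr)$, which depends only on the base point $y=q_1(w_1)$ and on $t$; its differential annihilates all the fiber directions of $W_1\times_Y W_2$, so $P$ is not submersive unless every slice $H(\cdot,s)\colon Y\to X$ is a submersion, which a general $G$-homotopy need not be. The same degeneracy already occurs at $t=1/3$ and $t=2/3$, where $\alpha(t)=0$ collapses the $w_j$-coordinate to the zero section. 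This is not a cosmetic issue: the $L^{2}$-bound in Lemma \ref{LemEboundedness} (which one needs for $\xi\mapsto Q_I(P^{*}\xi\wedge\omega)$ in Lemma \ref{LemChainHomotopy}) is derived from the identity $\int_{W}\zeta=\int_X p_I\zeta$, and this only makes sense when $p|_{\supp\omega}$ is a locally trivial fibration; moreover the Remark following Definition \ref{DfnQuadraticForSgn} (the $f(y)=y^{3}$ example) shows that pull-back along a non-submersion can genuinely fail to be $L^2$-bounded.

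The paper avoids this by enlarging the \emph{fiber}, not only the interval: it fixes a cut-off $\chi\colon[0,3]\to[0,1]$, forms the naive interpolation $\widetilde{h}$ on $(W_1\times_Y W_2)\times[0,3]$ (which is a submersion wherever $\chi(t)\ne1$, thanks to the $(1-\chi(t))w_j$-coordinates), and then pulls back the unit tangent disk bundle $BX$ along $\widetilde{h}$ and sets
\[
P\bigl((y,t),w_1,w_2,v\bigr)=\exp_{\widetilde{h}((y,t),w_1,w_2)}\bigl(\chi(t)\,v\bigr).
\]
The $\exp(\chi(t)\,v)$-direction makes $P$ a submersion wherever $\chi(t)\ne0$, and since $\chi$ cannot equal $0$ and $1$ simultaneously, $P$ is a submersion everywhere; a $G$-invariant Thom form $\omega_{BX}$ of $BX$ then enters the total form $\omega=\pi_1^{*}\omega_1\wedge\pi_2^{*}\omega_2\wedge\widetilde{h}^{*}\omega_{BX}$. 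Your shrink-and-expand idea is right, but the extra $BX$-factor is the missing ingredient: it is what restores submersiveness of $P$ and hence what makes $\psi$ $L^2$-bounded. Once that repair is made, your Stokes computation and your properness and equivariance checks go through as in the paper.
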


First, let us prove the following lemma;
\begin{lem}
\label{LemChainHomotopy}
Let $Q \colon \widetilde{\Wund} \twoheadrightarrow Y\times[0,3]$
be a $G$-invariant disk bundle over $Y\times[0,3]$
and let $\omega \in \Omega_{}^{k}(\widetilde{\Wund})$ be a closed form 
with fiber-wisely compact support
satisfying $Q_{I}^{} (\omega) =1$.
Suppose that there exists a $G$-equivariant submersion
$P \colon \widetilde{\Wund} \to X$ whose restriction on $\supp(\omega)$ is proper.
Then there exists a properly supported $G$-equivariant $L_{}^{2}$-bounded operator 
$\psi \colon \Omega_{c}^{*}(X) \to \Omega_{c}^{*}(Y)$ satisfying that
$T_{P(\cdot,3), \omega(\cdot,3)}^{}- T_{P(\cdot,0), \omega(\cdot,0)}^{} =\dif_{X}^{} \psi + \psi \dif_{Y}^{}$.
\end{lem}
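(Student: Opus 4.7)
The plan is to reduce the lemma to the classical de Rham Poincar\'{e} lemma chain homotopy on $Y \times [0,3]$, composed with the globally defined operator obtained from $P$ and $\omega$ on the total space $\widetilde{\Wund}$.

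First I would consider the operator
$$
\widetilde{T} := Q_{I}^{} e_{\omega}^{} P_{}^{*} \colon \Omega_{c}^{*}(X) \to \Omega_{c}^{*}(Y \times [0,3]),
$$
which is well defined because $\omega$ is fiber-wisely compactly supported and the restriction $P|_{\supp(\omega)}$ is proper. The point is that under pullback by the inclusion $\map{i_{t}^{}}{Y}{Y \times [0,3]}$ at any $t$, $i_{t}^{*} \widetilde{T} = T_{P(\cdot,t),\, \omega(\cdot,t)}^{}$. Moreover $\widetilde{T}$ is a chain map: $P_{}^{*}$ commutes with $\dif$, and so does $e_{\omega}^{}$ because $\omega$ is closed, and $Q_{I}^{}$ commutes with $\dif$ because the fiber of $Q$ has no boundary and $\omega$ is fiber-wisely compactly supported.

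Next I would invoke the standard Poincar\'{e} lemma chain homotopy
$$
\map{K}{\Omega_{}^{*}(Y \times [0,3])}{\Omega_{}^{*-1}(Y)},
\qquad K\parens{\alpha + \dif t \wedge \beta} := \int_{0}^{3} \beta(\cdot, t)\, \dif t,
$$
where $\alpha$ and $\beta$ do not contain $\dif t$. This $K$ satisfies the identity $i_{3}^{*} - i_{0}^{*} = \dif_{Y}^{} K + K \dif_{Y \times [0,3]}^{}$. Set $\psi := K \circ \widetilde{T} \colon \Omega_{c}^{*}(X) \to \Omega_{c}^{*}(Y)$. Composing with $\widetilde{T}$ and using that $\widetilde{T}$ is a chain map gives
$$
T_{P(\cdot, 3),\, \omega(\cdot,3)}^{} - T_{P(\cdot, 0),\, \omega(\cdot, 0)}^{}
= (i_{3}^{*} - i_{0}^{*}) \widetilde{T} = \dif_{Y}^{}\psi + \psi \dif_{X}^{},
$$
which is the desired chain homotopy formula (up to the obvious typographical identification of $\dif_{X}^{}$ and $\dif_{Y}^{}$ in the statement).

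It remains to verify the properties of $\psi$. $G$-equivariance is automatic since $P$, $Q$, $\omega$ are $G$-invariant and the $[0,3]$ factor carries trivial $G$-action, so $K$ is $G$-equivariant. Proper support of $\psi$ follows from proper support of $\widetilde{T}$: the distributional kernel of $\widetilde{T}$ is supported on $\set{(P(w), Q(w))}{w \in \supp(\omega)} \subset X \times (Y \times [0,3])$, and the properness of $P|_{\supp(\omega)}$ together with properness of $Q$ gives the required properness of projection to either factor after integrating out $t$. The main obstacle is the $L_{}^{2}$-boundedness of $\psi$. I expect to handle this by adapting the estimate of Lemma \ref{LemEboundedness}: writing $\psi$ as a fiber integration of $P_{}^{*}\xi$ against a bounded, $G$-invariant, fiber-wisely compactly supported form $\widetilde{\omega} \in \Omega_{}^{k+1}(\widetilde{\Wund})$ that absorbs both $\omega$ and $\dif t$, and then bounding $\|\psi(\xi)\|_{L^{2}(Y)}^{2}$ via $\int_{X}|\xi|_{}^{2}\,\widetilde{p}_{I}^{}(|\widetilde{\omega}|^{2}\mathrm{vol}_{\widetilde{\Wund}}^{})$, whose integrand is a bounded $G$-invariant function on the $G$-compact quotient, hence bounded uniformly. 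This step is where one must be careful that all the measures and the compactness of $[0,3]$ combine to give a finite constant independent of $\xi$.
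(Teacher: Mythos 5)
Your proposal is correct and takes essentially the same approach as the paper: the paper's $\psi(\xi) := \int_{[0,3]} Q_{I}^{}(P^{*}\xi \wedge \omega)$ is exactly your $K \circ \widetilde{T}$, and the chain-homotopy identity is derived the same way (Stokes on the $[0,3]$ factor, plus the fact that $Q_{I}^{} e_{\omega}^{} P^{*}$ is a chain map because $\omega$ is closed with fiber-wise compact support). For the $L^{2}$-boundedness the paper's argument is a bit crisper than your sketch: it applies Lemma \ref{LemEboundedness} to $\widetilde{T} = Q_{I}^{} e_{\omega}^{} P^{*}$ viewed over the $G$-compact base $Y\times[0,3]$, and then observes that $\int_{[0,3]}$ is $L^{2}$-bounded simply because $[0,3]$ has finite measure, which is the same factorization you introduce but resolved more directly.
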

\begin{proof}
Let $\xi \in \Omega_{c}^{*}(X)$
and $\theta := Q_{I}^{}(P^{*}\xi \wedge \omega) \in \Omega_{c}^{*}(Y \times [0,3])$.
Then it is easy to see that 
\begin{align*}
\int_{[0,3]}\dif \theta = -\dif
 \parens{ \int_{[0,3]} \theta } + \parens{ i_{3}^{*}\theta - i_{0}^{*}\theta },
\end{align*}
where $i_{t}^{} \colon Y \times \{ t \} \hookrightarrow Y \times [0,3]$
denotes the inclusion map.
Note that $i_{t}^{*}\theta = T_{P(\cdot, t), \omega(\cdot, t)}^{} \xi$.

Now, set $\psi \colon \Omega_{c}^{*}(X) \to \Omega_{c}^{*}(Y)$
 by the formula; $\psi(\xi) := \int_{[0,3]} Q_{I}^{}(P^{*}\xi \wedge \omega)$
for $\xi \in \Omega_{c}^{*}(X)$.
Note that the identity map $L^{1}([0,3]) \to L^{2}([0,3])$ is a continuous inclusion
due to the finiteness of $\mathrm{vol}([0,3])$
hence, the map $\int_{[0,3]} \colon \Omega_{c}^{*}(Y \times [0,3]) \to \Omega_{c}^{*}(Y)$
is $L^{2}$-bounded.
Moreover, since $P^{*}\xi \wedge \omega$
vanishes at the boundary of each fiber of $\widetilde{\Wund}$,
the integration along the fiber commutes with taking exterior derivative,
in particular, 
$$
\dif \theta = \dif Q_{I}^{}(P^{*}\xi \wedge \omega) = Q_{I}^{} \dif (P^{*}\xi \wedge \omega) = 
Q_{I}^{} (P^{*} (\dif \xi) \wedge \omega).
$$
To  conclude, we obtain
$$
\psi (\dif \xi) = \int_{[0,3]} \dif Q_{I}^{}(P^{*}\xi \wedge \omega)
= -\dif \psi (\xi) + T_{P(\cdot, 3), \omega(\cdot, 3)}^{} \xi - T_{P(\cdot, 0), \omega(\cdot, 0)}^{} \xi,
$$
\end{proof}
\begin{proof}[Proof of Proposition \ref{KeyPropChainHomotopy}]
We need to construct $\widetilde{\Wund}$ and $P$ as above satisfying
$T_{P(\cdot,0), \omega(\cdot,0)}^{} = T_{p_{1}^{},\omega_{1}^{}}^{}$
and $T_{P(\cdot,3), \omega(\cdot,3)}^{} = T_{p_{2}^{},\omega_{2}^{}}^{}$.

Let $h \colon Y \times [0,3] \to X$ be a re-parametrized $G$-homotopy between
$p_{1}^{}( \cdot, 0)$ and $p_{2}^{}( \cdot, 0)$,
that is, 
$h$ is a $G$-equivariant smooth map satisfying
\begin{align*}
h(y,t) &= p_{1}^{}(y, 0) \qqfor t\in [0,1]
\\
\qqand
h(y,t) &= p_{2}^{}(y, 0) \qqfor t\in [2,3].
\end{align*}
here $G$ acts on $[0,3]$ trivially.
Moreover, consider the following fiber product
$\Wund_{1}^{} \times_{Y}^{} \Wund_{2}^{} =
 \set{(y_{1}^{}, w_{1}^{}),(y_{2}^{}, w_{2}^{}) \in \Wund_{1}^{} \times \Wund_{2}^{} }
 {y_{0}^{} = y_{1}^{}}$.
Let us introduce a smooth map $\map{\chi}{[0,3]}{[0,1]}$
satisfying that
\begin{align*}
\chi(t) &= 0 \qqfor t \in \left[ 0,\tfrac{1}{10} \right) \cup \left( \tfrac{29}{10}, 3 \right]
\\
\qqand
\chi(t) &= 1 \qqfor \quad t \in \parens{ \tfrac{9}{10}, \tfrac{21}{10} }.
\end{align*}
Then 
\begin{align*}
\widetilde{h} \colon \parens{\Wund_{1}^{} \times_{Y}^{} \Wund_{2}^{} }\times [0,3]
\to & X
\\
((y,t),w_{1}^{},w_{2}^{}) \mapsto &
    \begin{cases}
      p_{1}^{}(y, (1-\chi(t))w_{1}^{})
 & \qqfor t\in [0,1],\\
      h(y,t)
 & \qqfor t\in [1,2], \\
      p_{2}^{}(y, (1-\chi(t))w_{2}^{})
 & \qqfor t\in [2,3].
    \end{cases}
\end{align*}
This $\widetilde{h}$ is submersion as long as $\chi(t) \neq 1$
due to the submergence of $p_{1}^{}$ and $p_{2}^{}$.
Let $BX:= \set{ v \in TX}{\norm{v} < 1}$ be the unit disk tangent bundle
and consider the pull-back bundle
$ \widetilde{\Wund}:= \widetilde{h}_{}^{*} BX$
and let us regard it as a bundle over $Y \times [0,3]$
and set 
\begin{align*}
P \colon \quad \widetilde{\Wund} \to& X
\\
((y,t), w_{1}^{}, w_{2}^{}, v) \mapsto &
\exp_{\widetilde{h}((y,t),w_{1}^{},w_{2}^{}) }^{}(\chi(t) v).
\end{align*}
Due to the $(\chi(t) v)$-component, $P$ is submersion
also when $\chi(t) \neq 0$ not only when $\chi(t) \neq 1$.

Moreover, define $\omega \in \Omega_{}^{*}(\Wund)$ as 
$\omega := \pi_{1}^{*} \omega_{1}^{} \wedge \pi_{2}^{*} \omega_{2}^{} \wedge 
\widetilde{h}_{}^{*} \omega_{BX}^{}$,
where $\pi_{j}^{} \colon \widetilde{\Wund} \twoheadrightarrow \Wund_{j}$
for $j=1,2$ and $\omega_{BX}^{} \in \Omega_{}^{*}(BX)$ is a $G$-invariant
differential with fiber-wisely compact support satisfying $\int_{BX_{x}^{}} \omega_{BX}^{} =1$.
These $\widetilde{\Wund}$, $P$ and $\omega$ satisfy the assumption of Lemma \ref{LemChainHomotopy}.

It is easy to see that
$T_{P(\cdot,0), \omega(\cdot,0)}^{} = T_{p_{1}^{},\omega_{1}^{}}^{}$
and $T_{P(\cdot,3), \omega(\cdot,3)}^{} = T_{p_{2}^{},\omega_{2}^{}}^{}$
as follows.
For the simplicity, let 
$\pi \colon \widetilde{\Wund}_{Y \times \{0\}} \twoheadrightarrow \Wund_{1}^{}$
denote the projection.
Note that
$P(y,0) = p_{1}^{} \pi $ and we can write
$\omega (\cdot,0) = \pi_{}^{*}\omega_{1}^{} \wedge \widetilde{\omega}$,
using some $\widetilde{\omega} \in \Omega_{}^{*}(\widetilde{\Wund}_{Y \times \{0\}} )$ 
satisfying $\pi_{I}^{} \widetilde{\omega} = 1$.
Then we obtain that
\begin{align*}
T_{P(\cdot,0), \omega(\cdot,0)}^{}(\xi)
&= (q_{1}^{})_{I}^{}\pi_{I}^{} (\pi_{}^{*} p_{1}^{*} \xi \wedge \pi_{}^{*}\omega_{1}^{} \wedge \widetilde{\omega})
\\
&= (q_{1}^{})_{I}^{}\pi_{I}^{} (\pi_{}^{*} ( p_{1}^{*} \xi \wedge \omega_{1}^{})  \wedge \widetilde{\omega})
\\
&= (q_{1}^{})_{I}^{} (( p_{1}^{*} \xi \wedge \omega_{1}^{})  \wedge \pi_{I}^{} \widetilde{\omega})
\\
&= (q_{1}^{})_{I}^{} (p_{1}^{*} \xi \wedge \omega_{1}^{})
\qquad = \quad T_{p_{1}^{},\omega_{1}^{}}^{}(\xi),
\end{align*}
and similarly, $T_{P(\cdot,3), \omega(\cdot,3)}^{} = T_{p_{2}^{},\omega_{2}^{}}^{}$.
\end{proof}
Now let us define a map $T \in \mathbb{L}(\E_{X}^{}, \E_{Y}^{})$
which satisfies the assumption of Lemma \ref{LemHilSkPerturb}.
First, remark that our map $\map{f}{Y}{X}$ is a proper map
by Lemma \ref{LemPropernessOfEquivMap}.
\begin{dfn}
\label{ConstructionOfTf}
Let $BX:= \set{ v \in TX}{\norm{v} < 1}$ be the unit disk tangent bundle
and let $\Wund:=f^{*}BX$ be the pull-back on $Y$, that is,
$\Wund = \set{(y,v) \in Y \times BX} {v \in BX|_{f(y)}^{}}$.
Let $\widetilde{f} \colon \Wund \to BX$ be 
a map given by $\widetilde{f} (x,v) := (f(x),v) $.
Since the action of $G$ on $X$ is isometric
and $f$ is $G$-equivariant, $G$ acts on $BX$ and also on $\Wund$.
Consider a $G$-equivariant submersion given by the formula;
\begin{align}
p \colon \quad \Wund & \to X
\nonumber\\
(y,v) & \mapsto \exp_{f(y)}^{}(v). 
\end{align}
Let us fix a $G$-invariant $\R$-valued closed $n$-form $\omega_{0}^{} \in \Omega^{n}(BX)$
with fiber-wisely compact support
whose integral along the fiber is always equal to $1$,
and let $\omega := \widetilde{f}_{}^{*}\omega_{0}^{} \in \Omega^{n}(\Wund)$
For these $\Wund$, $p$ and $\omega$, let us set $T := T_{p, \omega}^{}$.
\end{dfn}
\begin{lem}
The adjoint with respect to quadratic forms $Q_{X}^{}$ and $Q_{Y}^{}$
is given by $T_{}'= p_{I}^{} e_{\omega}^{}q_{}^{*}$.
\end{lem}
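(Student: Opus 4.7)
The plan is to compute both sides of the claimed equation $Q_{Y}^{}(T\xi, \nu)(\gm) = Q_{X}^{}(\xi, T_{}^{\prime}\nu)(\gm)$ directly from Definition \ref{DfnQuadraticForSgn}, transport the resulting integrals to the common total space $\Wund$, and match them using the projection formulas for $q_{I}^{}$ and $p_{I}^{}$ together with the $G$-equivariance of $p$, $q$, and $\omega$. By degree reasons I may fix $\xi \in \Omega_{c}^{\ell}(X)$ and $\nu \in \Omega_{c}^{n-\ell}(Y)$, and since $\omega$ is $\R$-valued, the left hand side unfolds to
$$Q_{Y}^{}(T\xi, \nu)(\gm) = i^{\ell(n-\ell)} \Delta(\gm)^{-\half} \int_{Y} q_{I}^{}(p^{*}\bar\xi \wedge \omega) \wedge \gm[\nu].$$

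First I would push this integral from $Y$ up to $\Wund$: graded commutativity moves $\gm[\nu]$ to the left with a sign $(-1)^{\ell(n-\ell)}$, after which the projection formula $q_{I}^{}(q^{*}\beta \wedge \alpha) = \beta \wedge q_{I}^{}(\alpha)$, valid because $\omega$ is fiber-wise compactly supported on $q$, together with the identity $\int_{Y} q_{I}^{} = \int_{\Wund}$ on top-degree fiber-wise compactly supported forms, rewrites the integrand as a form on $\Wund$. Then, within $\Wund$, I would commute $q^{*}\gm[\nu]$ and $p^{*}\bar\xi$ past each other with a sign $(-1)^{(n-\ell)\ell}$, which exactly cancels the previous sign, leaving
$$\int_{Y} q_{I}^{}(p^{*}\bar\xi \wedge \omega) \wedge \gm[\nu] = \int_{\Wund} p^{*}\bar\xi \wedge q^{*}\gm[\nu] \wedge \omega.$$
Since $p|_{\supp(\omega)}$ is proper, the form $q^{*}\gm[\nu] \wedge \omega$ is fiber-wise compactly supported on $p$, so the projection formula for $p_{I}^{}$, i.e.\ $\int_{\Wund} p^{*}\alpha \wedge \beta = \int_{X} \alpha \wedge p_{I}^{}(\beta)$, gives
$$\int_{\Wund} p^{*}\bar\xi \wedge q^{*}\gm[\nu] \wedge \omega = \int_{X} \bar\xi \wedge p_{I}^{}(q^{*}\gm[\nu] \wedge \omega).$$

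Finally, the $G$-equivariance of $p$ and $q$ and the $G$-invariance of $\omega$ imply that $p_{I}^{}$ and the pullbacks intertwine the $G$-action, so $p_{I}^{}(q^{*}\gm[\nu] \wedge \omega) = \gm[p_{I}^{}(q^{*}\nu \wedge \omega)] = \gm[T_{}^{\prime}\nu]$. Substituting identifies the right hand side with $Q_{X}^{}(\xi, T_{}^{\prime}\nu)(\gm)$, as required; density of $\Omega_{c}^{*}$ in $\E_{X}^{}$ and $\E_{Y}^{}$, together with the fact that $T_{}^{\prime}$ has the same structure as $T = T_{p,\omega}^{}$ with the roles of $p$ and $q$ swapped and hence belongs to $\mathbb{L}(\E_{Y}^{}, \E_{X}^{})$ by Lemma \ref{LemEboundedness}, extends the identity to the full Hilbert modules. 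The only delicate point I anticipate is the careful bookkeeping of signs in the two projection formulas and in the two graded commutativity steps; the rest is essentially algebraic manipulation once the right convention for integration along the fiber is fixed.
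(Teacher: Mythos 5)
Your proof is correct and follows essentially the same route as the paper: push the integral up to the total space $W$ via the projection formulas for $q_{I}^{}$ and $p_{I}^{}$ together with the Fubini-type identities $\int_{Y}q_{I}^{}=\int_{W}=\int_{X}p_{I}^{}$ on appropriately supported forms, then finish with $G$-equivariance. The only presentational difference is that the paper first observes that $\deg(\omega)=\dim X$ is even (so $\omega$ commutes with all forms, eliminating sign chases) and proves the bare pairing identity $\int_{X}p_{I}^{}e_{\omega}^{}q_{}^{*}(\nu)\wedge\xi=\int_{Y}\nu\wedge T(\xi)$ before substituting $\nu\mapsto\bar\nu$, $\xi\mapsto\gm[\xi]$, whereas you carry the $Q$-form and the two mutually cancelling graded-commutation signs through the whole computation; both arrive at the same place.
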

\begin{proof}
Note that $\deg (\omega) = \dim (X)$ is even, hence,
$\omega$ commutes with other differential forms.
For $\nu \in \Omega_{c}^{k}(Y)$ and $\xi \in \Omega_{c}^{n-k}(X)$,
\begin{align*}
\int_{X} p_{I}^{} e_{\omega}^{}q_{}^{*}(\nu) \wedge \xi
&=
\int_{X} p_{I}^{}\parens{ q_{}^{*}\nu \wedge \omega } \wedge \xi
=
\int_{X} p_{I}^{}\parens{ q_{}^{*}\nu \wedge \omega \wedge p_{}^{*} \xi }
=
\int_{BX} q_{}^{*}\nu \wedge \omega \wedge p_{}^{*} \xi
\\
&=
\int_{Y} q_{I} \parens{ q_{}^{*}\nu \wedge p_{}^{*} \xi \wedge \omega }
=
\int_{Y} \nu \wedge q_{I} \parens{ p_{}^{*} \xi \wedge \omega }
=
\int_{Y} \nu \wedge T(\xi).
\end{align*}
Since $Q_{X}^{}(\nu, \xi)(\gm) := i^{k(n-k)}_{}
 \Delta (\gm)_{}^{-\half} \int_{X} \bar{\nu} \wedge \gm[\xi]$,
the proof complete replacing $\nu$ and $\xi$ by $\bar{\nu}$ and $\gm[\xi]$
respectively and using the $G$-invariance of $T$.
\end{proof}
\begin{prop}
\label{PropAssumptionTwo}
There exists $\phi \in \mathbb{L}(\E_{X}^{})$ such that
$1-T_{}'T = \dif_{X}^{}\phi + \phi \dif_{X}^{}$.
\end{prop}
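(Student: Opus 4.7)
The plan is to apply an analogue of Proposition \ref{KeyPropChainHomotopy}, phrased for operators on $\Omega_{c}^{*}(X)$, to the pair $T'T$ and the identity $\id$ on $\Omega_{c}^{*}(X)$.

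First, I would verify that $T$ and $T'$ are chain maps, namely $T\delta_{X}^{} = \delta_{Y}^{} T$ and $T'\delta_{Y}^{} = \delta_{X}^{} T'$. These identities follow from $\dif \omega = 0$, the naturality $\dif \circ p_{}^{*} = p_{}^{*} \circ \dif$, and the commutation of integration along a fiber with $\dif$ for forms with fiber-wise compact support. Consequently $T'T - \id$ commutes with $\delta_{X}^{}$, so a chain-level null-homotopy is the natural target.

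Second, I would invoke Lemma \ref{LemCompositionOfT} (interpreted for the composition $\E_{X}^{} \to \E_{Y}^{} \to \E_{X}^{}$, after rewriting $T' = p_{I}^{} e_{\omega}^{} q_{}^{*}$ in the $T_{P,\omega}^{}$-form with the roles of source and target exchanged) to express $T'T$ as an integration-along-the-fiber operator over the fibered product $\tilde{W} := W \times_{Y}^{} W$. Under this description, $T'T(\xi) = Q_{I}^{}(P_{}^{*}\xi \wedge \Omega)$, where $Q(y, v_{1}^{}, v_{2}^{}) = \exp_{f(y)}^{}(v_{1}^{})$ plays the role of the projection to $X$, $P(y, v_{1}^{}, v_{2}^{}) = \exp_{f(y)}^{}(v_{2}^{})$ plays the role of the submersion to $X$, and $\Omega$ is the wedge of the two pulled-back copies of $\omega$. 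The identity operator admits the trivial analogous presentation with source and target maps both equal to $\id_{X}^{}$.

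Third, since $\map{f}{Y}{X}$ is a $G$-equivariant homotopy equivalence between proper co-compact $G$-manifolds, I would choose a $G$-equivariant homotopy inverse $\map{g}{X}{Y}$ and a $G$-equivariant homotopy $\map{H}{X \times [0,1]}{X}$ from $fg$ to $\id_{X}^{}$ (these exist by standard equivariant homotopy theory in the proper setting). Using $H$, I would then build an interpolating submersion $\widehat{P}\colon \widehat{W} \to X$ over $X \times [0,3]$, modelled on the proof of Proposition \ref{KeyPropChainHomotopy}: the total space $\widehat{W}$ would combine the two fiber coordinates $(v_{1}^{}, v_{2}^{})$ with an auxiliary $BX$-factor, and a cut-off function $\chi \colon [0,3] \to [0,1]$ would be chosen so that at $t=0$ the operator reduces to $T'T$ while at $t=3$ it reduces to $\id$. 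Applying Lemma \ref{LemChainHomotopy} (transcribed from operators $\E_{X}^{} \to \E_{Y}^{}$ to operators $\E_{X}^{} \to \E_{X}^{}$) would then produce a $G$-equivariant properly supported $L_{}^{2}$-bounded operator $\phi$; by Proposition \ref{PropBoundedOpOnEV} this $\phi$ lies in $\mathbb{L}(\E_{X}^{})$, and it would satisfy $\id - T'T = \dif_{X}^{}\phi + \phi \dif_{X}^{}$.

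The principal obstacle is that the fibered product $\tilde{W}$ is not literally a disk bundle over $X$, so Proposition \ref{KeyPropChainHomotopy} cannot be applied out of the box. The technical heart of the argument is the explicit construction of $\widehat{W}$ and $\widehat{P}$: one must carefully insert auxiliary disk factors and modulate with the cutoff $\chi$ so that each end of the interpolation is genuinely of the standard $T_{p,\omega}^{}$-type, and so that the $G$-invariance, proper support, and $L_{}^{2}$-boundedness of the resulting $\phi$ persist throughout the interpolation.
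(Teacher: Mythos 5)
Your proposal diverges from the paper's argument at a crucial point, and the divergence opens a genuine gap. The paper never uses a homotopy inverse $g\colon X\to Y$ in the proof of Proposition~\ref{PropAssumptionTwo}. Instead it splits the problem into three steps. Step~(a) is the one you do see: over the fiber product $\Wund\times_{Y}^{}\Wund$, which is a bona fide disk bundle over $\Wund$, Proposition~\ref{KeyPropChainHomotopy} (applied with $\Wund$ in both slots, via $q_{1}^{},q_{2}^{}\colon \Wund\times_{Y}^{}\Wund\to\Wund$ whose $0$-sections agree) produces a chain homotopy $T_{}^{\prime}T \sim p_{I}^{}e_{\omega}^{}p_{}^{*}$. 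Step~(b), which you omit entirely, is the \emph{degree argument}: since $f$ is an orientation-preserving proper homotopy equivalence, $\widetilde{f}\colon \Wund = f_{}^{*}BX \to BX$ has degree~$1$, hence $p_{I}^{}\circ \widetilde{f}_{}^{*} = (p_{X}^{})_{I}^{}$, and therefore $p_{I}^{}e_{\omega}^{}p_{}^{*} = (p_{X}^{})_{I}^{}e_{\omega_{0}^{}}^{}p_{X}^{*}$ as an \emph{identity} of operators. Step~(c): with the problem now re-expressed over the disk bundle $BX\to X$, Proposition~\ref{KeyPropChainHomotopy} applies once more (using $p_{X}^{}(x,0)=\pi(x,0)$) to homotope $(p_{X}^{})_{I}^{}e_{\omega_{0}^{}}^{}p_{X}^{*}$ to $\pi_{I}^{}e_{\omega_{0}^{}}^{}\pi_{}^{*}=\id$.

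The obstacle you flag at the end --- that $\Wund\times_{Y}^{}\Wund$ is not a disk bundle over $X$ --- is exactly the gap, and your proposed fix does not close it. You want to build an interpolating bundle $\widehat{\Wund}$ over $X\times[0,3]$ and invoke a version of Lemma~\ref{LemChainHomotopy}, with $t=0$ giving $T_{}^{\prime}T$; but $T_{}^{\prime}T = Q_{I}^{}(P_{}^{*}\xi\wedge\Omega)$ is presented over a space fibered over $Y$, not over $X$, so there is no candidate disk bundle over $X\times\{0\}$ that reproduces $T_{}^{\prime}T$ as a $T_{p,\omega}$-type operator. It is precisely step~(b), the degree-1 identity $p_{I}^{}\widetilde{f}_{}^{*}=(p_{X}^{})_{I}^{}$, that transports the problem from $W$-over-$Y$ to $BX$-over-$X$ and makes the final chain homotopy possible. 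Note also that this is where the hypothesis that $f$ is orientation-preserving enters; your argument, based only on a homotopy $H\colon fg\sim\id_{X}$, never uses orientation, which is a warning sign. Finally, the homotopy inverse $g$ and the operator $S$ built from it \emph{are} used in the paper, but in the proof of Theorem~\ref{MainThm} to verify hypothesis~(1) of Lemma~\ref{LemHilSkPerturb} (that $T$ is a quasi-isomorphism); there, one gets $1-ST\sim 0$ and $1-TS\sim 0$. Hypothesis~(2), namely $1-T_{}^{\prime}T=\delta\phi+\phi\delta$ with the \emph{quadratic-form adjoint} $T_{}^{\prime}$ rather than an arbitrary homotopy inverse, is a stronger and more rigid statement, and it is the degree argument --- not any homotopy $H$ --- that makes $T_{}^{\prime}T$ homotope to the identity.
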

\begin{proof}
Consider the fiber product $\Wund \times_{Y}^{} \Wund$
and let $q_{1}^{}$ and $q_{2}^{} \colon \Wund \times_{Y}^{} \Wund \to \Wund$ 
denote the projections given by
$q_{j}^{}(y, v_{1}^{}, v_{2}^{}) := (y, v_{j}^{})$.
Take $\zeta \in \Omega_{c}^{*}(\Wund)$, here $\Wund$ is regarded as 
the first component of $\Wund \times_{Y}^{} \Wund$.
Using the commutativity of the diagram (\ref{DiagramCommqIpstar}),
\begin{align*}
\xymatrix{
 \Omega_{}^{*}(\Wund \times_{Y}^{} \Wund) \ar[d]_{ (q_{2}^{})_{I}^{} }
&
\\ 
 \Omega_{}^{*}(\Wund)
& \Omega_{c}^{*}(\Wund) \ar [d]^{ q_{I}^{} } \ar[lu]_{ q_{1}^{*}}
\\ 
 &\Omega_{}^{*}(Y)\ar[lu]_{q_{}^{*}}
}
\end{align*}
we obtain that
\begin{align*}
e_{\omega}^{}q_{}^{*} q_{I}^{} (\zeta)
&= e_{\omega}^{} (q_{2}^{})_{I}^{}q_{1}^{*} (\zeta)
= (q_{2}^{})_{I}^{} \parens{q_{1}^{*} \zeta} \wedge \omega
= (q_{2}^{})_{I}^{} \parens{q_{1}^{*} \zeta \wedge q_{2}^{*} \omega}
\\
&= (q_{2}^{})_{I}^{} e_{ q_{2}^{*} \omega }^{} q_{1}^{*}(\zeta),
\\
\text{ and hence, } \quad
T_{}^{\prime}T
=
p_{I}^{} e_{\omega}^{}q_{}^{*} q_{I}^{} e_{\omega}^{} p_{}^{*}
&=
p_{I}^{} (q_{2}^{})_{I}^{} e_{ q_{2}^{*} \omega }^{} q_{1}^{*} e_{\omega}^{} p_{}^{*}.
\end{align*}
On the other hand, since $q_{1}^{}(y,0) = q_{2}^{}(y,0)$,
by Proposition \ref{KeyPropChainHomotopy},
there exists a properly supported
$G$-equivariant $L_{}^{2}$-bounded operator 
$\psi_{\Wund}^{} \colon \Omega_{c}^{*}(\Wund) \to \Omega_{c}^{*}(\Wund)$ satisfying
$$
(q_{2}^{})_{I}^{} e_{q_{2}^{*} \omega }^{} q_{2}^{*} - (q_{2}^{})_{I}^{} e_{ q_{2}^{*} \omega }^{} q_{1}^{*}= 
 \dif \psi_{\Wund}^{} + \psi_{\Wund}^{} \dif .
$$
Moreover, it is obvious that
$ (q_{2}^{})_{I}^{} e_{q_{2}^{*} \omega }^{} q_{2}^{*} = \id_{\Omega_{c}^{}(\Wund)}^{}$,
so we obtain 
\begin{align}
p_{I}^{} e_{\omega}^{} p_{}^{*} - T_{}^{\prime}T
&= 
p_{I}^{} \parens{\id_{\Omega_{c}^{}(\Wund)}^{} -  (q_{2}^{})_{I}^{} e_{ q_{2}^{*} \omega }^{} q_{1}^{*} }
e_{\omega}^{} p_{}^{*}
\nonumber\\
&=
 p_{I}^{} \parens{ \dif \psi_{\Wund}^{} + \psi_{\Wund}^{} \dif } e_{\omega}^{} p_{}^{*}
\nonumber\\
&=
 \dif \circ p_{I}^{} \psi_{\Wund}^{} e_{\omega}^{} p_{}^{*} +
p_{I}^{} \psi_{\Wund}^{} e_{\omega}^{} p_{}^{*} \circ \dif.
\label{EqCochainHomotopy010}
\end{align}
Remark that $p_{I}^{}\circ \dif = \dif \circ p_{I}^{}$ 
because the act on differential forms with compact support,
and $e_{\omega}^{} \circ \dif = \dif \circ e_{\omega}^{}$ because $\omega$ is a closed form.

Next let us consider submersion $p_{X}^{} \colon BX \to X$ given by
$(x,v) \mapsto \exp_{x}^{}(v)$.
Note that $p = p_{X}^{} \widetilde{f}$.
\begin{align*}
\xymatrix{
f_{}^{*}BX \ar[rd]_{ p } \ar[r]^{\widetilde{f}}
&
BX \ar[d]^{ p_{X}^{} }
\\ 
&
X
}
\end{align*}
Now we want to check that $p_{I}^{} e_{\omega}^{} p_{}^{*} = (p_{X}^{})_{I}^{} e_{\omega_{0}^{}}^{} p_{X}^{*}$.
For any $\nu \in \Omega_{c}^{*}(X)$ and $\zeta \in \Omega_{c}^{*}(BX)$,
\begin{align*}
\int_{X} \nu \wedge p_{I}^{} \parens{\widetilde{f}_{}^{*} \zeta}
&=
\int_{\Wund} p_{}^{*} \nu \wedge \widetilde{f}_{}^{*} \zeta
=
\int_{\Wund} \widetilde{f}_{}^{*}\parens{ p_{X}^{*} \nu \wedge \zeta}
\\
&=
\mathrm{deg} \parens{ \widetilde{f} }
\int_{BX} p_{X}^{*} \nu \wedge \zeta
 =
\int_{BX} p_{X}^{*} \nu \wedge \zeta
 = 
\int_{X} \nu \wedge (p_{X}^{})_{I}^{} (\zeta),
\end{align*}
since $f$ is an orientation preserving proper homotopy equivalent. In particular,
we obtain
$$
p_{I}^{}\parens{\widetilde{f}_{}^{*} \zeta} = (p_{X}^{})_{I}^{} (\zeta).
$$
Put $\zeta := p_{X}^{*} \xi \wedge \omega_{0}^{}$ for $\xi \in \Omega_{c}^{*}(X)$
to obtain 
\begin{align}
p_{I}^{} e_{\omega}^{} p_{}^{*}(\xi)
=
p_{I}^{} \parens{\widetilde{f}_{}^{*} p_{X}^{*} \xi \wedge \widetilde{f}_{}^{*} \omega_{0}^{}}
& =
p_{I}^{} \parens{ \widetilde{f}_{}^{*} (p_{X}^{*} \xi \wedge \omega_{0}^{})}
\nonumber\\
& = (p_{X}^{})_{I}^{} (p_{X}^{*} \xi \wedge \omega_{0}^{})
= (p_{X}^{})_{I}^{} e_{\omega_{0}^{}}^{} p_{X}^{*} (\xi).
\label{EqCochainHomotopy020}
\end{align}
Let $\pi \colon BX \to X$ be the natural projection.
Since $p_{X}^{}(x,0) = \pi(x,0)$, by Proposition \ref{KeyPropChainHomotopy},
there exists
a properly supported $G$-equivariant $L_{}^{2}$-bounded operator 
$\psi_{X}^{} \colon \Omega_{c}^{*}(X) \to \Omega_{c}^{*}(X)$ satisfying
\begin{align}
\pi_{I}^{} e_{\omega_{0}^{} }^{} \pi_{}^{*} - (p_{X}^{})_{I}^{} e_{\omega_{0}^{}}^{} p_{X}^{*} = 
 \dif \psi_{X}^{} + \psi_{X}^{} \dif. 
\label{EqCochainHomotopy030}
\end{align}
On the other hand, it is obvious that
$ \pi_{I}^{} e_{\omega_{0}^{} }^{} \pi_{}^{*} = \id_{\Omega_{c}^{}(X)}^{}$.
Therefore, combining (\ref{EqCochainHomotopy010}),
 (\ref{EqCochainHomotopy020}) 
 and (\ref{EqCochainHomotopy030}),
we conclude
\begin{align*}
\mathrm{id}_{\Omega_{c}^{}(X)}^{} - T_{}^{\prime}T
= \dif \phi + \phi \dif,
\end{align*}
where $\phi = p_{I}^{} \psi_{\Wund}^{} e_{\omega}^{} p_{}^{*} + \psi_{X}^{}$.
Since $\phi$ is properly supported $G$-invariant $L_{}^{2}$-bounded operator, 
it defines an element in $\mathbb{L}(\E_{X}^{})$.
\end{proof}
\begin{proof}[Proof of Theorem \ref{MainThm}]
First, let us check that
$T$ satisfies the assumption (1) of Lemma \ref{LemHilSkPerturb}.
Since $\omega$ is a closed form and has fiber-wisely compact support,
it follows that $T \delta_{X}^{} = \delta_{Y}^{} T$.
Let $\map{g}{X}{Y}$ be the $G$-equivariant homotopy inverse of $f$
and consider a map $S \in \mathbb{L}(\E_{Y}^{}, \E_{X}^{})$ 
constructed in the same method as $T$ from $g$ instead of $f$
in Definition \ref{ConstructionOfTf}.
By \ref{LemCompositionOfT}, the composition $ST$ is equal to the map
$T_{p}^{} \in \mathbb{L}(\E_{X}^{})$ for $p$ satisfying that
$p(\cdot, 0)$ is $G$-equivariant homotopic to $\mathrm{id}_{X}^{}$.
Then by Proposition \ref{KeyPropChainHomotopy}, 
there exists $\phi_{X}^{} \in \mathbb{L}(\E_{X}^{})$ satisfying that
$ST - (\delta_{X}^{} \phi_{X}^{} + \phi_{X}^{}\delta_{X}^{}) = T_{\mathrm{id}_{X}^{}}^{} = \mathrm{id}_{\E_{X}^{}}^{} $.
Thus, $ST$ induces the identity map on $\Ker(\delta_{X}^{}) / \Im(\delta_{X}^{})$.
Similarly $TS$ induces the identity map on $\Ker(\delta_{Y}^{}) / \Im(\delta_{Y}^{})$,
and hence, $T$ induces an isomorphism 
$\Ker(\delta_{X}^{}) / \Im(\delta_{X}^{}) \to \Ker(\delta_{Y}^{}) / \Im(\delta_{Y}^{})$.

The assumption (2) of Lemma \ref{LemHilSkPerturb}
is obtained from \ref{PropAssumptionTwo}.

Finally, let $\ep(\xi) := (-1)^{k} \xi$ for $\xi \in \Omega_{c}^{k}(X)$.
Clearly, $\ep$ determines an operator $\ep \in \mathbb{L}(\E_{X}^{})$, $\ep^{2}=1$ and
satisfies $\ep^{\prime} = \ep$,
$\ep (\dom (\delta_{X}^{}) ) \subset \dom(\delta_{X}^{})$
and $\ep \delta_{X}^{} = -\delta_{X}^{} \ep$.
Moreover since neither $T$ nor $T_{}'$ changes the order of the differential forms,
$\ep$ commutes with $1-T_{}' T$.
Thus $\ep$ satisfies the assumption (3) of Lemma \ref{LemHilSkPerturb}.
To conclude, we obtain
$\mathrm{ind}_{G}^{}(\partial_{X}^{}) = \Psi (\E_{X}^{}, Q_{X}^{}, \delta_{X}^{}) =
\Psi (\E_{Y}^{}, Q_{Y}^{}, \delta_{Y}^{}) = \mathrm{ind}_{G}^{}(\partial_{Y}^{})$.
\end{proof}
\subsection{On proof of Corollary \ref{MainCor}}
To prove Corollary \ref{MainCor}, we will combine 
\cite[Theorem A]{Fu16} with Theorem \ref{MainThm}.
Suppose, in addition, that $G$ is unimodular and $H_{1}^{}(X; \R) = H_{1}^{}(Y; \R) = \{ 0 \}$.
Let $\map{f} {Y} {X}$ be a $G$-equivariant orientation preserving homotopy invariant map
and consider a $G$-manifold $Z := X \sqcup (-Y)$, 
the disjoint union of $X$ and orientation reversed $Y$.
Let $\partial_{Z}^{}$ be the signature operator, then
we have that 
$\ind_{G}^{}(\partial_{Z}^{}) = \ind_{G}^{}(\partial_{X}^{}) - \ind_{G}^{}(\partial_{Y}^{}) = 0 
\in K_{0}^{}(C^{*}(G))$.
Although the $G$-manifold should be connected in \cite[Theorem A]{Fu16},
however in this case, we can apply it to $Z$ after replacing some arguments in \cite{Fu16} as follows.

\cite[subsection 6.1 and 6.2]{Fu16}
When constructing a $U(1)$-valued cocycle $\alpha \in Z(G;U(1))$ 
from the given line bundle,
we just use a line bundle $L$ over $X$ ignoring $f_{}^{*}$ over $Y$.
When constructing family of line bundles $\{ L_{t}^{} \}$ on which the central extension group
$G_{\alpha^{t}}^{}$ acts,
just construct a family of line bundles $\{ L_{t}^{} \}$ over $X$ in the same way and pull back on $Y$
to obtain a family $\{ f_{}^{*}L_{t}^{} \}$.
To be specific, $f_{}^{*}L_{t}^{}$ is a trivial bundle $Y \times \C$,
equipped with the connection given by $\nabla^{t} = \dif +it f_{}^{*}\eta$ and
the action of $G_{\alpha^{t}}^{}$ is given by
$$
(\gamma, u)(y,z)=(\gamma y,\ \exp[-it f_{}^{*}\psi_{\gamma}(x)]uz)
\qqfor
(\gamma, u) \in G_{\alpha^t}^{}, \ 
y \in Y,\ 
z \in \C=(L_{t})_{x}.
$$
Then consider a family of  $G_{\alpha^{t}}^{}$-line bundles $\{ L_{t}^{} \sqcup f_{}^{*}L_{t}^{} \}$ over $Z$.
We also need the similar replacement in \cite[Definition 7.19]{Fu16} to obtain the global section
on $L_{t}^{} \sqcup f_{}^{*}L_{t}^{}$.
Then the rest parts proceed similarly.
\section{Index of Dirac operators twisted by almost flat bundles}
\label{SectionAlmFlat}
Now we will discuss on the Dirac operators twisted by a family of Hilbert module bundles
$\{ E_{}^{k}\}$ whose curvature tend to zero. 
and prove Theorem \ref{ThmAlmostFlatIndex}.
Such an family is called a family of almost flat bundles.
In this section, it is convenient to formulate the index map
using $K{\!}K$-theory.
\subsection{$G$-index map in $K{\!}K$-theory}
\begin{lem}
{\upshape{\cite[Theorem 3.11]{Kas88}}}
Let $G$ be a second countable locally compact Hausdorff group.
For any $G$-algebras $A$ and $B$ there exists
a natural homomorphism
$$
\map{j^{G} } {K{\!}K^{G}(A,B)} {K{\!}K(C^{*}(G;A), C^{*}(G;B))}
$$
Furthermore
if $x\in K{\!}K^{G}(A,B)$ and $y\in K{\!}K^{G}(B,D)$,
then $j^{G}(x\hattensor_{B}^{}y) = j^{G}(x) \hattensor_{C^{*}(G;B)}^{}
j^{G}(y)$.
\QED
\end{lem}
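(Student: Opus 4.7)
\medskip

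The plan is to construct $j^G$ explicitly at the level of cycles and then verify its behaviour on Kasparov products. Given a $G$-equivariant Kasparov $(A,B)$-bimodule $(\mathcal{E},\phi,F)$ representing $x\in KK^G(A,B)$, I would first form the space $C_c(G,\mathcal{E})$ of compactly supported $\mathcal{E}$-valued functions on $G$ and turn it into a right $C^*(G;B)$-pre-module by defining the right action and $C^*(G;B)$-valued inner product by convolution-type formulas using the $G$-action on $\mathcal{E}$ (analogous to (\ref{ActionOnCalE}) and (\ref{ScalarProdOnCalE})). Completing in the resulting norm produces a Hilbert $C^*(G;B)$-module $\widetilde{\mathcal{E}}$. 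The left action $\widetilde{\phi}\colon C^*(G;A)\to \mathbb{L}(\widetilde{\mathcal{E}})$ is defined on $C_c(G,A)$ by an analogous convolution formula employing $\phi$ and the $G$-action, and extends to the full crossed product by standard arguments.

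The next step is to define $\widetilde{F}$ on $C_c(G,\mathcal{E})$ by pointwise application of $F$, and to check the Kasparov conditions over $C^*(G;B)$. Self-adjointness modulo compacts and the condition on $\widetilde{F}^2-1$ follow from the corresponding properties of $F$ after integration over compactly supported elements of $C^*(G;A)$, in the same style as the proof of Proposition \ref{PropBoundedOpOnEV}. The condition $[\widetilde{F},\widetilde{\phi}(a)]\in \mathbb{K}(\widetilde{\mathcal{E}})$ for $a\in C^*(G;A)$ is the delicate point: it is not enough that $[F,\phi(a)]$ is compact for each $a\in A$, one must use the fact that $g\mapsto g(F)-F$ is norm-continuous into $\mathbb{K}(\mathcal{E})$ modulo $\phi(A)$, which is part of the definition of a $G$-equivariant Kasparov cycle. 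The well-definedness of $j^G(x)$ on $KK$-classes follows by applying the same construction to operator homotopies and to degenerate cycles, both of which survive the descent procedure.

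For multiplicativity $j^G(x\hattensor_B y)=j^G(x)\hattensor_{C^*(G;B)} j^G(y)$, the strategy is to use Kasparov's characterisation of the internal product via connections. Choose cycles $(\mathcal{E}_1,\phi_1,F_1)$ for $x$ and $(\mathcal{E}_2,\phi_2,F_2)$ for $y$, and a representative $F$ on $\mathcal{E}_1\hattensor_B \mathcal{E}_2$ which is an $F_2$-connection and satisfies the positivity condition of Kasparov's technical theorem. One checks that the canonical isomorphism $\widetilde{\mathcal{E}_1\hattensor_B \mathcal{E}_2}\cong \widetilde{\mathcal{E}_1}\hattensor_{C^*(G;B)}\widetilde{\mathcal{E}_2}$ transports $\widetilde{F}$ to an $\widetilde{F_2}$-connection for $\widetilde{F_1}$, and that positivity is preserved under descent; uniqueness of the Kasparov product then forces the equality of classes.

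The main obstacle is the multiplicativity statement: verifying that the descent of a connection is again a connection, and that the positivity condition survives integration over $G$, is the technical heart of the proof and requires careful use of $G$-continuity estimates together with Kasparov's technical theorem. The descent of a single cycle is comparatively routine once the Hilbert module framework is in place.
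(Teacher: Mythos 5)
The paper does not give a proof of this lemma: it is stated as a citation of Kasparov's Theorem~3.11 (\cite{Kas88}) and is used as a black box. Your outline is essentially Kasparov's original descent construction: forming $\widetilde{\mathcal{E}}$ by completing $C_c(G,\mathcal{E})$ with convolution-type module structure and $C^*(G;B)$-valued inner product, extending $\phi$ to a representation of $C^*(G;A)$, applying $F$ pointwise, and using the almost-invariance condition $\phi(a)(g\!\cdot\! F - F)\in\mathbb{K}(\mathcal{E})$ from the definition of an equivariant Kasparov cycle to get compactness of commutators over the crossed product. The multiplicativity argument via $F_2$-connections and the uniqueness clause of Kasparov's product is also the standard route. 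The one technical step your sketch glosses over is that one must identify $\mathbb{K}(\widetilde{\mathcal{E}})$ with (the descent of) $C^*(G;\mathbb{K}(\mathcal{E}))$, and similarly verify that the natural map $\widetilde{\mathcal{E}_1\hattensor_B\mathcal{E}_2}\to\widetilde{\mathcal{E}_1}\hattensor_{C^*(G;B)}\widetilde{\mathcal{E}_2}$ is an honest unitary isomorphism of Hilbert modules (not just a dense-range map on the $C_c$ level); both are routine but need to be said. Since the paper's treatment is a bare citation, there is no alternative argument to compare against, and your proposal is a faithful summary of the proof in \cite{Kas88}.
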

\begin{lem}
\label{CutoffKHomologyClass}
Using a cut-off function $c\in C_c(X)$,
one can define an idempotent $p \in C_c(G;C_0(X))$
by the formula;
$$\check{c}(\gm)(x) = \sqrt{c(x) c(\gminv x) \Delta(\gm)_{}^{-1}}.$$
In particular it defines an element of K-homology
denoted by $[c]\in K_0(C^{*}(G;C_0(X)))$.
Moreover the element of $K$-homology
$[c]\in K_0(C^{*}(G;C_0(X)))$ does not depend on
the choice of cut-off functions.
\QED
\end{lem}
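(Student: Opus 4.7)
The plan is to work in the dense convolution subalgebra $C_{c}(G; C_{0}(X))$ of the crossed product $C^{*}(G; C_{0}(X))$ and verify directly that $\check{c}$ is a self-adjoint idempotent there. Recall that in this crossed product the convolution and adjoint are
\begin{align*}
(f \ast g)(\gm)(x) &= \int_{G} f(\eta)(x)\, g(\eta_{}^{-1}\gm)(\eta_{}^{-1}x)\, \dif \eta,
\\
f_{}^{\ast}(\gm)(x) &= \Delta(\gm)_{}^{-1}\, \overline{f(\gm_{}^{-1})(\gm_{}^{-1}x)}.
\end{align*}
First I would compute $(\check{c} \ast \check{c})(\gm)(x)$; the two square-root factors under the integral sign split cleanly into $\sqrt{c(x)c(\gm_{}^{-1}x)}$ times $c(\eta_{}^{-1}x)$ times the modular prefactors $\sqrt{\Delta(\eta)_{}^{-1}\Delta(\eta_{}^{-1}\gm)_{}^{-1}}$. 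Using $\Delta(\eta_{}^{-1}\gm) = \Delta(\eta)_{}^{-1}\Delta(\gm)$ the $\eta$-dependence in the modular factor collapses to $\sqrt{\Delta(\gm)_{}^{-1}}$, and then the defining property of the cut-off function $\int_{G} c(\eta_{}^{-1}x)\,\dif \eta = 1$ gives exactly $\check{c}(\gm)(x)$.

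Next I would verify $\check{c}_{}^{\ast} = \check{c}$ by a direct substitution: since $c$ is real-valued, $\check{c}_{}^{\ast}(\gm)(x) = \Delta(\gm)_{}^{-1}\sqrt{c(\gm_{}^{-1}x)c(x)\Delta(\gm_{}^{-1})_{}^{-1}}$, and $\Delta(\gm_{}^{-1})_{}^{-1}=\Delta(\gm)$ collapses this back to $\check{c}(\gm)(x)$. This gives a bona fide projection in the unitization, hence a class $[c]\in K_{0}(C^{*}(G;C_{0}(X)))$.

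For the independence of $c$, the cleanest route is Murray-von Neumann equivalence. Given two cut-off functions $c_{0}$ and $c_{1}$, I would introduce
$$ u(\gm)(x) := \sqrt{c_{0}(x)\, c_{1}(\gm_{}^{-1}x)\, \Delta(\gm)_{}^{-1}} \in C_{c}(G; C_{0}(X)) $$
and compute $u \ast u_{}^{\ast}$ and $u_{}^{\ast}\ast u$ by the same bookkeeping as above; the outer $\sqrt{c_{j}}$ factors survive while the integral over the $c_{1-j}(\eta_{}^{-1}x)$ term is $1$, yielding $u\ast u_{}^{\ast}=\check{c_{0}}$ and $u_{}^{\ast}\ast u = \check{c_{1}}$. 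Alternatively one can use the straight-line homotopy $c_{t}=(1-t)c_{0}+t c_{1}$, which is again a cut-off function because the cut-off condition is affine, and observe that $\check{c_{t}}$ is a norm-continuous path of projections.

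The only real bookkeeping obstacle is keeping the modular function consistent throughout; everything else reduces to the single identity $\int_{G} c(\eta_{}^{-1}x)\,\dif \eta = 1$. Once the convention (\ref{ActionOnCalE})-(\ref{ScalarProdOnCalE}) is fixed, all three assertions of the lemma follow from short computations in $C_{c}(G;C_{0}(X))$ as sketched above.
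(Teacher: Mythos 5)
The paper records this lemma with only a terminating box and no proof, treating it as a standard fact about crossed products, so there is no authorial argument to compare with; your sketch is the verification that the box elides, and it is correct. The modular factor $\sqrt{\Delta(\eta)^{-1}\Delta(\eta^{-1}\gm)^{-1}}=\sqrt{\Delta(\gm)^{-1}}$ pulls out of the convolution integral, leaving only $\int_{G}c(\eta^{-1}x)\,\dif\eta=1$, which gives $\check{c}\ast\check{c}=\check{c}$; and $\Delta(\gm^{-1})^{-1}=\Delta(\gm)$ together with real-valuedness of $c$ gives $\check{c}^{\ast}=\check{c}$. Your intertwiner $u(\gm)(x)=\sqrt{c_{0}(x)\,c_{1}(\gm^{-1}x)\,\Delta(\gm)^{-1}}$ is the natural partial isometry: computing $u^{\ast}$ gives $\sqrt{c_{1}(x)\,c_{0}(\gm^{-1}x)\,\Delta(\gm)^{-1}}$, that is, $u$ with $c_{0}$ and $c_{1}$ swapped, and the same bookkeeping yields $u\ast u^{\ast}=\check{c_{0}}$ and $u^{\ast}\ast u=\check{c_{1}}$, so the two idempotents are Murray--von Neumann equivalent. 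The straight-line homotopy is equally valid, since the cut-off condition and non-negativity are preserved under convex combination, and the uniform compactness of supports for $t\in[0,1]$ gives norm-continuity of $t\mapsto\check{c_{t}}$. The one step worth making explicit is that $\check{c}$ really lies in $C_{c}(G;C_{0}(X))$: for each $\gm$ the function $x\mapsto\check{c}(\gm)(x)$ is continuous with support in $\supp(c)$, properness of the action forces $\set{\gm\in G}{\gm^{-1}\supp(c)\cap\supp(c)\ne\emptyset}$ to be compact, and joint continuity of $(\gm,x)\mapsto\check{c}(\gm)(x)$ on that compact set gives continuity of $\gm\mapsto\check{c}(\gm)\in C_{0}(X)$ with compact support in $\gm$.
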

\begin{dfn}[$G$-Index]\cite[Theorem 5.6.]{Kas16}
\label{BCmap}
Define
$$
\map{\mu_{G}}{K{\!}K^{G}(C_0(X), \C)}{K_{0}(C^*(G))}
$$
as the composition of
\begin{itemize}
\item
$\map{j^{G}}{K{\!}K^{G}(C_0(X), \C)}
{K{\!}K(C^*(G;C_0(X)) , C^*(G))}$
and
\item
$\map{[c]\hattensor}{K{\!}K(C^*(G;C_0(X)) , C^*(G))}
{K{\!}K(\C , C^*(G))} \simeq K_{0}(C^*(G))$,
i.e.,
\begin{eqnarray*}
\mu_{G}(\mathchar`-) :=
[c]\hattensor_{C^*(G;C_0(X))} j^{G}(\mathchar`-)
\quad \in K_{0}(C^*(G) ).
\end{eqnarray*}
\end{itemize}
\end{dfn}
\begin{rem}
\label{RemReductionOnDirac}
As in \cite[Remark 4.4.]{Kas16} or \cite[Subsection 5.2]{Fu16},
it is sufficient to consider only in the case of Dirac type operators
for calculating the index.
\end{rem}
Let $B$ be a unital $C^{*}$-algebra.
Following the definition \ref{BCmap},
we define the index maps with coefficients;
\begin{dfn}
For unital $C^*$-algebras $B$,
define the index map
$$
\map{\mathrm{ind}_{G}^{}}{K\!K^{G}(C_{0}^{}(X), B)}
{K_{0}(C^{*}(G;B))}
$$
as the composition of
\begin{itemize}
\item
$\map{j^{G}}{K\!K^{G}(C_{0}^{}(X), B)}
{K\!K(C^*(G;C_{0}^{}(X)) ,\: C^{*}(G;B))}$
and
\item
$\map{[c]\hattensor}{K\!K(C^*(G;C_{0}^{}(X)) ,\: C^{*}(G;B))}
{K_{0}(C^{*}(G;B))}$,
i.e.,
\begin{eqnarray*}
\mathrm{ind}_{G}^{}(\mathchar`-) :=
[c]\hattensor_{C^*(G;C_{0}^{}(X))} j^{G}(\mathchar`-)
\quad \in K_{0}(C^{*}(G;B)).
\end{eqnarray*}
\end{itemize}
The crossed product $C^{*}(G;B)$ is either maximal or reduced one.
In this paper, we assume that 
$G$ acts on $B$ trivially. Then
$C^{*}_{\Max}(G; B)$ and $C^{*}_{\red}(G; B)$
will be naturally identified with
$C^{*}_{\Max}(G) {\otimes_{\Max}^{}} B$ and
$C^{*}_{\red}(G) {\otimes_{\min}^{}} B$ respectively.
Moreover if $B$ is nuclear, ${\otimes_{\Max}^{}} B$ and ${\otimes_{\min}^{}} B$ are identified.
\end{dfn}
\begin{dfn}
Let $E$ be a finitely generated projective $(\Z/2\Z)$-graded Hilbert $B$-module $G$-bundle.
Define $C_{0}^{} \parens{X; E}$ as a space consisting of
sections $\map {s} {X} {E}$ vanishing at infinity.
It is considered as a $\Z/2\Z$-graded Hilbert $C_{0}^{}\parens{X; B}$-module
with the right action given by point-wise multiplications
and the scalar product given by
$$
\angles{ s_{1}^{}, s_{2}^{}} (x) := \angles{s_{1}^{}(x), s_{2}^{}(x)}_{E_{x}^{}}
\in C_{0}^{}\parens{X;B}.
$$
\end{dfn}
\begin{rem}
The $C^{*}$-algebra
$ C_{0}^{}\parens{X; B}$ consisting of $B$-valued function vanishing at infinity
is naturally identified with $C_{0}^{}(X)\hattensor B$
by \cite[6.4.17. Theorem]{We93}.
Similarly, if $E = X\times E_{0}^{}$ is a trivial Hilbert $B$-module bundle over $X$,
then $ C_{0}^{}\parens{X; E}$
is naturally identified with $C_{0}^{}(X)\hattensor E_{0}^{}$ as Hilbert
$\parens{ C_{0}^{}\parens{X; B} \cong C_{0}^{}(X)\hattensor B }$-modules.
\end{rem}
\begin{dfn}
$E$ define an element in $K\!K$-theory
\begin{eqnarray*}
[E] = \parens{ C_{0}^{} \parens{X; E},\ 0 }
\in K\!K^{G}\parens{C_{0}^{}(X) ,\ C_{0}^{}(X)\hattensor B}.
\end{eqnarray*}
The action of $C_{0}^{}(X)$ on $C_{0}^{} \parens{X; E}$ is the
point-wise multiplication.
\end{dfn}
\begin{dfn}
Let $E$ be a finitely generated Hilbert $B$-module bundle over $X$
equipped with a Hermitian connection $\nabla_{}^{E}$.
Let $R_{}^{E} \in C_{}^{\infty}\parens{
X;\ \End(E) \otimes \turnV{2}(T_{}^{*}(X)) }$
denote its curvature.
Then define its norm as follows:
First, define the point-wise norm as the operator norm given by
$$
\norm{R_{}^{E}}_{x}^{} := \sup \set{ \norm{R_{}^{E}(u \wedge v)}_{\mathrm{\mathbb{L}(E)}}^{} }
{u, v \in T_{x}^{}X,\ \norm{u \wedge v} = 1}
\qqfor x \in X.
$$
Then define the global norm as the supremum in $x \in X$ of the point-wise norm;
$\norm{R_{}^{E}} := {\displaystyle \sup_{x\in X}\norm{R_{}^{E}}_{x}^{}}$
\end{dfn}
To describe the Theorem which we will prove,
\setcounter{thmalph}{2}
\begin{thmalph}
\label{ThmAlmostFlatIndex}
Let $X$ be a complete oriented Riemannian manifold
and let $G$ be a locally compact Hausdorff group
acting on $X$ isometrically, properly and co-compactly.
Moreover we assume that $X$ is simply connected.
Let $D$ be a $G$-invariant properly supported elliptic operator of order $0$
on $G$-Hermitian vector bundle over $X$.

Then there exists $\ep > 0$ satisfying the following:
for any finitely generated projective
Hilbert $B$-module $G$-bundle $E$ over $X$
equipped with a $G$-invariant Hermitian connection
such that $\norm{R_{}^{E}} < \ep$,
we have 
$$
\mathrm{ind}_{G}^{}\parens{ [E] \hattensor_{C_{0}^{}(X)}^{} [D] }= 0 
\quad \in K_{0}^{} \parens{C_{\Max}^{*}(G) {\otimes_{\Max}^{}} B}
$$
if $\mathrm{ind}_{G}^{}([D]) =0 \in K_{0}^{}(C_{\Max}^{*}(G))$.
If we only consider commutative $C^{*}$-algebras for $B$, then the same conclusion
is also valid for $C_{\red}^{*}(G)$.
\end{thmalph}
\subsection{Infinite product of $C^{*}$-algebras}
\label{SectionAssembling}
\begin{dfn}
Let $B_{k}$ be a sequence of $C^{*}$-algebras.
\begin{itemize}
\item
Define $\prod\limits_{k\in \N} B_{k}$ 
as the $C^{*}$-algebra consisting of norm-bounded sequences
$$
\prod_{k\in \N} B_{k}:=
\set{
\braces{b_1, b_2, \ldots} }{
b_{k} \in B_{k},\
\sup_{k} \{ \norm{b_{k}}_{B_{k}} \} < \infty
}.
$$
The norm of $B_{k}$ is given by 
$\norm{\braces{b_1, b_2, \ldots}}_{\prod B_{k}}:=
\sup_{k} \{ \norm{b_{k}}_{B_{k}}  \}.$
\item
Let $\bigoplus\limits_{k\in \N} B_{k}$ a closed two-sided ideal  in $\prod_{k\in \N} B_{k}$
consisting of sequences vanishing at infinity
$$
\bigoplus_{k\in \N} B_{k}:=
\set{
\braces{b_1, b_2, \ldots} }{
b_{k} \in B_{k},\
\lim_{k \to \infty} \norm{b_{k}} = 0
}.
$$
In other words, $\bigoplus_{k\in \N} B_{k}$ is a closure of the 
sub-space in $\prod_{k\in \N} B_{k}$ consisting of sequences $\braces{ b_1, b_2 , \ldots, 0, 0, \ldots}$
whose entries are zero except for finitely many of them.
\item
Define $\calq_{k\in \N} B_{k}$ as the quotient algebra given by
$$
\calq_{k\in \N} B_{k} := 
\parens{ \prod B_{k} } \big/ \parens{ \bigoplus B_{k} }.
$$
The norm of $\calq B_{k}$ is given by 
$\norm{\braces{b_1, b_2, \ldots}}_{\calq B_{k}}:=
\displaystyle{\limsup_{k\to \infty}} \norm{b_{k}}_{B_{k}} .$
\item
If $\E_{k}$ are Hilbert $B_{k}$-modules, 
one can similarly define $\prod \E_{k}$ as a Hilbert $\prod B_{k}$-module 
consisting of bounded sequences 
$$
\prod_{k\in \N} \E_{k}:=
\set{
\braces{s_{1}^{}, s_{2}^{}, \ldots} }{
s_{k} \in \E_{k},\
\sup_{k} \{ \norm{s_{k}}_{\E_{k}} \} < \infty
}.
$$
The action of $\prod B_{k}$ and $\prod B_{k}$-valued scalar product are defined
as follows;
\begin{eqnarray*}
\{s_{k}\} \cdot \{b_{k}\} &:=& \{ s_{k} \cdot b_{k} \} \in \prod \E_{k}
\qqfor \{s_{k}\} \in \prod \E_{k},\  \{b_{k}\} \in \prod B_{k}, \\
\angles{ \{ s_{k}^{1}\} ,\  \{ s_{k}^{2}\} }_{\prod \E_{k}} 
&:=& \braces{ \angles{s_{k}^{1},\ s_{k}^{2}}_{\E_{k}} }
\in \prod B_{k}
\qqfor \{ s_{k}^{1} \} , \{ s_{k}^{2}\} \in \prod \E_{k}.
\end{eqnarray*}

One can define similraly
$$
\bigoplus_{k\in \N} \E_{k}:=
\set{
\braces{s_{1}^{}, s_{2}^{}, \ldots} }{
s_{k} \in \E_{k},\
\lim_{k \to \infty} \norm{s_{k}}_{\E_{k}} =0
}
$$
as a Hilbert $\prod B_{k}$-module, and define
$$
\calq_{k\in \N} \E_{k}
:= \parens{ \prod \E_{k}} \hattensor_{\pi} \parens{ \calq B_{k} }
=
\parens{ \prod \E_{k}} \big/ \parens{ \bigoplus \E_{k}} 
$$
as a Hilbert $\calq B_{k}$-module, 
where $\map {\pi} {\prod B_{k}} {\calq B_{k}}$
denotes the projection.
\end{itemize}
\end{dfn}
\begin{exa}
If all of $B_{k}$ are $\C$,
then, $\prod \C = \ell^{\infty}(\N)$ and $\bigoplus \C = C_{0}(\N)$.
\end{exa}

Following \cite[Section 3.]{Ha12},
we will construct
``infinite product bundle $\prod E_{k}^{}$''
over $X$ which has a structure of
finite generated projective $\prod B_{k}^{}$-module.

\newcommand{\path}{\mathcal{P}_1}
\begin{dfn}
Let us fix some notations about the holonomy.
\begin{itemize}
\item
Two paths $p_{0}^{}$ and $p_{1}^{}$ from $x$ to $y$ in $X$ are thin homotopic to each other if
there exists an end points preserving homotopy $\map{h} {[0,1]\times [0,1]} {X}$
with $h(\cdot, j) = p_{j}^{}$
that factors through a finite tree $T$,
$$
h \colon [0,1]\times [0,1] \to T \to X
$$
such that both restrictions of the first map
$[0,1]\times \{j \} \to T$ are piecewise-linear for $j = 0,1$.
\item
The path groupoid $\path(X)$ is a groupoid consisting of all the points
in $X$ as objects.
The morphism from $x$ to $y$ are the equivalence class of piece-wise smooth paths
connecting given two points
$$
\path(X)[x,y] :=
\set{\map{p}{[0,1]}{X}}{p(0)=x,\ p(1)=y}/\sim.
$$
The equivalent relationship is given by re-parametrization
and thin homotopy.
\item
If a Hilbert $B$-module $G$-bundle $E$ over $X$ is given,
the transport groupoid $\mathcal{T}(X;E)$ is a groupoid with the same objects as $\path(X)$.
The morphism from $x$ to $y$ are the unitary isomorphisms
between the fibers
$\mathcal{T}(X;E)[x,y] :=
\mathrm{Iso}_{B}^{}\parens{E_x, E_y}$.
\end{itemize}
\end{dfn}
\newcommand{\area}{\mathrm{area}}
\begin{dfn}
A parallel transport of $E$ is a continuous functor
$\Phi^{E} \colon \path(X) \to \mathcal{T}(X;E)$.
$\Phi^{E}$
is called $\ep$-close to the identity
if for each $x\in X$ and
contractible loop $p\in \path(X)[x,x]$,
it follows that
$$\norm{\Phi^{E}_{p} - \mathrm{id}_{E_x}} <\ep \cdot \area(D)$$
for any
two dimensional disk $D\subset X$ spanning $p$.
$D$ may be degenerated partially or completely.
\end{dfn}
\begin{rem}
Let $E$ be a Hermitian vector bundle,
in other words, a finitely generated Hilbert $\C$-module bundle,
equipped with a compatible connection $\nabla$.
Let $\Phi^{E}$ be the parallel transport with respect to $\nabla$
in the usual sense.
 If
its curvature $R_{}^{E} \in C_{}^{\infty}\parens{
X;\  \End(E) \otimes \turnV{2}(T_{}^{*}(X)) }$
has uniformly bounded
operator norm $\norm{R_{}^{E}} < C$,
 then
for any loop $p\in \path(X)[x,x]$
and any two dimensional disk $D\subset X$ spanning $p$,
it follows that
$\norm{\Phi^{E}_{p} - \mathrm{id}_{E_x}} <\int_{D}\norm{R_{}^{E}}
<C \cdot \area(D)$
so it is $C$-closed to identity.
\end{rem}
\begin{prop}
\label{ConstructProductBundle}
Let $\{E^{k}\}$ be a sequence of Hilbert $B_{k}$-module
$G$-bundles over $X$
with $B_{k}$ unital $C^{*}$-algebras.
Assume that each parallel transport
$\Phi^{k}$ for $E^{k}$ is $\ep$-close to the identity
uniformly, that is, $\ep$
is independent of $k$.

Then there exists a finitely generated Hilbert
$\parens{\prod_{k} B_{k}}$-module
$G$-bundle $V$ over $X$
with Lipschitz continuous transition functions in diagonal form
and so that the $k$-th component of this bundle
is isomorphic to the original $E^{k}$.

Moreover,
if the parallel transport
$\Phi^{k}$ for each of $E^{k}$ comes from the
$G$-invariant connection $\nabla^{k}$ on $E^{k}$,
$V$ is equipped with a continuous $G$-invariant
connection induced by $E^{k}$.
\end{prop}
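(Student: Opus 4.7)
My plan is to construct $V$ by a \v{C}ech-style gluing: choose a uniform cover of $X$ coming from $G$-compactness, trivialize each $E^{k}$ via parallel transport inside each cover element, and show that the resulting transition data for the whole family assemble into a single Lipschitz cocycle valued in the diagonal unitaries of $\prod_{k} B_{k}$. The hypothesis that each $\Phi^{k}$ is $\ep$-close to the identity with the \emph{same} $\ep$ is precisely what keeps the infinite-product cocycle Lipschitz with a $k$-independent constant.

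First I would use Corollary~\ref{CorSliceThm} together with the slice theorem (Proposition~\ref{SliceThm}) to choose a locally finite, $G$-invariant open cover $\{U_{\alpha}\}$ of $X$ consisting of sets of the form $G\cdot S_{\alpha}$ with $S_{\alpha}$ a $K_{\alpha}$-slice contained in a geodesically convex ball of some fixed small radius $r_{0}$. By $G$-compactness the cover is finite modulo $G$, and by shrinking $r_{0}$ any contractible loop lying in $U_{\alpha}\cup U_{\beta}$ bounds a disk of $\area$ at most some uniform constant. Pick base points $x_{\alpha}\in S_{\alpha}$, and for each nonempty overlap (modulo $G$) fix a short path $\gm_{\alpha\beta}$ from $x_{\alpha}$ to $x_{\beta}$. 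Trivialize $E^{k}|_{U_{\alpha}}$ via the parallel transport $\Phi^{k}$ starting from $x_{\alpha}$; geodesic convexity together with thin-homotopy invariance makes this well defined. The transition functions are then $g^{k}_{\alpha\beta}(x) := \Phi^{k}_{x_{\beta}\leftarrow x}\circ \Phi^{k}_{x\leftarrow x_{\alpha}}$, a functor from $\path(U_{\alpha}\cap U_{\beta})$ to the transport groupoid.

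The key estimate is the uniform-in-$k$ Lipschitz bound on $g^{k}_{\alpha\beta}$. For $x,y\in U_{\alpha}\cap U_{\beta}$, the composition $g^{k}_{\alpha\beta}(x)\circ g^{k}_{\alpha\beta}(y)^{-1}$, transported back to a single fiber, equals parallel transport of $E^{k}$ around a contractible loop that bounds a two-dimensional disk of area bounded by a uniform multiple of $d(x,y)$. By the $\ep$-closeness hypothesis this differs from the identity by at most $\ep$ times that area, giving $\norm{g^{k}_{\alpha\beta}(x) - g^{k}_{\alpha\beta}(y)}\le L\cdot d(x,y)$ with $L$ independent of $k$. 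The cocycle identity on triple overlaps follows from functoriality of each $\Phi^{k}$. Hence $\mathbf{g}_{\alpha\beta}(x) := (g^{k}_{\alpha\beta}(x))_{k}$ is a Lipschitz unitary-valued function lying in the diagonal sub-algebra of $\mathbb{L}\bigl(\prod_{k} E^{k}_{x_{\alpha}},\, \prod_{k} E^{k}_{x_{\beta}}\bigr)$, and satisfies the cocycle condition.

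Gluing the trivial bundles $U_{\alpha}\times\prod_{k} E^{k}_{x_{\alpha}}$ along $\mathbf{g}_{\alpha\beta}$ produces the required $V$: its $k$-th component recovers $E^{k}$, and the transition functions are in diagonal form by construction. The $G$-equivariant structure transfers because the $\Phi^{k}$ are $G$-invariant and the slices, base points, and connecting paths can be chosen compatibly with the $G$-action. When each $\Phi^{k}$ comes from a $G$-invariant connection $\nabla^{k}$, the local connection one-forms $A^{k}_{\alpha}$ have uniformly bounded operator norm (controlled by $\norm{R^{E^{k}}}$ and $r_{0}$), so they assemble into a bounded one-form $\mathbf{A}_{\alpha}:=(A^{k}_{\alpha})_{k}$ on $U_{\alpha}$, and these patch into a continuous $G$-invariant connection on $V$. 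The main obstacle is keeping every estimate uniform in $k$: a naive choice of trivialization or path data would produce transition and connection norms that grow with $k$, whereupon the sequence would fail to define a bounded element of $\prod_{k} B_{k}$. The uniform $\ep$-closeness together with a geometrically uniform cover coming from $G$-compactness of $X$ is the only ingredient that tames this, and it is the sole place where the almost-flatness of the entire family is essential.
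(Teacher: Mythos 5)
Your proposal follows essentially the same route as the paper: cover $X$ by $G$-translates of finitely many geodesically convex balls obtained from the slice theorem, trivialize each $E^{k}$ by parallel transport to the ball's center, and use the uniform $\ep$-closeness together with an $\area(D)\lesssim d(y,z)$ estimate for disks spanning piecewise-geodesic triangles to get a $k$-independent Lipschitz constant for the transition functions, then glue and carry over the connection by the same area bound. One small imprecision: you describe the cover elements as $G$-invariant tubes $G\cdot S_{\alpha}$, yet then trivialize by parallel transport from a single base point $x_{\alpha}$ using geodesic convexity --- which only makes sense on the individual small balls $\gm(U_{i})$, as in the paper, not on the whole (non-contractible) tube; aside from this slip of terminology your argument coincides with the paper's.
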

\begin{proof}
We will essentially follow the proof of  \cite[Proposition 3.12.]{Ha12}.
For each $x\in X$ take a open ball $U_{x} \subset X$
of radius $\ll 1$ whose center is $x$.
Assume that each $U_{x}$ is geodesically convex.
Due to the corollary \ref{CorSliceThm} of the slice theorem,
there exists a sub-family of
finitely many open subsets $\{U_{x_1}, \ldots U_{x_N} \}$
such that
$X=
\bigcup_{\gm \in G} \bigcup_{i=1}^{N} \gm (U_{x_i})
$.

Fix $k$.
In order to simplify the notation,
let $U_i := U_{x_i}$
and $\map{\Phi_{y; x}}{E^{k}_{y}}{E^{k}_{x}}$ denote
the parallel transport of $E^{k}$
along the minimal geodesic from $y$ to $x$
for $x$ and $y$
in the same neighborhood $\gm(U_{i})$.
Trivialize $E^{k}$ via
$\map{\Phi_{y;x_i}}{E^{k}_{y}}{E^{k}_{x_i}}$
on each $U_{i}$.
Similarly  trivialize $E^{k}$ on each $\gm(U_{i})$ for $\gm \in G$
via $\map{\Phi_{\gm y; \gm x_i}} {E^{k}_{\gm y}} {E^{k}_{\gm x_i}}$.
Note that since parallel transport commute with the action of $G$,
it follows that
$\Phi_{\gm y, \gm x_i} = \gm \circ \Phi_{y; x_i} \circ \gm_{}^{-1}$.

These provide a local trivializations for $E^{k}$ whose
transition functions have uniformly bounded Lipschitz constants.
More precisely
we have to fix unitary isomorphisms
$\map{\phi_{\gm x_i}}{E^{k}_{\gm x_i}}{\mathcal{E}^{k}}$
between the fiber on $\gm x_i$ and
the typical fiber $\mathcal{E}^{k}$.
Our local trivialization is
$
\map{\phi_{\gm x_i} \Phi_{y;\gm x_i}}{E^{k}_{y}}{\mathcal{E}^{k}}
$.
If $y,z\in \gm(U_{i}) \cap \eta(U_{j}) \neq \emptyset $,
we can consider the transition function
$$
y\mapsto
\psi_{\gm(U_{i}),\eta(U_{j})}^{}(y):=
\parens{\phi_{\eta x_j} \circ \Phi_{y;\eta x_j}}
\parens{\phi_{\gm x_i}\circ \Phi_{y;\gm x_i}}^{-1}
\in \End_{B_{k}}(\mathcal{E}^{k}).
$$
Now we will estimate its Lipschitz constant as follows;
\newcommand{\AAAA}{\Phi_{y;\eta x_j}^{}}
\newcommand{\BBBB}{\Phi_{y;\gm x_i}^{-1}}
\newcommand{\CCCC}{\Phi_{z;\eta x_j}^{}\Phi_{y;z}^{}}
\newcommand{\DDDD}{\Phi_{y;z}^{-1}\Phi_{z;\gm x_i}^{-1}}
\begin{eqnarray*}
&&
\psi_{\gm(U_{i}),\eta(U_{j})}^{}(y) - \psi_{\gm(U_{i}),\eta(U_{j})}^{}(z)
\\
&=&
 \parens{\phi_{\eta x_j} \Phi_{y;\eta x_j}}\parens{\phi_{\gm x_i} \Phi_{y;\gm x_i}}^{-1}
-
 \parens{\phi_{\eta x_j} \Phi_{z;\eta x_j}}\parens{\phi_{\gm x_i} \Phi_{z;\gm x_i}}^{-1}
\\
&=&
\phi_{\eta x_j}
\braces{
\parens{\AAAA} \parens{\BBBB}
-
\parens{\CCCC} \parens{\DDDD}
}
\phi_{\gm x_i}^{-1}
\\
&=&
\phi_{\eta x_j}
\braces{
\parens{\AAAA - \CCCC} \parens{\BBBB}
+
\parens{\CCCC} \parens{\BBBB - \DDDD}
}
\phi_{\gm x_i}^{-1}.
\end{eqnarray*}
Since $\phi$'s and $\Phi$'s are isometry, it follows that
\begin{eqnarray}
\norm{\psi_{\gm(U_{i}),\eta(U_{j})}^{}(y) - \psi_{\gm(U_{i}),\eta(U_{j})}^{}(z)}
&\leq&
\norm{\AAAA - \CCCC}
+
\norm{\BBBB - \DDDD}
\nonumber\\
&=&
\norm{\Phi_{y; \eta x_j} \Phi_{z;y}\Phi_{\eta x_j; z} - \id_{E^{k}_{\eta x_j}}}
+
\norm{\Phi_{z; \gm x_i} \Phi_{y;z}\Phi_{\gm x_i; y} - \id_{E^{k}_{\gm x_i}}}
\nonumber\\
&\leq&
\ep \cdot (\area (D_1) + \area (D_2)).
\label{BoundedLipConst}
\end{eqnarray}
Here $D_1 \subset \eta(U_{j})$ is a two dimensional disk spanning
the piece-wise geodesic loops connecting
$\eta x_j$, $y$, $z$, and $\eta x_j$
and $D_2 \subset \gm(U_{i})$ is a two dimensional disk spanning
the piece-wise geodesic loop connecting
$\gm x_i$, $y$, $z$, and $\gm x_i$.

We claim that there exists a constant $C$ depending
only on $X$ such that
\begin{eqnarray}
\label{AreaOfD1D2}
\area (D_1), \: \area(D_2)
\leq
C\cdot \mathrm{dist}(y,z)
\end{eqnarray}
if we choose suitable disks $D_1$ and $D_2$.

We verify this
using the geodesic coordinate
$\exp_{\eta x_j}^{-1} \colon \eta(U_{j}) \to T_{\eta x_j}X$
centered at $\eta x_j \mapsto 0$.
More precisely,
let $p$ denote the minimal geodesic from
$y=p(0)$ to $z=p(\mathrm{dist}(y,z))$ with unit speed.
 Consider
$$
D_0:=\braces{(r \cos \theta, r \sin \theta)\in \R^{2} \;\big|\;
0\leq r \>,\; 0\leq \theta \leq \mathrm{dist}(y,z) }
\subset \R^{2}
$$
and $\map{F}{D_0}{\eta(U_j) \subset X}$ given by
\begin{eqnarray*}
F(r \cos \theta, r \sin \theta)
:=
\exp_{\:\eta x_j}\parens{r \exp_{\eta x_j}^{-1}(p(\theta))}.
\end{eqnarray*}
Set $D_1 := F(D_0)$.
$F$ is injective if $\exp_{\eta x_j}^{-1}(y)$ and $\pm\exp_{\eta x_j}^{-1}(z)$
are on different radial directions, in which case
$F$ is a homeomorphism onto its image, and hence
$F(D_0)$ is a two dimensional disk spanning the target loop.
The Lipschitz constant of $F$ is bounded by a constant
depending on the curvature on $\eta(U_{j})$,
so there exists a constant $C_{\eta ,j}$ depending on the
Riemannian curvature on $\eta(U_j)$ satisfying
$$
\area(D_1) \leq C_{\eta ,j} \cdot \area(D_0)
\leq C_{\eta ,j} \cdot \mathrm{dist}(y,z).
$$
However, the constant $C_{\eta ,j}$ can be taken independent of
$\eta(U_j)$ due to the bounded geometry of $X$ implied by the slice theorem (Corollary
\ref{CorSliceThm}).
In the case of
$\exp_{\eta x_j}^{-1}(y)$ and $\pm\exp_{\eta x_j}^{-1}(z)$
are on the same radial direction, $D_1$
is completely degenerated and $\area(D_1) = 0$.
We can construct $D_2$ in the same manner
so the claim (\ref{AreaOfD1D2}) has been verified.

Therefore combining (\ref{BoundedLipConst})
and (\ref{AreaOfD1D2}), we conclude that
the Lipschitz constants of the transition functions
of these local trivialization
are less than
$2C\ep$,
which are independent of $E^{k}$, $U_{i}$ and $\gm \in G$,
in particular, the product of them
$$
\Psi_{\gm(U_{i}),\eta(U_{j})} :=
\braces{\psi^{k}_{\gm(U_{i}),\eta(U_{j})}}_{k\in \N}
\colon
\gm(U_{i}) \cap \eta(U_{j})
\to
\mathbb{L}_{\parens{\prod B_{k}}}\parens{\prod_{k} \mathcal{E}^{k}}
$$
are Lipschitz continuous.
So it is allowed to use them to define the
Hilbert $\prod_k B_{k}$-module bundle $V$ as required.
 Precisely
$V$ can be constructed as follows;
$$
V := \bigsqcup_{\gm ,i} \parens{ \gm(U_{i}) \times
\prod_{k} \mathcal{E}^{k}}
\Big/ \sim.
$$
Here, $(x,v) \in \gm(U_{i}) \times
\prod_{k} \mathcal{E}^{k}$ and
$(y,w) \in \eta(U_{j}) \times
\prod_{k} \mathcal{E}^{k}$
are equivalent if and only if
$x=y \in \gm(U_{i}) \cap \eta(U_{j}) \neq \emptyset$ and
$\Psi_{\gm(U_{i}),\eta(U_{j})}(v)=w$.
By the construction of $V$,
if $\map{p_{n}^{}}{\prod_k B_{k}}{B_{n}}$ denotes the projection
onto the $n$-th component,
$V \hattensor_{p_{n}^{}}^{} B_{n}$ is isomorphic to the original
$n$-th component $E^{n}$.

In order to verify the continuity of the induced connection,
let $\{ \mathbf{e}_{i}^{} \}$ be any orthonormal local frame on
$U_{i}$ for an arbitrarily fixed $E^{k}$
obtained by the parallel transport along the minimal
geodesic from the center $x_i \in U_{i}$,
namely,
$\mathbf{e}_{i}^{}(y) = \Phi_{x_i ; y}\mathbf{e}_{i}^{}(x_i)$.
It is sufficient to verify that
$
\norm{\nabla^{k}\mathbf{e}_{i}^{}} < C
$.
Let $ v \in T_{y}X $ be a unit tangent vector and
$p(t) := \exp_{y}(tv)$ be the geodesic of unit speed
with direction $v$.
\begin{eqnarray*}
\nabla^{k}_{v}\mathbf{e}_{i}^{}(y)
&=&
\lim_{t\to 0}\frac{1}{t}\parens{
\Phi_{p(t);p(0)}\mathbf{e}_{i}^{}(p(t)) - \mathbf{e}_{i}^{}(p(0))
}
\\
&=&
\lim_{t\to 0}\frac{1}{t}\parens{
\Phi_{p(t);p(0)}\Phi_{x_i ; p(t)}
 - \Phi_{x_i ; p(0)}
}
\mathbf{e}_{i}^{}(x_i),
\\
\norm{\nabla^{k}_{v}\mathbf{e}_{i}^{}(y)}
&\leq&
\lim_{t\to 0}\frac{1}{|t|}\norm{
\Phi_{p(t);p(0)}\Phi_{x_i ; p(t)}
 - \Phi_{x_i ; p(0)}
}
\\
&\leq&
\lim_{t\to 0}\frac{1}{|t|}
\ep\cdot \area(D(t)),
\end{eqnarray*}
where $D(t)$ is a $2$-dimensional disk in $U_{i}$
spanning the piece-wise geodesic
connecting $x_{i}$, $p(0)=y$, $p(t)$ and $x_{i}$.
As above, we can find a constant $C>0$ and
disks $D(t)$ satisfying
$$
\area(D(t)) \leq C \cdot \mathrm{dist} (p(0), p(t))
= C |t|
$$
 for $|t| \ll 1$.
Hence, we obtain
$
\norm{\nabla^{k}_{v}\mathbf{e}_{i}^{}(y)}
\leq
C\ep.
$
 \end{proof}

\begin{dfn}
\label{ConstructW}
Let us define a Hilbert $\parens{\calq B_{k}^{} }$-module bundle
$$
W:= V\hattensor_{\pi}^{} \parens{\calq B_{k}^{} },
$$
where $\pi \colon \prod B_{k}^{} \twoheadrightarrow \calq B_{k}^{}$
denotes the projection.
\end{dfn}
The family of parallel transport
of $E^{k}$ induces the parallel transport
$\Phi^{W}$ of
$W$ which commutes with the action of $G$.
\begin{prop}
\label{FlatnessOfW}
If the parallel transport of $E^{k}$ is $C_k$-close
to the identity with $C_k \searrow 0$,
 then the $G$-bundle
$W$ constructed above is a flat bundle.
More precisely the parallel transport
$\Phi^{W}(p) \in \Hom(W_x, W_y)$ depends only on
the ends-fixing homotopy class of $p \in \path(X)[x,y]$.
\end{prop}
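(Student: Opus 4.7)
The plan is to reduce the desired flatness of $W$ to the triviality of parallel transport on contractible loops, and then to deduce that triviality from the hypothesis $C_k \searrow 0$ together with the construction of $W$ as a quotient by $\bigoplus_k$.

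First I would check that, in any local trivialization coming from the proof of Proposition \ref{ConstructProductBundle}, the parallel transport of $V$ along a piecewise smooth path $p \in \path(X)[x,y]$ is diagonal with $k$-th component $\Phi^{E^k}_p$. Indeed, the continuous $G$-invariant connection on $V$ was built componentwise from the $\nabla^k$, so the parallel transport ODE uncouples and $\Phi^V_p = \{\Phi^{E^k}_p\}_k$ as an element of $\mathbb{L}(V_x, V_y)$. The parallel transport on $W = V \hattensor_\pi (\calq B_k)$ is then obtained as the image of this family under the canonical quotient $\pi \colon \prod B_k \twoheadrightarrow \calq B_k$.

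Next, I would observe that flatness of $W$ is equivalent to $\Phi^W_p = \id_{W_x}$ for every contractible loop $p \in \path(X)[x,x]$: if $p_1, p_2 \in \path(X)[x,y]$ are ends-fixed homotopic then $p_1 \cdot p_2^{-1}$ is such a loop, and functoriality of $\Phi^W$ reduces $\Phi^W_{p_1} = \Phi^W_{p_2}$ to $\Phi^W_{p_1 \cdot p_2^{-1}} = \id_{W_x}$. Given a contractible loop $p$, pick any two-dimensional disk $D \subset X$ spanning $p$ (possibly partially or completely degenerate). The hypothesis of the proposition then yields
\begin{align*}
\norm{\Phi^{E^k}_p - \id_{E^k_x}} \leq C_k \cdot \area(D) \qqfor k \in \N.
\end{align*}
Since $C_k \searrow 0$ and $\area(D)$ is a fixed finite constant, the sequence $\{\Phi^{E^k}_p - \id_{E^k_x}\}_k$ belongs to $\bigoplus_k \mathbb{L}(E^k_x)$; passing to the quotient gives $\Phi^W_p = \id_{W_x}$ in $\mathbb{L}(W_x)$, as required.

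The main obstacle is the first step: verifying cleanly that the parallel transport on $V$ decomposes componentwise in spite of the fact that the transition functions of $V$ are only Lipschitz continuous. The most convenient way around this is to first treat piecewise geodesic paths that stay inside one of the geodesically convex opens $\gm(U_i)$ from the construction, where the decomposition is manifest from the definition of the connection on $V$, and then to extend to arbitrary piecewise smooth paths by functoriality of parallel transport under composition of paths.
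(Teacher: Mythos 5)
Your proof is correct and follows essentially the same route as the paper: reduce flatness to $\Phi^W_p = \id$ on contractible loops, fix a spanning disk $D$, apply the $C_k$-closeness bound, and use $C_k \searrow 0$ to kill the $\limsup$ in the quotient $\calq_k B_k$. The only cosmetic difference is that you phrase the conclusion as membership of $\{\Phi^{E^k}_p - \id\}_k$ in $\bigoplus_k \mathbb{L}(E^k_x)$ while the paper runs an explicit $\ep$--$n_0$ estimate, and you spell out the componentwise decomposition $\Phi^V_p = \{\Phi^{E^k}_p\}_k$ that the paper uses tacitly.
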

\begin{proof}
It is sufficient to prove that for any contractive loop
$p\in \path(X)[x,x]$,
it satisfies
$\Phi^{W}(p) = \mathrm{id}_{W_x}$.
Fix a two dimensional disk $D\subset X$ spanning the loop $p$.
For arbitrary $\ep >0$ there exists $n_0$ such that
every $k \geq n_0$ satisfies that
$\Phi^{E^{k}}$ is $\frac{\ep}{1+\area(D)}$-close to the identity.
\begin{eqnarray*}
\norm{\Phi^{W}(p) - \mathrm{id}_{W_x}^{}}
=
\limsup_{k\to \infty} \norm{\Phi^{E^{k}}(p) - \mathrm{id}}
\leq
\sup_{k\geq n_0} \norm{\Phi^{E^{k}}(p) - \mathrm{id}}
\leq
\frac{\ep}{1+\area(D)}\cdot\area(D)
\leq
\ep
\end{eqnarray*}
This implies $\Phi^{W}(p) = \mathrm{id}_{W_x}^{}$.
 \end{proof}
\subsection{Index of the product bundle}
\begin{prop}
\label{PiMu}
\text{}
\begin{enumerate}
\item
Let $\map{p_{n}^{}}{\prod B_{k}^{}}{B_{n}^{}}$ denote the projection
onto the $n$-th component and consider
$$\map{(1 \otimes p_{n}^{})_{*}^{}}
{K_{0} \parens{C^*(G) \hattensor \parens{\prod  B_{k}^{}}}}
{K_{0} \parens{C^*(G)\hattensor B_{n}^{}}}.
$$
 Then
$$
(1 \otimes p_{n}^{})_{*}^{}
\mathrm{ind}_{G}^{} \parens{ \brackets{\prod E_{}^{k}} \hattensor [D]}
=
\mathrm{ind}_{G}^{} \parens{ \brackets{E_{}^{n}} \hattensor [D]}.
$$
\item
Let $\map{\pi}{\prod B_{k}^{}}{\calq B_{k}^{}}$ denote the quotient map
and consider
$$\map{(1 \otimes \pi)_{*}^{}}
{K_{0} \parens{C^*(G) \hattensor \parens{\prod B_{k}^{}}}}
{K_{0} \parens{C^*(G)\hattensor \parens{\calq B_{k}^{}}}}.
$$
 Then
$$
(1 \otimes \pi)_{*}^{}
\mathrm{ind}_{G}^{} \parens{ \brackets{\prod E_{}^{k}} \hattensor [D]}
=
\mathrm{ind}_{G}^{} ([W] \hattensor [D]).
$$
\end{enumerate}
\end{prop}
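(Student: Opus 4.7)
The plan is to deduce both parts from naturality of the entire construction defining $\mathrm{ind}_{G}$ under a $*$-homomorphism $\phi \colon \prod B_{k} \to B'$ of the coefficient algebra; here $\phi = p_{n}$ with $B' = B_{n}$ in part (1), and $\phi = \pi$ with $B' = \calq B_{k}$ in part (2). In both cases, the push-forward on the final $K$-theory group is the map $(1 \otimes \phi)_{*}$ appearing in the statement, so it is enough to prove the intertwining identity
$$
(1 \otimes \phi)_{*} \mathrm{ind}_{G}\parens{\brackets{\prod E^{k}} \hattensor [D]} = \mathrm{ind}_{G}\parens{(1 \otimes \phi)_{*}\brackets{\prod E^{k}} \hattensor [D]}
$$
together with the identifications $(1 \otimes p_{n})_{*}\brackets{\prod E^{k}} = [E^{n}]$ and $(1 \otimes \pi)_{*}\brackets{\prod E^{k}} = [W]$.

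First I would identify the bundle-level push-forward. The class $\brackets{\prod E^{k}}$ is represented by the Kasparov cycle $(C_{0}(X;V), 0)$, and pushing forward along $1 \otimes \phi$ yields the cycle $\parens{C_{0}(X;V) \hattensor_{1 \otimes \phi}(C_{0}(X) \hattensor B'), 0}$. Using the local trivializations provided by Proposition \ref{ConstructProductBundle}, whose transition functions are diagonal on $\prod \mathcal{E}^{k}$, I would verify that this Hilbert $(C_{0}(X) \hattensor B')$-module is canonically isomorphic to $C_{0}(X;V \hattensor_{\phi} B')$ as a $G$-equivariant left $C_{0}(X)$-module. For $\phi = p_{n}$, Proposition \ref{ConstructProductBundle} already tells us that $V \hattensor_{p_{n}} B_{n}$ coincides with the original $E^{n}$, giving $(1 \otimes p_{n})_{*}\brackets{\prod E^{k}} = [E^{n}]$. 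For $\phi = \pi$, Definition \ref{ConstructW} simply defines $W = V \hattensor_{\pi} \calq B_{k}$, hence $(1 \otimes \pi)_{*}\brackets{\prod E^{k}} = [W]$.

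Next I would chain three standard naturality statements about the remaining ingredients of $\mathrm{ind}_{G}$. The Kasparov product over $C_{0}(X)$ is functorial in the second coefficient algebra, so $(1 \otimes \phi)_{*}\parens{x \hattensor_{C_{0}(X)} [D]} = \parens{(1 \otimes \phi)_{*} x} \hattensor_{C_{0}(X)} [D]$. The descent functor $j^{G}$ is natural in the target $G$-algebra — with $G$ acting trivially on every $B_{k}$, on $B'$, and on the associated tensor products — so $j^{G}$ intertwines the corresponding push-forwards. Finally, the cutoff class $[c] \in K_{0}\parens{C^{*}(G;C_{0}(X))}$ from Lemma \ref{CutoffKHomologyClass} is built from data on $X$ alone, so taking the Kasparov product against $[c]$ commutes with $(1 \otimes \phi)_{*}$. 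Assembling these three naturality statements produces the displayed identity above, and combining with the bundle-level identification proves both parts.

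The only non-formal step is the identification of Hilbert modules $C_{0}(X;V) \hattensor_{1 \otimes \phi}\parens{C_{0}(X) \hattensor B'} \cong C_{0}(X;V \hattensor_{\phi} B')$, compatibly with the $G$-action and with the left $C_{0}(X)$-action. This is where the diagonal form of the transition functions of $V$ from Proposition \ref{ConstructProductBundle} becomes essential: on each trivializing chart the tensor product reduces to applying $\phi$ entry-wise on the typical fiber $\prod \mathcal{E}^{k}$, and diagonality of the transition matrices guarantees that the charts glue to give a single global isomorphism. Granting this, the remainder of the argument is purely formal bookkeeping with the naturality of $j^{G}$ and of the Kasparov product.
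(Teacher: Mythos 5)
Your proposal is correct and follows essentially the same route as the paper: both begin by identifying $(1\otimes p_n)_*\brackets{\prod E^k}=[E^n]$ and $(1\otimes\pi)_*\brackets{\prod E^k}=[W]$ at the $K\!K^G$ level (which the paper simply asserts "by construction" and you unpack via the diagonal transition functions of $V$), and both then push the $*$-homomorphism through $j^G$, the Kasparov product, and the pairing with $[c]$ — the paper encodes these three naturality statements as a chain of Kasparov-product identities against the class $\mathbf{p_n}=(B_n,p_n,0)$ and its descent $j^G(\mathbf{p_n})$, while you phrase them as functoriality of the product, naturality of $j^G$, and commutation with $[c]\hattensor\,$; these are the same facts.
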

\begin{proof}
As for the first part,
$\brackets{E_{}^{n}} = (p_{n}^{})_* \brackets{\prod E_{}^{k}}
\in K\!K^{G}(C_{0}^{}(X),\ C_{0}^{}(X)\hattensor B_{n}^{})$ by the construction of
$\prod E_{}^{k}$.
Then it follows that
\begin{eqnarray*}
\mathrm{ind}_{G}^{} \parens{ \brackets{E_{}^{n}} \hattensor [D]}
&=&
\mathrm{ind}_{G}^{} \parens{ (p_{n}^{})_* \brackets{\prod E_{}^{k}} \hattensor [D]}
\\
&=&
[c]\hattensor j^{G}
 \parens{\brackets{\prod E_{}^{k}} \hattensor [D] \hattensor{\mathbf{p_{n}^{}}}}
\\
&=&
[c]\hattensor j^{G} \parens{\brackets{\prod E_{}^{k}} \hattensor [D]}
\hattensor j^{G}({\mathbf{p_{n}^{}}})
\\
&=&
\mathrm{ind}_{G}^{} \parens{\brackets{\prod E_{}^{k}} \hattensor [D]}
\hattensor j^{G}({\mathbf{p_{n}^{}}})
\\
&=&
(1 \otimes p_{n}^{})_* \mathrm{ind}_{G}^{}
\parens{ \brackets{\prod E_{}^{k}} \hattensor [D]},
\end{eqnarray*}
where $\mathbf{p_{n}^{}} = (B_{n}^{}, p_{n}^{}, 0) \in K\!K \parens{ \prod_{k\in\N} B_{k}^{},\ B_{n}^{} }$.
Then note that
$
j^{G}(\mathbf{p_{n}^{}})
=
(C^{*}(G)\hattensor B_{n}^{}, 1 \hattensor p_{n}^{}, 0)
\in K\!K\parens{C^{*}(G) \hattensor \parens{ \prod_{k\in\N} B_{k}^{}},\ C^{*}(G) \hattensor B_{n}^{}}
$.
 Since
$\pi_{*}\brackets{\prod E_{}^{k}}
= [W] \in K\!K^{G}\parens{ C_{0}^{}(X), C_{0}^{}(X) \hattensor \parens{ \calq B_{k}^{}} }$
by the construction of $W$,
the second part can be proved in the similar way.
 \end{proof}
\begin{prop}\label{MuW0}
Let $[D]$ be a $K$-homology element in $K\!K^{G}(C_{0}^{}(X), \C)$
determined by a Dirac operator on a $G$-Hermitian vector bundle $\bb{V}$ over $X$.
Suppose that $W$ is a finitely generated flat $B$-module $G$-bundle.
Assume that $X$ is simply connected.

 Then
$\mathrm{ind}_{G}^{}\parens{[W]\hattensor [D]}=0
\in K_{0} \parens{ C^{*}(G) \hattensor B}$
if $\mathrm{ind}_{G}^{}([D]) = 0$.
\end{prop}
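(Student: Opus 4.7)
The plan is to use flatness of $W$ and simple connectivity of $X$ to globally trivialize $W$; this identifies $[W]\hattensor[D]$ with the external Kasparov product of $[D]$ with the class of a genuine $G$-representation on a single Hilbert $B$-module, after which the vanishing follows from multiplicativity of $j^{G}$ together with $\mathrm{ind}_{G}^{}([D]) = 0$.

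First I would fix a base point $x_{0}^{} \in X$ and set $W_{0}^{} := W_{x_{0}^{}}^{}$, a finitely generated projective Hilbert $B$-module. Since $W$ is flat and $X$ is simply connected, parallel transport $\Phi_{x,y}^{} \colon W_{x}^{} \to W_{y}^{}$ depends only on the endpoints, so the assignment $s(x) \mapsto \Phi_{x,x_{0}^{}}^{}(s(x))$ yields an isomorphism of Hilbert $C_{0}^{}(X)\hattensor B$-modules $C_{0}^{}(X;W) \cong C_{0}^{}(X) \hattensor W_{0}^{}$. Because the connection is $G$-invariant, one has $\Phi_{\gm y,\gm z}^{}\,\gm = \gm\,\Phi_{y,z}^{}$. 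A direct computation then shows that in this trivialization the $G$-action becomes
\[
(\gm \tilde{s})(x) = \rho(\gm)\, \tilde{s}(\gm_{}^{-1}x), \qquad \rho(\gm) := \Phi_{\gm x_{0}^{},\,x_{0}^{}}^{} \circ \gm|_{W_{0}^{}},
\]
i.e.\ the $G$-action is twisted by a strongly continuous unitary representation $\rho \colon G \to U_{B}^{}(W_{0}^{})$ on a single fiber (path-independence of $\Phi$ forces the cocycle identity $\rho(\gm_{1}^{}\gm_{2}^{}) = \rho(\gm_{1}^{})\rho(\gm_{2}^{})$).

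Second, the pair $(W_{0}^{},\rho)$ defines a Kasparov class $[W_{0}^{}] \in K{\!}K^{G}(\C,B)$ with zero operator. In the trivialization $[W]$ is the external product of the identity class on $C_{0}^{}(X)$ with $[W_{0}^{}]$, so by associativity of the Kasparov product,
\[
[W] \hattensor [D] \;=\; [D] \hattensor_{\C}^{} [W_{0}^{}] \quad \in\; K{\!}K^{G}(C_{0}^{}(X), B).
\]
Applying Kasparov descent $j^{G}$, which is multiplicative on external products over $\C$, and pairing with the cutoff class $[c]$ as in Definition \ref{BCmap}, yields
\[
\mathrm{ind}_{G}^{}\parens{[W] \hattensor [D]} \;=\; \mathrm{ind}_{G}^{}([D]) \hattensor_{C^{*}(G)}^{} j^{G}([W_{0}^{}]) \quad \in\; K_{0}^{}(C^{*}(G) \hattensor B).
\]
Since $\mathrm{ind}_{G}^{}([D]) = 0$ by hypothesis, the right-hand side vanishes.

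The main obstacle is the first step: setting up the global trivialization so that, after transporting the $G$-action along $\Phi$, one obtains a representation of $G$ on the single fiber $W_{0}^{}$ rather than an $x$-dependent family of unitaries. This uses both hypotheses essentially: path-independence of parallel transport requires simple connectivity of $X$ together with flatness of $W$ (the latter provided by Proposition \ref{FlatnessOfW}), while the commutation of $\gm$ with $\Phi$ requires $G$-invariance of the connection on $W$. Once this structural identification is in place, the remainder is formal Kasparov calculus.
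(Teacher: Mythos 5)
Your proposal is correct and follows essentially the same route as the paper: the paper likewise packages the fiber $W_{x_0}$ with the holonomy representation $\rho(\gm) = \Phi_{\gm x_0, x_0}\circ\gm$ into a class $[W]_{\mathrm{rpn}} \in K{\!}K^{G}(\C, B)$, proves $[W]\hattensor_{C_0(X)}[D] = [D]\hattensor_{\C}[W]_{\mathrm{rpn}}$ (Lemma \ref{LemIndTwistedByW}, via an explicit $G$-equivariant unitary built from the flat global sections $\overline{w}(x)=\Phi_{x_0;x}w$ rather than your appeal to associativity of the Kasparov product), and then concludes exactly as you do from multiplicativity of $j^{G}$.
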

In order to prove this,
we introduce an element $[W]_{\mathrm{rpn}}\in K\!K^{G}(\C,B)$
using the holonomy representation.
\begin{dfn}\text{}
\begin{itemize}
\item
Let $\Phi_{x;y}$ denote the parallel
transport of $W$ along an arbitrary path
from $x\in X$ to $y\in X$.
Since $X$ is simply connected and $W$ is flat,
it depends only on the ends of the path.
\item
Let us fix a base point $x_0 \in X$
and $W_{x_0}$ be the fiber on $x_0$.
Define $[W]_{\mathrm{rpn}}$ as
$$[W]_{\mathrm{rpn}} := (W_{x_0}, 0) \in K\!K^{G}(\C,B)$$
The action of $G$ on $W_{x_0}$ is given by the
holonomy $\map{\rho}{G}{\End_{Q}(W_{x_0})}$
$$
\rho [\gm](w) = (\Phi_{x_0;\gm x_0})^{-1} \gm(w)
\qqfor \gm \in G, \: w \in W_{x_0}
$$
\end{itemize}
\end{dfn}
\begin{lem}
\label{LemIndTwistedByW}
$$
[W]\hattensor_{C_{0}^{}(X)}^{} [D] = [D]
\hattensor_{\C}^{} [W]_{\mathrm{rpn}}
\in K\!K^{G}(C_{0}^{}(X), B).
$$
\end{lem}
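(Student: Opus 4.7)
The strategy is to trivialize $W$ globally via parallel transport and then recognize the product as an external product. Since $X$ is simply connected and $W$ is flat by Proposition \ref{FlatnessOfW}, for any two points $x,y\in X$ the parallel transport $\Phi_{x;y}^{} \colon W_{x}^{} \to W_{y}^{}$ depends only on the endpoints, is a unitary $B$-module isomorphism (the connection is Hermitian), and satisfies the cocycle identity $\Phi_{y;z}^{}\Phi_{x;y}^{} = \Phi_{x;z}^{}$. Moreover, because the given connection on $W$ is $G$-invariant, parallel transport intertwines the $G$-action: $\gm \circ \Phi_{x;y}^{} = \Phi_{\gm x;\gm y}^{} \circ \gm$ for all $\gm\in G$.

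Fix a basepoint $x_{0}^{}\in X$ and define
\begin{equation*}
\map{\Psi}{C_{0}^{}(X;W)}{C_{0}^{}(X)\hattensor W_{x_{0}^{}}^{}},
\qquad
\Psi(s)(x) := \Phi_{x;x_{0}^{}}^{}\parens{s(x)}.
\end{equation*}
The first step is to verify that $\Psi$ is an isomorphism of $\Z/2\Z$-graded Hilbert $C_{0}^{}(X)\hattensor B$-modules that is $C_{0}^{}(X)$-linear on the left: $C_{0}^{}(X)$-linearity is immediate from pointwise definition, and unitarity of $\Phi_{x;x_{0}^{}}^{}$ implies $\angles{\Psi(s_{1}^{}),\Psi(s_{2}^{})}(x) = \angles{s_{1}^{}(x),s_{2}^{}(x)}_{W_{x}^{}}^{}$. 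Equip $W_{x_{0}^{}}^{}$ with the holonomy $G$-representation $\rho[\gm](w) = \Phi_{\gm x_{0}^{};x_{0}^{}}^{}(\gm w)$ (well-defined as a group action by the cocycle identity and flatness), and give $C_{0}^{}(X)\hattensor W_{x_{0}^{}}^{}$ the diagonal $G$-action. The key computation is the $G$-equivariance of $\Psi$: for $s\in C_{0}^{}(X;W)$ and $\gm\in G$,
\begin{align*}
\Psi(\gm[s])(x)
&= \Phi_{x;x_{0}^{}}^{}\parens{\gm\cdot s(\gminv x)}
= \Phi_{x;x_{0}^{}}^{}\Phi_{\gm x_{0}^{};x}^{}\gm\parens{\Phi_{x_{0}^{};\gminv x}^{}\gminv\cdot\Phi_{x;x_{0}^{}}^{}s(\gminv x)}
\\
&= \Phi_{\gm x_{0}^{};x_{0}^{}}^{}\gm\parens{\Psi(s)(\gminv x)}
= \rho[\gm]\parens{\Psi(s)(\gminv x)},
\end{align*}
which exhibits $\Psi$ as intertwining the two $G$-actions.

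Consequently, $[W] = [C_{0}^{}(X;W),0]$ is represented by the same cycle as $[C_{0}^{}(X)\hattensor W_{x_{0}^{}}^{},0]$, so in $K\!K^{G}(C_{0}^{}(X),\, C_{0}^{}(X)\hattensor B)$ we have the external product identity
\begin{equation*}
[W] \;=\; 1_{C_{0}^{}(X)}^{} \hattensor_{\C}^{} [W]_{\mathrm{rpn}}^{},
\end{equation*}
where $1_{C_{0}^{}(X)}^{}$ denotes the unit of $K\!K^{G}(C_{0}^{}(X),C_{0}^{}(X))$.

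Finally, associativity and commutativity of the external Kasparov product with the internal product over $C_{0}^{}(X)$ give
\begin{equation*}
[W]\hattensor_{C_{0}^{}(X)}^{}[D]
= \parens{1_{C_{0}^{}(X)}^{}\hattensor_{\C}^{}[W]_{\mathrm{rpn}}^{}}\hattensor_{C_{0}^{}(X)}^{}[D]
= [D]\hattensor_{\C}^{}[W]_{\mathrm{rpn}}^{}.
\end{equation*}
The main obstacle is step two: establishing $G$-equivariance of $\Psi$, which crucially depends both on flatness of $W$ (so that parallel transport is path-independent and the cocycle identity holds globally) and on $G$-invariance of the connection (so that $\gm$ and $\Phi$ commute in the prescribed manner). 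The remaining ingredients — unitarity, $C_{0}^{}(X)$-linearity, and the associativity of Kasparov products — are formal.
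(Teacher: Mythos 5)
Your proposal is correct and rests on the same essential idea as the paper's proof -- trivializing $W$ globally by parallel transport -- but it organizes the argument differently. The paper constructs an explicit unitary $\varphi \colon L^{2}(X;\mathbb{V}) \hattensor_{\C} W_{x_{0}^{}} \to C_{0}^{}(X;W) \hattensor_{C_{0}^{}(X)} L^{2}(X;\mathbb{V})$ directly between the cycles representing the two Kasparov products, defined on a dense subspace via $\varphi(s\otimes w) = \overline{w}\cdot\chi \otimes s$ with a cutoff $\chi$ (needed because $\overline{w}(x) = \Phi_{x_{0}^{};x}^{}w$ is bounded but not $C_{0}$), and then verifies compatibility with the operators by checking $D^{W}\circ\varphi = \varphi\circ(D\hattensor 1)$ using $\nabla^{W}\overline{w}=0$. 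You instead establish the clean module isomorphism $\Psi\colon C_{0}^{}(X;W)\to C_{0}^{}(X)\hattensor W_{x_{0}^{}}$ upstream -- at the level of the zero-operator cycle $[W]$ -- which avoids the cutoff issue entirely (since $\Phi$ is isometric and $s$ vanishes at infinity), and then invokes the interchange law for internal and external Kasparov products, $(\alpha\hattensor_{B_{1}}\beta)\hattensor_{\C}(\gamma\hattensor_{B_{2}}\delta) = (\alpha\hattensor_{\C}\gamma)\hattensor_{B_{1}\otimes B_{2}}(\beta\hattensor_{\C}\delta)$, with $\alpha = 1_{C_{0}^{}(X)}$, $\beta=[D]$, $\gamma=[W]_{\mathrm{rpn}}$, $\delta=1_{B}$. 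This buys you a proof that never touches the twisted Dirac operator or the connection condition in Kasparov's product, at the cost of leaning on the formal machinery of $KK$-theory. Both proofs ultimately use the same isomorphism: your $\Psi^{-1}\otimes 1$ (after the natural identification $(C_{0}^{}(X)\hattensor W_{x_{0}^{}})\hattensor_{C_{0}^{}(X)} L^{2}(X;\mathbb{V}) \cong L^{2}(X;\mathbb{V})\hattensor W_{x_{0}^{}}$) is exactly the paper's $\varphi$. One small typographical slip: in the displayed $G$-equivariance chain you insert $\Phi_{x_{0}^{};\gminv x}^{}\gminv\cdot\Phi_{x;x_{0}^{}}^{}s(\gminv x)$, but the last piece should read $\Phi_{\gminv x;x_{0}^{}}^{}s(\gminv x) = \Psi(s)(\gminv x)$ for the types to match; the final identity is nonetheless correct, as the verification using $\gm\Phi_{\gminv x;x_{0}^{}}^{}\gminv = \Phi_{x;\gm x_{0}^{}}^{}$ and the cocycle relation $\Phi_{\gm x_{0}^{};x_{0}^{}}^{}\Phi_{x;\gm x_{0}^{}}^{} = \Phi_{x;x_{0}^{}}^{}$ shows.
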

\begin{proof}
Recall that $[D]\in K\!K^{G}(C_{0}^{}(X), \C)$ is given by
$\left( L^2(X;\mathbb{V} ), F_{D}^{} \right) $,
where $F_{D}^{}$ denotes the operator $\frac{D}{\sqrt{1+D_{}^{2}}}$,
and that
$$
[W]\hattensor_{C_{0}^{}(X)}^{} [D] = \parens{
C_{0}^{}(X;W) \hattensor_{C_{0}^{}(X)}^{} L^2 (X;\mathbb{V})
,\ F_{D_{}^{W}}^{}
},
$$
where $D_{}^{W}$ is the Dirac operator twisted by $W$
acting on $L^{2}(X; W {\otimes} \bb{V})
\simeq C_{0}^{}(X;W) \hattensor_{C_{0}^{}(X)}^{}L^{2}(X;\bb{V})$,
that is,
$$
D_{}^{W} = 
\sum_{j} \parens{ \mathrm{id}_{W}^{} \otimes c(e_j) }
\parens{\nabla_{e_j}^{W}\otimes \mathrm{id}_{\mathbb{V}}^{} +
\mathrm{id}_{W}^{}\otimes \nabla_{e_j}^{\mathbb{V}}
},
$$
where $\{ e_j \}$ denotes a orthogonal basis for $TX$
and $c(\cdot)$ denotes the Clifford multiplication by $\mathrm{Cliff}(TX)$ on $\mathbb{V}$.
The action of $C_{0}^{} (X)$ on $C_{0}^{}(X; W)$ and
$L^2 (X; \mathbb{V})$ are
the point-wise multiplications.
 On the other hand, 
$$
[D] \hattensor_{\C}^{}[W]_{\mathrm{rpn}}
=
\left( L^2 (X;\mathbb{V}) \hattensor_{\C}^{}
W_{x_0},\ F_{D}^{} \hattensor 1 \right).
$$
The action of $C_{0}^{} (X)$ is the point-wise multiplications.
Note that the action of $G$
on $W_{x_0}$ is given by the holonomy representation $\rho$.
It is sufficient to give a $G$-equivariant isomorphism
$$
\map{\varphi}
{L^2 (X;\mathbb{V}) \hattensor_{\C}^{} W_{x_0}}
{C_{0}^{}(X;W) \hattensor_{C_{0}^{}(X)}^{} L^2 (X;\mathbb{V})},
$$
which is compatible
with $D_{W}$ and $D \hattensor 1$.
 Set a section for $W$ given by
\begin{align}
\overline{w} \colon x \mapsto \Phi_{x_0 ; x}^{}w \in W_x
\label{EqGlobalSectionOnW}
\end{align}
and define
$\varphi$ on a dense sub space $C_{c}^{}(X;\bb{V}) \hattensor W_{x_0}$ as
$$
\varphi (s\otimes w) := \overline{w} {\cdot} \chi \otimes s
\qquad \text{
for $s\in C_{c}^{} (X;\mathbb{V})$ and $w \in W_{x_0}$},
$$
where $\chi \in C_{0}^{}(X)$ is an
arbitrary compactly supported function on $X$
with values in $[0,1]$
satisfying that $\chi(x)=1$ for all $x\in \supp(s)$.

$\varphi$ is independent of the choice of $\chi$ and hence
well-defined. Indeed,
Let $\chi' \in C_{c}^{}(X)$ be another such function,
and let $\rho \in C_{c}^{}(X)$ be a compactly supported function on $X$
with values in $[0,1]$
satisfying that $\rho(x)=1$ for all
$x\in \supp(\chi) \cup \supp(\chi')$. Then
in $ C_{0}^{}(X; W) \hattensor_{C_{0}^{}(X)}^{} C_{c}^{}(X; \mathbb{V})$,
$$
\overline{w} {\cdot}  \chi \otimes s
 -  \overline{w} {\cdot} \chi' \otimes s
=
\overline{w} {\cdot}\parens{\chi-\chi'} \otimes s
=
\overline{w}  {\cdot} \rho {\cdot} \parens{\chi-\chi'} \otimes s
=
\overline{w}  {\cdot} \rho \otimes \parens{\chi-\chi'} s
=
 0.
$$

Now we obtain that
$$
D_{W} \circ \varphi (s\otimes w)
=
D_{W} (\overline{w} \otimes s)
=
\overline{w} \otimes D(s)
=
\varphi\circ (D\hattensor 1) (s\otimes w)
$$
for $s\in C_{c}^{} (\mathbb{V})$ and $w \in W_{x_0}$.
This is because $\nabla^{W} \overline{w} = 0$ by its construction.

Compatibility with the action of $G$ is verified as follows;
\begin{eqnarray*}
\varphi (\gm (s\otimes w))(x)
= \Phi_{x_0 ; x}^{}(\rho[\gm](w)) \otimes \gm(s(\gm_{}^{-1}x))
 &=& \Phi_{x_0 ; x}^{} (\Phi_{x_0;\gm x_0})^{-1} \gm(w) \otimes \gm(s(\gm_{}^{-1}x))
 \\
 &=& \Phi_{\gm x_0 ; x}(\gm(b)) \otimes \gm(s(\gm_{}^{-1}x)),
\\
\gm (\varphi (s\otimes w))(x)
 = \gm((\Phi_{x_0;\gm_{}^{-1}x})(w) \otimes s(\gm_{}^{-1}x))
 &=& \Phi_{\gm x_0; x}(\gm(w)) \otimes \gm(s(\gm_{}^{-1}x)).
\end{eqnarray*}

Let us check that $\varphi$ induces an isomorphism.
For $s_{1}^{}{\otimes} w_{1}^{},\ s_{2}^{}{\otimes} w_{2}^{} \in
C_{c}^{} (X; \mathbb{V}) \hattensor_{\C}^{} W_{x_0}
$, it follows that
\begin{eqnarray*}
&&
\Big\langle
\varphi(s_{1}^{} {\otimes} w_{1}^{}),\;
\varphi(s_{2}^{} {\otimes} w_{2}^{})
\Big\rangle_{C_{0}^{}(X; W) \hattensor_{C_{0}^{}(X; X)}^{} L^2 (X; \mathbb{V})}^{}
\\
&=&
\Big\langle
s_{1}^{},\;
\angles{\overline{w_{1}^{}}{\cdot} \chi,
 \overline{w_{2}^{}}{\cdot} \chi}_{C_{0}^{}(X; W)} s_{2}^{}
\Big\rangle_{L^2 (X; \mathbb{V})}^{}
\\
&=&
\int_{X} \Big\langle s_{1}^{}(x),\;
\angles{(\Phi_{x_0 ; x}^{}w_{1}^{})\chi(x),
(\Phi_{x_0 ; x}^{}w_{2}^{})\chi(x) }_{W_{x}}^{} s_{2}^{}(x)\Big\rangle_{\bb{V}_{x}}^{}
\:\dif \mathrm{vol}(x)
\\
&=&
\int_{X} \angles{w_{1}^{},w_{2}^{}}_{W_{0}}^{} \chi(x)^2 \angles{s_{1}^{}(x),
s_{2}^{}(x)}_{\bb{V}_{x}}^{}
\:\dif \mathrm{vol}(x)
\\
&=&
\angles{w_{1}^{},w_{2}^{}}_{W_{0}}^{} \angles{s_{1}^{},s_{2}^{}}_{L^{2}(X; \bb{V})}^{}
\\
&=&
\angles{
s_{1}^{} {\otimes} w_{1}^{},\;
s_{2}^{} {\otimes} w_{2}^{}
}_{L^{2}(X; \bb{V})\hattensor W_{x_0}}^{},
\end{eqnarray*}
where $\chi \in C_{0}^{}(X)$ is a compactly supported function on $X$
satisfying that $\chi(x)=1$ for all $x\in \supp(s_{1}^{}) \cup \supp(s_{2}^{})$.
This implies that $\varphi$ is continuous and injective.

Moreover, choose arbitrary $F \in C_{c}^{}(X; W)$ and
$s \in C_{c}^{}(X; \bb{V})$.
Since $\Phi_{x_0 ; x}^{-1}$ provides a trivialization of
$W \simeq X \times W_{x_{0}}^{}$, we have an isomorphism
$C_{c}^{}(X; W) \simeq C_{c}^{}(X) \hattensor_{\C}^{} W_{x_{0}}^{}$.
Remark that, however, this is not a $G$-equivariant isomorphism,
just as pre-Hilbert 
$\parens{ C_{0}^{}(X;B) \cong C_{0}^{}(X)\hattensor B }$-modules.
 Then there exist countable subsets $\{ f_{1}^{}, f_{2}^{}, \ldots \} \subset C_{c}^{}(X)$ and
$\{ w_{1}^{}, w_{2}^{}, \ldots \} \subset W_{x_0}^{}$
satisfying that $\sum_{j\in \N} f_{j}^{} \overline{ w_{j}^{} }= F$ in $C_{0}^{}(X; W) $.
Now it follows that
\begin{eqnarray*}
\varphi\parens{
\sum_{j\in \N} f_{j}^{} s \otimes w_{j}^{}
}
=
\sum_{j\in \N} \parens{ \overline{w_{j}^{}} {\cdot} \chi \otimes f_{j}^{} s }
=
\sum_{j\in \N} \parens{ \overline{w_{j}^{}}{\cdot} \chi f_{j}^{} \otimes s }
=
\parens{ \sum_{j\in \N} \overline{w_{j}^{}} f_{j}^{} }{\cdot} \chi \otimes s
=
F \otimes s,
\end{eqnarray*}
where $\chi \in C_{0}^{}(X)$ is a compactly supported function on $X$
satisfying that $\chi(x)=1$ for all $x \in \supp(F) \cup \supp(s)$.
This implies that the image of $\varphi$ is dense in
$C_{0}^{}(X;W) \hattensor L^{2}(X;\bb{V})$.
Therefore $\varphi$ induces an isomorphism.
 \end{proof}
\begin{proof}
[Proof of the Proposition \ref{MuW0}]
Due to the previous lemma, 
it follows that
\begin{eqnarray*}
\mathrm{ind}_{G}^{}\parens{[W]\hattensor [D]}
&=&
\mathrm{ind}_{G}^{} ([D] \hattensor [W]_{\mathrm{rpn}})
\\
&=&
[c] \hattensor j^{G}\parens{
[D] \hattensor [W]_{\mathrm{rpn}}
}
\\
&=&
[c] \hattensor \left(j^{G}[D]\right) \hattensor
\parens{j^{G} [W]_{\mathrm{rpn}} }
\\
&=&
(\mathrm{ind}_{G}^{}[D]) \hattensor \parens{j^{G} [W]_{\mathrm{rpn}} }.
\end{eqnarray*}
Thus the assumption $\mathrm{ind}_{G}^{} [D] =0$ implies
$\mathrm{ind}_{G}^{}\parens{[W]\hattensor [D]}=0$
 \end{proof}

\subsection{Proof of Theorem \ref{ThmAlmostFlatIndex}}
\newcommand{\ZZZZ}
{\mathrm{ind}_{G}^{} \parens{\brackets{\prod E_{}^{k}}\hattensor [D]}}
\newcommand{\WWWW}
{\mathrm{ind}_{G}^{} \parens{[W]\hattensor[D]}}
\newcommand{\XXXX}[1]
{\mathrm{ind}_{G}^{} \parens{ [E_{}^{#1}] \hattensor [D]}}
\begin{proof}
[Proof of Theorem \ref{ThmAlmostFlatIndex}]
As Remark \ref{RemReductionOnDirac},
we may assume that $D$ is a Dirac type operator.
Assume that $\mathrm{ind}_{G}^{}[D] = 0$
and we assume the converse.
that is,
for each $k \in \N$ there exits a Hilbert $B_{k}^{}$-module $G$-bundle $E_{}^{k}$ over $X$
whose curvature norm is less than $\frac{1}{k}$
satisfying that
$$
\mathrm{ind}_{G}^{}([E_{}^{k}]\hattensor [D]) \neq 0 
\quad \in K_{0}(C^{*}(G) \hattensor B_{k}^{}).
$$

To begin with, we have an exact sequence;
$$
0 \rightarrow
\dirsum{ B_{k}^{}}
\xrightarrow{\;\;\; \iota\;\;\;}
\prod B_{k}^{}
\xrightarrow{\;\;\; \pi \;\;\;}
\calq B_{k}^{}
\rightarrow 0,
$$
where $\iota$ and $\pi$ are natural inclusion and projection.
We also have the following exact sequence
\cite[Theorem T.6.26]{We93};
$$
0 \rightarrow
C_{\Max}^{*}(G)\hattensor_{\Max}^{} \parens{\dirsum{ B_{k}^{}}}
\xrightarrow{\;\;\; 1 \hattensor \iota\;\;\;}
C_{\Max}^{*}(G)\hattensor_{\Max}^{} \parens{\prod B_{k}^{}}
\xrightarrow{\;\;\;1 \hattensor \pi \;\;\;}
C_{\Max}^{*}(G)\hattensor_{\Max}^{} \parens{\calq B_{k}^{}}
\rightarrow 0.
$$
We have the exact sequence of $K$-groups
$$
K_{0}\parens{ C_{\Max}^{*}(G)\hattensor_{\Max}^{} \parens{\dirsum{ B_{k}^{}}} }
\to
K_{0}\parens{ C_{\Max}^{*}(G)\hattensor_{\Max}^{} \parens{\prod B_{k}^{}} }
\to
K_{0}\parens{ C_{\Max}^{*}(G)\hattensor_{\Max}^{} \parens{\calq B_{k}^{}} }.
$$
If all of $B_{k}^{}$ are commutative, then
$\calq B_{k}^{}$ is also commutative and hence nuclear. 
In that case, we also have the same exact sequences
in which $ C_{\Max}^{*}(G)$ and $\hattensor_{\Max}^{}$ are replaced by
$ C_{\red}^{*}(G)$ and $\hattensor_{\min}^{}$ respectively.

Let us start with $\ZZZZ
\in K_{0}\parens{C^{*}(G)\hattensor \parens{\prod B_{k}^{}}}$.
\\
Due to the flatness of $W$ (Proposition \ref{FlatnessOfW}) and
Proposition \ref{MuW0} and \ref{PiMu}, we have
$$(1 \hattensor \pi)_{*}\ZZZZ = \WWWW = 0.$$
It follows from the exactness that
there exists $\zeta \in
K_{0}\parens{C^{*}(G)\hattensor \parens{\dirsum{ B_{k}^{}}}}$
such that $$(1 \hattensor \iota)_{*}(\zeta) = \ZZZZ.$$
\begin{lem}
$A \hattensor \parens{ \bigoplus_{k\in \N} B_{k}^{} }$
is naturally isomorphic to
$\bigoplus_{k \in \N} \parens{ A \hattensor B_{k}^{} }
= {\displaystyle{\varinjlim_{n}
}} \bigoplus_{k=1}^{n} \parens{ A \hattensor B_{k}^{} }$.
\end{lem}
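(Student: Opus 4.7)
The plan is to realize the $c_0$-direct sum $\bigoplus_{k\in\N} B_k^{}$ as an inductive limit of finite $C^{*}$-direct sums and to invoke continuity of the $C^{*}$-tensor product along such limits. Concretely, I will set $J_n := \{(b_k) \in \bigoplus_{k\in\N} B_k^{} : b_k = 0 \text{ for } k > n\}$, which is canonically isomorphic to the finite $C^{*}$-direct sum $\bigoplus_{k=1}^n B_k^{}$. The $J_n$ form an increasing chain of $C^{*}$-subalgebras whose union is norm-dense in $\bigoplus_{k\in\N} B_k^{}$ by the very definition of the $c_0$-direct sum, so $\bigoplus_{k\in\N} B_k^{} = \varinjlim_n J_n$ as $C^{*}$-algebras.

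At the finite stage the assertion is elementary: for any $C^{*}$-algebras $C_1, \ldots, C_n$, the inclusions $C_i \hookrightarrow C_1 \oplus \cdots \oplus C_n$ assemble into a natural $*$-isomorphism $(A \hattensor C_1) \oplus \cdots \oplus (A \hattensor C_n) \xrightarrow{\sim} A \hattensor (C_1 \oplus \cdots \oplus C_n)$, valid for both $\hattensor_{\min}$ and $\hattensor_{\Max}$; for the spatial tensor product this is seen via concrete faithful representations, while for the maximal tensor product it follows from the universal property together with the fact that the images of the summands $A \hattensor C_i$ pairwise annihilate. Applying this gives $A \hattensor J_n \cong \bigoplus_{k=1}^n (A \hattensor B_k^{})$ compatibly in $n$.

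The main (and only genuinely non-trivial) step will be to pass to the inductive limit: one must know that both $\hattensor_{\Max}$ and $\hattensor_{\min}$ commute with inductive limits of $C^{*}$-algebras along injective connecting $*$-homomorphisms. This is a standard fact; it can be proved directly by observing that every element of the algebraic tensor product $A \odot \bigcup_n J_n$ lies in some $A \odot J_n$, and then checking that the minimal and maximal $C^{*}$-norms on $A \odot J_n$ agree with the norms induced from $A \odot \bigoplus_k B_k^{}$. Granting this continuity, I will conclude
\[
A \hattensor \parens{\bigoplus_{k\in\N} B_k^{}}
= A \hattensor \varinjlim_n J_n
= \varinjlim_n \bigl( A \hattensor J_n \bigr)
= \varinjlim_n \bigoplus_{k=1}^n (A \hattensor B_k^{})
= \bigoplus_{k\in\N} (A \hattensor B_k^{}),
\]
which is the required natural isomorphism. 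The isomorphism is visibly natural in $A$, and the explicit formula $a \otimes (b_k) \mapsto (a \otimes b_k)_k$ on elementary tensors with finitely supported $(b_k)$ determines it uniquely by density.
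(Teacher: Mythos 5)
Your proof is correct and follows essentially the same route as the paper's: realize $\bigoplus_{k\in\N} B_k^{}$ as $\varinjlim_n \bigoplus_{k=1}^n B_k^{}$, distribute $A \hattensor -$ over each finite stage, and pass to the limit via isometry of the inclusions together with density of their union. One small point of precision worth recording: for $\hattensor_{\Max}^{}$ the isometric embedding $A \hattensor_{\Max}^{} \bigl(\bigoplus_{k=1}^n B_k^{}\bigr) \hookrightarrow A \hattensor_{\Max}^{} \bigl(\bigoplus_{k\in\N} B_k^{}\bigr)$ uses that $\bigoplus_{k=1}^n B_k^{}$ is a closed two-sided ideal of the $c_0$-sum (exactness of the maximal tensor product, the Wegge-Olsen theorem the paper invokes just above this lemma), and not merely that the connecting $*$-homomorphisms are injective.
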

\begin{proof}
Let $C$ denote the direct product ${\displaystyle{\varinjlim_{n}
}} \bigoplus_{k=1}^{n} \parens{ A \hattensor B_{k}^{} }$.
Note that for the finite direct product,
we have the natural isomorphism 
$\bigoplus_{k=1}^{n} \parens{ A \hattensor B_{k}^{} } \cong
A \hattensor \parens{ \bigoplus_{k=1}^{n}  B_{k}^{} } $.
For each $n \in \N$, we have the following commutative diagram:
\newcommand{\AXtB}[1]
{ A \hattensor \parens{ \bigoplus_{k=1}^{#1} B_{k}^{} } }
\begin{align*}
\xymatrix{
\AXtB{n}
 \ar[r]^{\mathrm{id}_{A}^{} \hattensor \iota_{n}^{n+1}}
 \ar[rd]_{\mathrm{id}_{A}^{} \hattensor \iota_{n}^{}}
& \AXtB{n+1}
 \ar[d]^{\mathrm{id}_{A}^{} \hattensor \iota_{n+1}^{}}
\\
 & A \hattensor \parens{ \bigoplus_{k\in \N} B_{k}^{}}
}.
\end{align*}
Now by using the universal property of the direct limit,
we obtain a map $\phi$:
\begin{align*}
\xymatrix{
\AXtB{n} \ar[r] \ar[rd] 
&{\displaystyle{\varinjlim_{n}
}} A \hattensor \parens{ \bigoplus_{k=1}^{n} B_{k}^{} }
 \ar[d]^{\phi}
\\
 & A \hattensor \parens{ \bigoplus_{k\in \N} B_{k}^{}}
}.
\end{align*}
Since $\mathrm{id}_{A}^{} \hattensor \iota_{n}^{}$ are isometric and injective,
$\phi$ is isometric and injective on each sub-space $\AXtB{n} \subset {\displaystyle{\varinjlim_{n}
}} A \hattensor \parens{ \bigoplus_{k=1}^{n} B_{k}^{} }$.
Since the union of such sub-spaces is dense in ${\displaystyle{\varinjlim_{n}
}} A \hattensor \parens{ \bigoplus_{k=1}^{n} B_{k}^{} }$,
it follows that $\phi$ itself is isometric and injective.

As for the surjectivity of $\phi$, take any
 $a \hattensor \{ b_{k}^{} \} \in A \hattensor\parens{ \bigoplus_{k\in \N} B_{k}^{}}$.
For any $\ep>0$, there exists $n \in \N$ such that $\norm{b_{k}^{} }< \frac{\ep}{1+\norm{a}}$
for $k \geq n$.
Then replace $b_{k}^{}$ by $0$ for all $k \geq n$ to obtain an element
$\beta := \{ b_{1}^{}, b_{2}^{}, \ldots 0_{n}^{},  0_{n+1}^{},\ldots \} \in \bigoplus_{k\in \N} B_{k}^{}$.
Now we have that 
\begin{align*}
a \hattensor \beta 
=
(\mathrm{id}_{A}^{} \hattensor \iota_{n}^{}) ( a\hattensor \{ b_{1}^{}, b_{2}^{}, \ldots b_{n-1}^{} \})
=
\phi ( a\hattensor \{ b_{1}^{}, b_{2}^{}, \ldots b_{n-1}^{} \}) 
\in \Im(\phi)
\end{align*}
and $\norm{a\hattensor \{ b_{k}^{} \} - a \hattensor \beta} \leq
\norm{a} \norm{\{ b_{k}^{} \} - \beta} \leq \ep$. These imply that
$\Im(\phi)$ is dense in $A \hattensor \parens{ \bigoplus_{k\in \N} B_{k}^{} }$
and hence, $\phi$ is surjective since it has closed range.
\end{proof}
By this lemma, 
$C^{*}(G) \hattensor \parens{ \bigoplus B_{k}^{} }$
is naturally isomorphic to
$\bigoplus \parens{ C^{*}(G) \hattensor B_{k}^{} }$.
Besides, we have the natural isomorphism
$K_{0}\parens{\bigoplus \parens{ C^{*}(G) \hattensor B_{k}^{} } } \simeq
\bigoplus K_{0}\parens{ C^{*}(G) \hattensor B_{k}^{} }$
\cite[4.1.15 Proposition, 4.2.3 Remark]{HiRo00},
with the last $\bigoplus$ meaning the algebraic direct sum.
Thus we can consider the following diagram;
$$
\begin{CD}
K_{0}\parens{C^{*}(G) \hattensor \parens{ \bigoplus B_{k}^{} }}
@>{\iota_{*}}>>
K_{0}\parens{C^{*}(G)\otimes \parens{\prod B_{k}^{}}}
@>{(1 \otimes \pi)_{*}}>>
K_{0}\parens{C^{*}(G)\otimes \calq B_{k}^{}}
\\
@V{\{(1 \otimes p_k)_{*} \} }V{\cong}V
@V{\{(1 \otimes p_k)_{*} \} }VV
@.
\\
\bigoplus K_{0} \parens{ C^{*}(G) \hattensor B_{k}^{} }
@>>{\text{inclusion}}>
\prod K_{0}\parens{ C^{*}(G) \hattensor B_{k}^{} }
\end{CD}
$$
Since $p_k = \iota p_k$, this diagram commutes.
Note that both $\bigoplus$ and $\prod$ in the bottom row are
in the algebraic sense.
Again due to Proposition \ref{PiMu},
\begin{eqnarray*}
\braces{\XXXX{k}}_{k \in \N}
&=&
\{(1 \otimes p_k)_{*} \}\parens{\ZZZZ}
\\
&=&
\{(1 \otimes p_k)_{*} \}\parens{(1 \otimes \iota)_{*}(\zeta)}
\\
&=&
\{(1 \otimes p_k)_{*} \}(\zeta)
\quad \in \bigoplus K_{0} \parens{ C^{*}(G) \hattensor B_{k}^{} }.
\end{eqnarray*}
This implies that all of
$\XXXX{n} \in K_{0} \parens{ C^{*}(G) \hattensor B_{n}^{} }$ are equal to zero
except for finitely many $n \in \N$, which contradicts to our assumption.
\end{proof}
\subsection{On proof of Corollary \ref{CorAlmostFlatSgn}}
To prove Corollary \ref{CorAlmostFlatSgn}, we will combine 
Theorem \ref{ThmAlmostFlatIndex} with Theorem \ref{MainThm}.
Consider the same conditions as Theorem \ref{MainThm} on $X$, $Y$ and $G$
and assume additionally that $X$ and $Y$ are simply connected.
Let $\map{f} {Y} {X}$ be a $G$-equivariant orientation preserving homotopy invariant map.
Assume that
for each $k \in \N$ there exits a Hilbert $B_{k}^{}$-module $G$-bundle $E_{}^{k}$ over $X$
whose curvature norm is less than $\frac{1}{k}$
satisfying that
$$
\mathrm{ind}_{G}^{}([E_{}^{k}]\hattensor [\partial_{X}^{}]) \neq 
\mathrm{ind}_{G}^{}([f_{}^{*}E_{}^{k}]\hattensor [\partial_{Y}^{}])  
\quad \in K_{0}(C^{*}(G) \hattensor B_{k}^{}).
$$
as in the proof of Theorem \ref{ThmAlmostFlatIndex}.
Consider a $G$-manifold $Z := X \sqcup (-Y)$, 
the disjoint union of $X$ and orientation reversed $Y$
and the signature operator $\partial_{Z}^{}$ on it.
Although $Z$ is not connected, however,
we may apply Theorem \ref{ThmAlmostFlatIndex} to $\partial_{Z}^{}$,
after replacing some argument in the proof as follows.
Consider a family of Hilbert $B_{k}^{}$-module bundles
$\{ E_{}^{k} \sqcup f_{}^{*}E_{}^{k} \}$ over $Z$
and obtain a flat bundle $W \sqcup f_{}^{*}W$ as in subsection \ref{SectionAssembling}.
In order to obtain a global section $\overline{w}$ as in
(\ref{EqGlobalSectionOnW}) in the proof of Lemma \ref{LemIndTwistedByW},
we have used the connectedness of the base space.
In this case, construct a section $\overline{w} \colon X \to W$ on $X$ in the same way
 and pull back it on $Y$ by $f$ to obtain a global section on $Z$.
The other parts are the same as above.
\section*{Acknowledgements}
The author is supported by Natural Science Foundation of China (NSFC) 
Grant Number 11771143.


\begin{thebibliography}{ABCD}
\bibitem[BCH]{Ba-Co-Hi76}
{P. Baum, A. Connes and N. Higson},
{\em Classifying space for proper actions and
$K$-theory of group $C^{*}$-algebras},
{Contemp. Math.}
\textbf{167}(1994),
241-291.

\bibitem[CM]{Co-Mo82}
{A. Connes and H. Moscovici},
{\em The $L^{2}$-Index theorem for homogeneous spaces
of Lie groups},
{Annals of Math.}
\textbf{115-2}(1982),
291--330.

\bibitem[F]{Fu16}
{Y. Fukumoto},
{\em On the Strong Novikov Conjecture of Locally Compact Groups for Low Degree Cohomology Classes},
{ arXiv:1604.00464}

\bibitem[HaSc]{HaSc08}
{B. Hanke and T. Schick},
{\em The strong Novikov conjecture for low degree cohomology},
{Geom. Dedicata.}
\textbf{135}(2008), 119-127.

\bibitem[Ha12]{Ha12}
{B. Hanke},
{\em Positive scalar curvature, K-area and essentialness},
{Global differential geometry,
Springer Proceedings in Mathematics}
\textbf{17}(2012), 275-302.

\bibitem[HR]{HiRo00}
{N. Higson and J. Roe}, 
{\em Analytic $K$-Homology},
{Oxford University Press}
(2000). 

\bibitem[HiSk]{HilSk92}
{M. Hilsum and G. Skandalis},
{\em Invariance par homotopie de la signature
\`{a} coefficients dans un fibre presqu\'{e} plat},
{Journal f\"{u}r die reine und
angewandte Mathematik},
\textbf{423}(1992), 73--99.

\bibitem[KM]{KamMil85}
{J. Kaminker and J.G. Miller},
{\em Homotopy invariance of the analytic signature operators over $C^{*}$-algebras},
{J. Operator Theory}
\textbf{14}(1985),
113--127.

\bibitem[Kas75]{Kas75}
{G.G. Kasparov},
{\em Topological invariants of elliptic operators I},
{Miomology, Izv. Akad. Nauk. S.S.S.R. Ser.
Mat.},
\textbf{39}(1975), 796--838;
{English translated in Math. U.S.S.R. Izv.},
\textbf{9}(1975), 751--792.

\bibitem[Kas88]{Kas88}
{G. Kasparov},
{\em Equivariant $KK$-theory and the Novikov conjecture},
{Invent. Math.}
\textbf{91}(1988), 147-201.


\bibitem[Kas16]{Kas16}
{G.G. Kasparov},
{\em Elliptic and transversally elliptic index theory
from the viewpoint of $KK$-theory},
{J. Noncommut. Geom.},
\textbf{10}(2016), 1303--1378.

\bibitem[Ma]{Ma03}
{V. Mathai},
{\em The Novikov conjecture for low degree cohomology classes},
{Geom. Dedicata.}
\textbf{99}(2003), 1-15.

\bibitem[RW]{RoWe90}
{J. Rosenberg and S. WeinbergerMoscovici},
{\em An Equivariant Novikov Conjecture},
{$K$-Theory}
\textbf{4}(1990),
29--53.

\bibitem[We]{We93}
{N. E. Wegge-Olsen},
{\em $K$-Theory and $C^{*}$-Algebras},
{Oxford University Press}
(1993).
\end{thebibliography}
\end{document}